  \def\clap#1{\hbox to 0pt{\hss#1\hss}}
  \def\mathclap{\mathpalette\mathclapinternal}
  \def\mathclapinternal#1#2{%
    \clap{$\mathsurround=0pt#1{#2}$}}
\providecommand{\mat}[1]{\boldsymbol{#1}}%
\renewcommand{\vec}[1]{\mathbf{#1}}
\providecommand{\mC}{\ensuremath{\mat{C}}}
\providecommand{\mF}{\ensuremath{\mat{F}}}
\providecommand{\mQ}{\ensuremath{\mat{Q}}}
\providecommand{\mR}{\ensuremath{\mat{R}}}
\providecommand{\mU}{\ensuremath{\mat{U}}}
\providecommand{\mV}{\ensuremath{\mat{V}}}
\providecommand{\mSigma}{\ensuremath{\mat{\Sigma}}}
\providecommand{\vf}{\ensuremath{\vec{f}}}
\providecommand{\vs}{\ensuremath{\vec{s}}}
\providecommand{\vu}{\ensuremath{\vec{u}}}
\providecommand{\vv}{\ensuremath{\vec{v}}}
\newcommand{\sE}{\mathcal{E}}
\newcommand{\sI}{\mathcal{I}}
\newcommand{\sS}{\mathcal{S}}
\newcommand{\sX}{\mathcal{X}}
\newcommand{\Exp}[1]{\mathbb{E}\left[#1\right]}
\newcommand{\Cov}[2]{\operatorname{Cov}\left[#1,\,#2\right]}
\newcommand{\Var}[1]{\operatorname{Var}\left[#1\right]}
\newcommand{\bmat}[1]{\begin{bmatrix}#1\end{bmatrix}}
\newcommand{\Map}{\textit{map}\xspace}
\newcommand{\Reduce}{\textit{reduce}\xspace}
\newcommand{\Shuffle}{\textit{shuffle}\xspace}
\newtheorem{assump}{Assumption}
\newtheorem{prop}{Proposition}
\title{Model Reduction With MapReduce-enabled Tall and Skinny
  Singular Value Decomposition}
\author{ Paul G.~Constantine\thanks{Colorado School of Mines Applied
    Mathematics and Statistics, Golden, Colorado 80401 ({\tt
      paul.constantine@mines.edu}).}  \and David
  F.~Gleich\thanks{Purdue CS, West Lafayette, Indiana 47907 ({\tt
      dgleich@purdue.edu}).}  \and Yangyang Hou\thanks{Purdue CS, West
    Lafayette, Indiana 47907 ({\tt hou13@purdue.edu}).}  \and \newline Jeremy
  Templeton\thanks{Sandia National Laboratories, Livermore, California
    ({\tt jatempl@sandia.gov}). Sandia National Laboratories is a
    multi-program laboratory managed and operated by Sandia
    Corporation, a wholly owned subsidiary of Lockheed Martin
    Corporation, for the U.S. Department of Energy's National Nuclear
    Security Administration under contract DE-AC04-94AL85000.}  }
\begin{document}

\maketitle

\begin{abstract}
We present a method for computing reduced-order models of
parameterized partial differential equation solutions. The key
analytical tool is the singular value expansion of the parameterized
solution, which we approximate with a singular value decomposition of
a parameter snapshot matrix. To evaluate the reduced-order model at a
new parameter, we interpolate a subset of the right singular vectors
to generate the reduced-order model's coefficients. We employ a novel
method to select this subset that uses the parameter gradient of the
right singular vectors to split the terms in the expansion yielding a
mean prediction and a prediction covariance---similar to a Gaussian
process approximation. The covariance serves as a confidence measure
for the reduce order model.
 
We demonstrate the efficacy of the reduced-order model using a
parameter study of heat transfer in random media. The high-fidelity
simulations produce more than 4TB of data; we compute the singular
value decomposition and evaluate the reduced-order model using
scalable MapReduce/Hadoop implementations.  We compare the accuracy of
our method with a scalar response surface on a set of temperature
profile measurements and find that our model better captures sharp,
local features in the parameter space.
\end{abstract}

\begin{keywords} 
model reduction, simulation informatics, MapReduce, Hadoop, tall and
skinny SVD
\end{keywords}

\pagestyle{myheadings}
\thispagestyle{plain}
\markboth{P.~G. CONSTANTINE, D.~F. GLEICH, Y. HOU, AND J. TEMPLETON}{MAPREDUCE-ENABLED
  MODEL REDUCTION}

\section{Introduction \& motivation}
\label{sec:intro}

High-fidelity simulations of partial differential equations are
typically too expensive for design optimization and uncertainty
quantification, where many independent runs are necessary. Cheaper
reduced-order models (ROMs) that approximate the map from simulation
inputs to quantities of interest may replace expensive simulations to
enable such parameter studies. These ROMs are constructed with a
relatively small set of high-fidelity runs chosen to cover a range of
input parameter values. Each evaluation of the ROM is a linear
combination of basis functions derived from the outputs of the
high-fidelity runs; ROM constructions differ in their choice of basis
functions and method for computing the coefficients of the linear
combination. Projection-based methods project the residual of the
governing equations (i.e., a Galerkin projection) to create a
relatively small system of equations for the coefficients; see the
recent preprint~\cite{Benner2013} for a survey of projection-based
techniques. Alternatively, one may derive a closely related
optimization problem to compute the
coefficients~\cite{Bui2008,carlberg2013gnat,Constantine2012}. These
two formulations can provide a measure of confidence along with the
ROM. However, they are often difficult to implement in existing
solvers since they need access to the equation's operators or
residual.

To bypass the implementation difficulties, one may use response
surfaces---e.g., collocation~\cite{Babuska2007,Xiu2005} or Gaussian
process regression~\cite{Rasmussen2006}---which are essentially
interpolation methods applied to the high-fidelity outputs. They do
not need access to the differential operators or residuals and are
therefore relatively easy to implement. However, measures of confidence
are more difficult to formulate and compute. Several works have
explored using interpolation instead of projection or optimization to
compute the ROM coefficients. This approach is justified when the
governing equations are unknown or only a set of PDE solutions are
available~\cite{ly2001modeling}. It is also useful when nonlinearities
in the governing equations prohibit a theoretically sound Galerkin
projection~\cite{audouze2009reduced,audouze2013nonintrusive}. For
these reasons, it has been applied to several problems in aerospace
engineering~\cite{Bui2003,lee2011reduced,goss2008inlet,ostrowski2005estimation}.

In this paper, we extend these ideas by equipping an
interpolation-based ROM with a novel parameter-dependent confidence
measure. We first view the ROM from the perspective of the singular
value expansion (SVE) of the parameterized PDE solution, where the
left singular functions depend on space and time, and the right
singular functions depend on the input parameters.  We approximate
these functions with the components of a singular value decomposition
(SVD) of a tall, dense matrix of solutions computed at a set of input
parameter values, i.e., parameter \emph{snapshots}. Many reduced basis
methods use the left singular vectors of the snapshot matrix for the
ROM basis, where each snapshot represents a spatially varying
solution.  In contrast, each column of our snapshot matrix contains
the full spatio-temporal solution for a given parameter.  We have
observed interesting behavior in the right singular vectors in several
applications: as the index of the singular vector increases, its
components---when viewed as evaluations of a parameter-dependent
function---become more oscillatory. This is consistent with the common
use of the phrase ``higher order modes'' to describe the singular
vectors with large indices. More importantly, the rate that the
singular vectors become oscillatory depends on the parameter. In
particular, the singular vectors become oscillatory faster in regions
of the parameter space where the PDE solution changes rapidly with
small parameter perturbations.

We exploit this observation to devise a heuristic for choosing the
subset of the left singular vectors comprising ROM; instead of
selecting all left singular vectors whose corresponding singular value
is above a chosen threshold, we examine the gradients of the right
singular vectors at the parameter value where we wish to evaluate the
ROM.  After some critical index, the right singular vectors are too
irregular to safely interpolate.  For each ROM evaluation, the right
singular vectors are divided into two categories: (i) those that are
sufficiently smooth for interpolation, and (ii) those that oscillate
too rapidly. The first category identifies the left singular vectors
used in the ROM, and the coefficients are computed with interpolation.
The remaining left singular vectors are used to compute a measure of
confidence similar to the prediction variance in Gaussian process
regression. The number of left singular vectors in each category may
be different for different ROM evaluations depending on the
irregularity of the right singular vectors at the interpolation point;
we explore this in the numerical examples.  The heuristics we employ
to categorize the right singular vectors are based on the work of
Hansen in the context of ill-posed inverse problems~\cite{Hansen2010}. We 
describe the ROM methodology in Section \ref{sec:rom}. 

In Section \ref{sec:exp}, we demonstrate the ROM and its confidence
measure with a parameter study of heat transfer in random media. A
brick is heated on one side, and we measure how much heat transfers to
the opposite side given the parameterized thermal conductivity of the
material. The high-fidelity simulations use Sandia National
Laboratories' finite element production code Aria~\cite{sand07-2734}
on its capacity cluster with a mesh containing 4.2M elements. The
study uses 8192 simulations, which produce approximately 4 terabytes
of data. We study the effectiveness of the reduced-order model and
compare its predictions with a response surface on two relevant scalar
quantities of interest computed from the full temperature
distribution.

Given the data-intensive computing requirements of this and similar
applications, we have chosen to implement the ROM in the popular
Hadoop distribution~\cite{Hadoop2012-0202-cdh3} of
MapReduce~\cite{Dean2004-MapReduce}. By expressing each step of 
the ROM construction in the MapReduce framework, we take advantage
of its built-in parallelism and fault tolerance. MapReduce's scalability
enables us to compute the SVD of a snapshot matrix with 64 columns and
roughly five billion rows---approximately 2.3 terabytes of data---without
the custom hard disk I/O that would be necessary to use standard
parallel routines such as Scalapack~\cite{blackford1997scalapack}. 
The algorithm we use for the SVD in MapReduce is based on the 
communication-avoiding QR decomposition~\cite{Demmel-2012-CAQR} as
described in our previous work~\cite{Constantine2011}. We present
the implementation of the ROM in Section \ref{sec:implement}. 

\section{A reduced-order modeling approach}
\label{sec:rom}

Let $f=f(x,s)$ be the solution of a partial differential equation
(PDE), where $x\in\sX\subset\mathbb{R}^4$ are the space and time
coordinates (three spatial dimensions and a temporal dimension), and
$s\in\sS\subset\mathbb{R}$ is an input parameter.  We restrict our
attention to models with a single scalar parameter to keep the
presentation simple. The ROM employs both interpolation and
approximations of derivatives in the parameter space $\sS$.  While
these operations are possible with more than one parameter, challenges
arise in higher dimensions---e.g., the choice of interpolation nodes
and the accuracy of derivatives---that we will avoid. If one is
willing to address the difficulties of interpolation and approximating
derivatives in multiple dimensions, then our approach can be extended.

The interpretation of $f$ as the solution of a PDE is important for
two reasons.  First, the solution of many PDE models can be shown to
be a smooth function of both the space/time variables and the
parameter, and we restrict our attention to sufficiently smooth
solutions. Second, computational tools will compute $f$ at all values
of $x$ given an input $s$. In other words, we cannot evaluate $f$ at a
specific $x$ without evaluating it for every $x$. We will not discuss
refinement of the ROM---i.e., which parameter values to run a new set
of high-fidelity runs to best improve an initial ROM---but our
heuristic offers a natural criterion for such a selection.  We assume
that computing $f(x,s)$ for a particular $s$ is computationally
expensive.  We want to use the outputs from a few expensive
computations at a chosen set of input parameters to approximate
$f(x,s)$ at some other $s$ in a manner that is less computationally
expensive than solving the differential equation.

We assume that $f$ is continuous and square-integrable ($\int_\sX
\int_\sS f^2 \,ds\,dx \;<\;\infty$). In practice, the techniques we
use will perform better if $f$ is smooth, e.g., admits continuous
derivatives up to some order.
Since $f$ is continuous, it admits a uniformly convergent series
representation known as the \emph{singular value expansion} (SVE);
see~\cite{Hansen1988} for more details on the SVE:
\begin{equation}
\label{eq:sve}
f(x,s) \;=\; \sum_{k=1}^\infty \mu_k\,u_k(x)\,v_k(s).
\end{equation}
The singular functions $u_k(x)$ and $v_k(s)$ are continuous and
orthonormal,
\begin{equation}
\int_{\sX} u_{k_1}\,u_{k_2}\,dx \;=\; \int_{\sS} v_{k_1}\,v_{k_2}\,ds
\;=\; \delta_{k_1,k_2}.
\end{equation}
The singular values are positive and ordered in decreasing order,
\begin{equation}
\mu_1 \;\geq\; \mu_2 \;\geq\; \cdots \;\geq\; 0.
\end{equation}
Hansen discusses methods for approximating the factors of the SVE
using the singular value decomposition. We will employ his
construction~\cite[Section 5]{Hansen1988}, which ultimately uses
point evaluations of the function $f$ to construct a matrix suited
for the SVD.

Let $x_1,\dots,x_M$ with $x_i\in\sX$ be the points of a discretization
of the spatio-temporal domain. A run of the PDE solver produces an
approximate solution at these points in the domain for a given input
$s$. We assume that the spatio-temporal discretization is sufficient
to produce an accurate approximation of the PDE solution for all
values of $s$; in practice, such an assumption should be verified. Let
$s_1,\dots,s_N$ with $s_j\in\sS$ be a set of input parameters where
the approximate PDE solution will be computed; we call these the
\emph{training} runs. 
The number $N$ is the budget of simulations, and we expect that $N\ll
M$ for most cases. In other words, we assume that the number of nodes
in the spatio-temporal discretization is much larger than the budget
of simulations.

From these approximate solutions, we construct the tall, dense matrix
\begin{equation}
\label{eq:fmat}
\mF\;=\;
\bmat{
f(x_1,s_1) & \cdots & f(x_1,s_N) \\
\vdots & \ddots & \vdots \\
f(x_M,s_1) & \cdots & f(x_M,s_N)
}.
\end{equation}
Next we compute the thin SVD,
\begin{equation}
\mF \;=\; \mU\mSigma\mV^T,\qquad
\mSigma\;=\;\mathrm{diag}\,(\sigma_1,\dots,\sigma_N),
\end{equation}
where, following~\cite{Hansen1988}, we treat $\sigma_k\approx\mu_k$
and
\begin{equation}
\label{eq:singvecs}
\mU 
\;\approx\;
\bmat{
u_1(x_1) & \cdots & u_N(x_1) \\
\vdots & \ddots & \vdots \\
u_1(x_M) & \cdots & u_N(x_M)
},\qquad
\mV
\;\approx\;
\bmat{
v_1(s_1) & \cdots & v_N(s_1) \\
\vdots & \ddots & \vdots \\
v_1(s_N) & \cdots & v_N(s_N)
}
\end{equation}
In other words, we treat the entries of the left and right singular
vectors as evaluations of the singular functions at the points $x_i$
and $s_j$, respectively.

\subsection{Oscillations in the singular vectors}
\label{sec:oscillations}
We will leverage the work of Hansen \cite{Hansen2006,Hansen2010} on
computational methods for linear, ill-posed inverse problems to
develop the heuristics for the ROM.  He observed that, for a broad
class of integral equation kernels found in practice, the singular
functions become more oscillatory (i.e., cross zero more frequently)
as the index $k$ increases. Rigorous proofs of this observation are
available for some special cases. However, it is easy to construct
kernels whose singular functions do not behave this way. For example,
take a kernel whose singular functions become more oscillatory with
increasing $k$ and shuffle the singular functions. Such
counterexamples offer evidence that a general statement is difficult
to formulate.

We have observed similar phenomena for functions $f=f(x,s)$ coming
from parameterized partial differential equations. This observation is
corroborated by many studies in coherent structures based on the
closely related proper orthogonal
decomposition~\cite{lumley2012turbulence}.  Additionally, we have
observed that these oscillations may not increase uniformly over
the parameter domain. In particular, the rate of 
increasing oscillations may be greater in regions of the parameter
space where $f$ has large parameter gradients; we provide two 
illustrative examples below. 

The components of the singular vectors inherit the observed
oscillating behavior of the singular functions. In particular, the
oscillations increase as the index $k$ increases, and they increase
more rapidly in regions corresponding to large differences in the
elements of the data matrix $\mF$.  These rapid oscillations manifest
as an increase with $k$ in the magnitude of the difference between
entries of the singular vectors corresponding to evaluations of the
singular functions that are nearby in parameter space. For example,
the difference between $\mV_{j,k+1}\approx v_{k+1}(s_j)$ and
$\mV_{j',k+1}\approx v_{k+1}(s_{j'})$ with $s_j$ neighboring $s_{j'}$
will be greater than the difference between $\mV_{j,k}\approx
v_{k}(s_j)$ and $\mV_{j',k}\approx v_{k}(s_{j'})$. (Note that when the
model contains more than one parameter, the notion of
\emph{neighboring} becomes more complicated.) However, since there is
finite resolution in the parameter space, there is typically some $k$
after which the discretization is insufficient to represent the
oscillations, and this pattern breaks down.  The phenomenon is similar
to approximating a sequence of sine waves with increasing frequency
using the same grid.
We formalize this notion in the following assumption.
\begin{assump}
\label{bigass}
Let $\sS = [s_1,s_N]$ be a closed interval with a discretization $s_i
= s_1+(i-1)\Delta s$, where $\Delta s = (s_N-s_1)/(N-1)$, and let
$\mV$ be defined as in \eqref{eq:singvecs}. There is an $R=R_i\leq N$
such that the sequence of difference magnitudes between neighboring
right singular vector entries will increase for $k$ from 1 to $R$,
i.e.,
\begin{equation}
|\mV_{i+1,k+1} - \mV_{i,k+1}| \;>\; |\mV_{i+1,k} - \mV_{i,k}|.
\end{equation}
For $k>R$, the relationship becomes unpredictable due to the finite
resolution in the parameter space $\sS$.
\end{assump}

Note our restriction to a single parameter and a uniform
discretization of the parameter space. These restrictions can be
relaxed with appropriate discretizations of a multivariate space.  We
will use Assumption \ref{bigass} to justify a heuristic that
distinguishes between singular functions that can be resolved and
those that cannot given the discretization.

Next we give two concrete examples of the observed behavior in the
right singular vectors. The first is a steady state
advection-diffusion type boundary value problem,
\begin{equation}
\label{eq:ad}
\frac{df}{dx} + s\frac{d^2f}{dx^2} = -1, 
\qquad x\in[-10,10],\quad s\in[2,20],
\end{equation}
with homogeneous boundary conditions. The solution is given by
\begin{equation}
f(x,s) \;=\;
\frac{\exp(20s^{-1})(x-10)+20\exp(s^{-1}(10-x))-x-10}{1-\exp(20s^{-1})}
\end{equation}
The parameter $s$ represents the ratio of diffusion to advection.
Figure \ref{fig:svecs0} shows the results the SVD approximation to the
SVE factors for a overresolved model (1999 points in the
discretization of the parameter space) and an underresolved model (15
points in the parameter space). Observe how the first seven singular
functions scaled by their respective singular values become
oscillatory at different rates in different regions of the parameter
space. In particular, the singular functions oscillate more rapidly in
regions of the parameter space corresponding to more advection. Also
note how the underresolved approximations deviate from the
overresolved approximations in regions of high oscillations.

\begin{figure}[ht]
\centering
\subfloat[Surface]{
\includegraphics[width=0.3\linewidth]{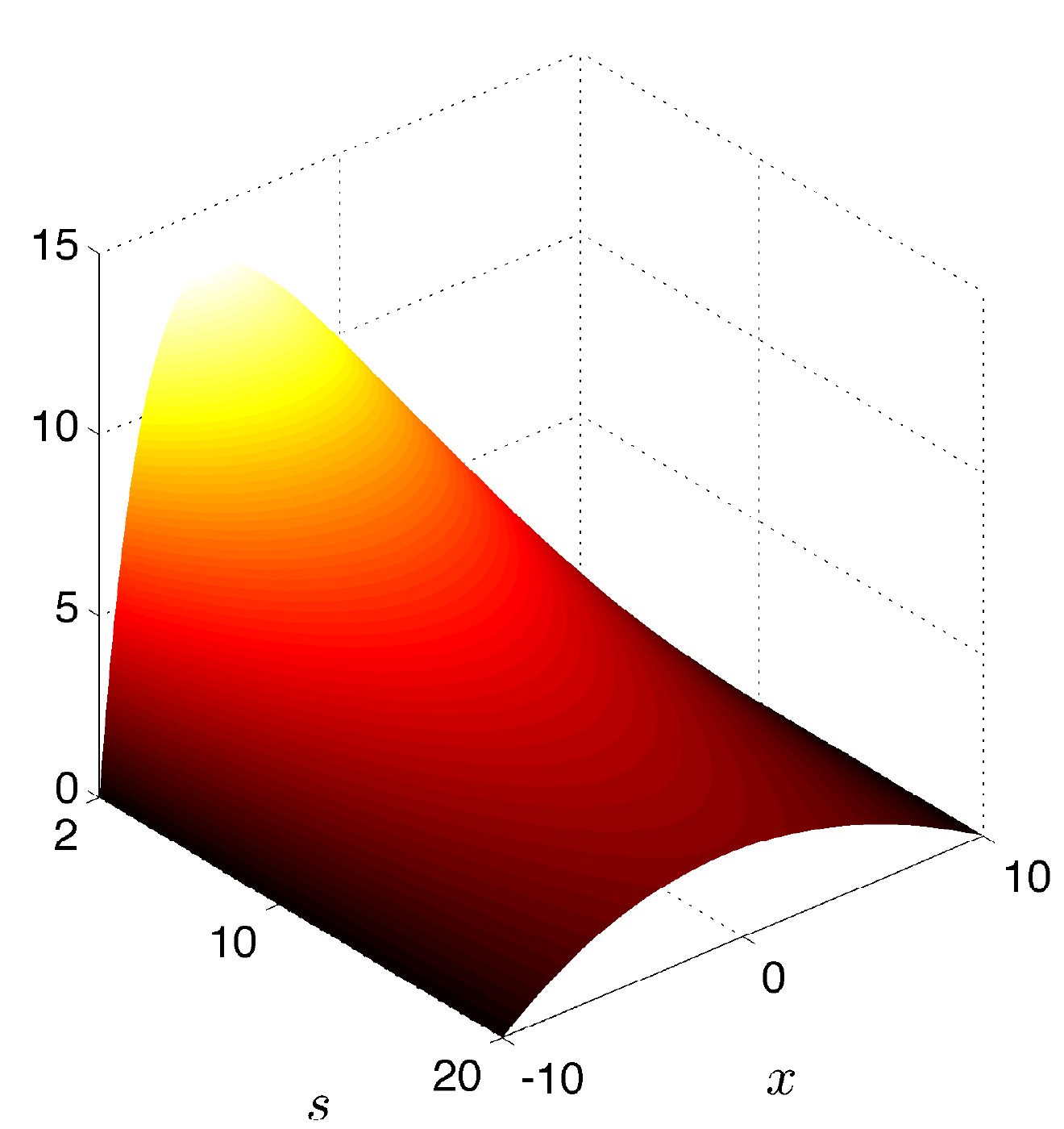}
}\;
\subfloat[Singular values]{
\includegraphics[width=0.3\linewidth]{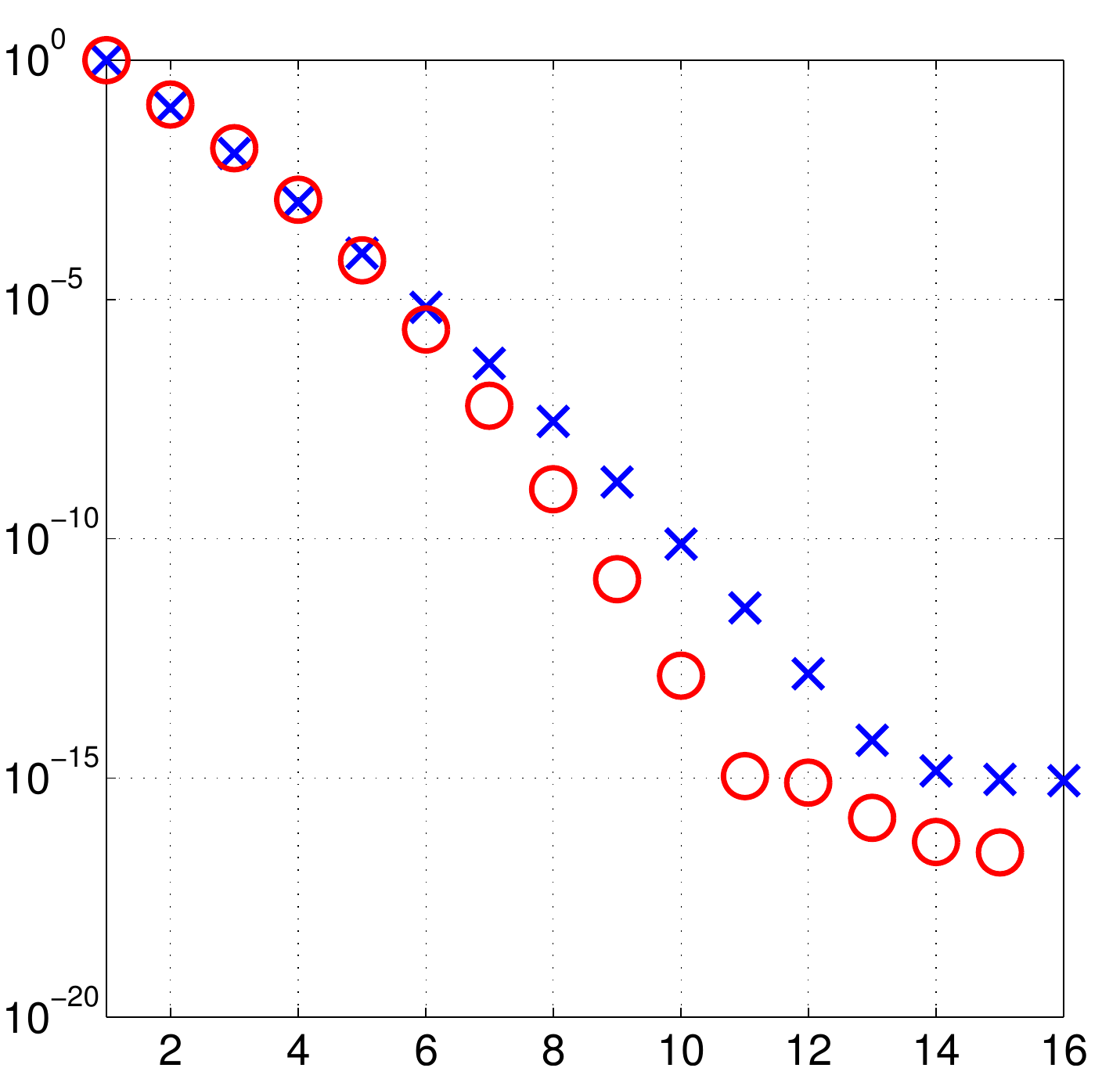}
}\;
\subfloat[$\sigma_1 v_1(s)$]{
\includegraphics[width=0.3\linewidth]{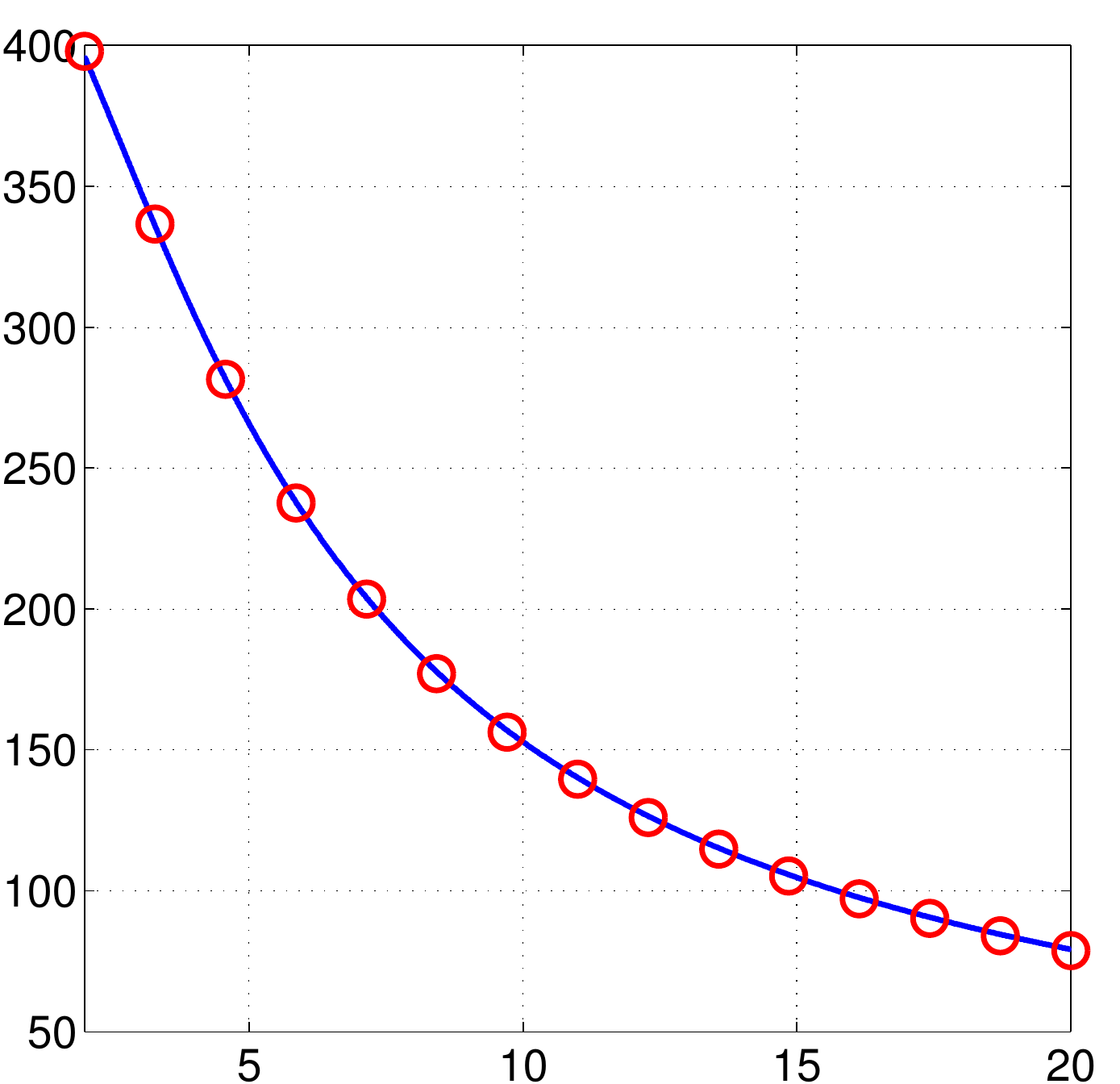}
}\\
\subfloat[$\sigma_2v_2(s)$]{
\includegraphics[width=0.3\linewidth]{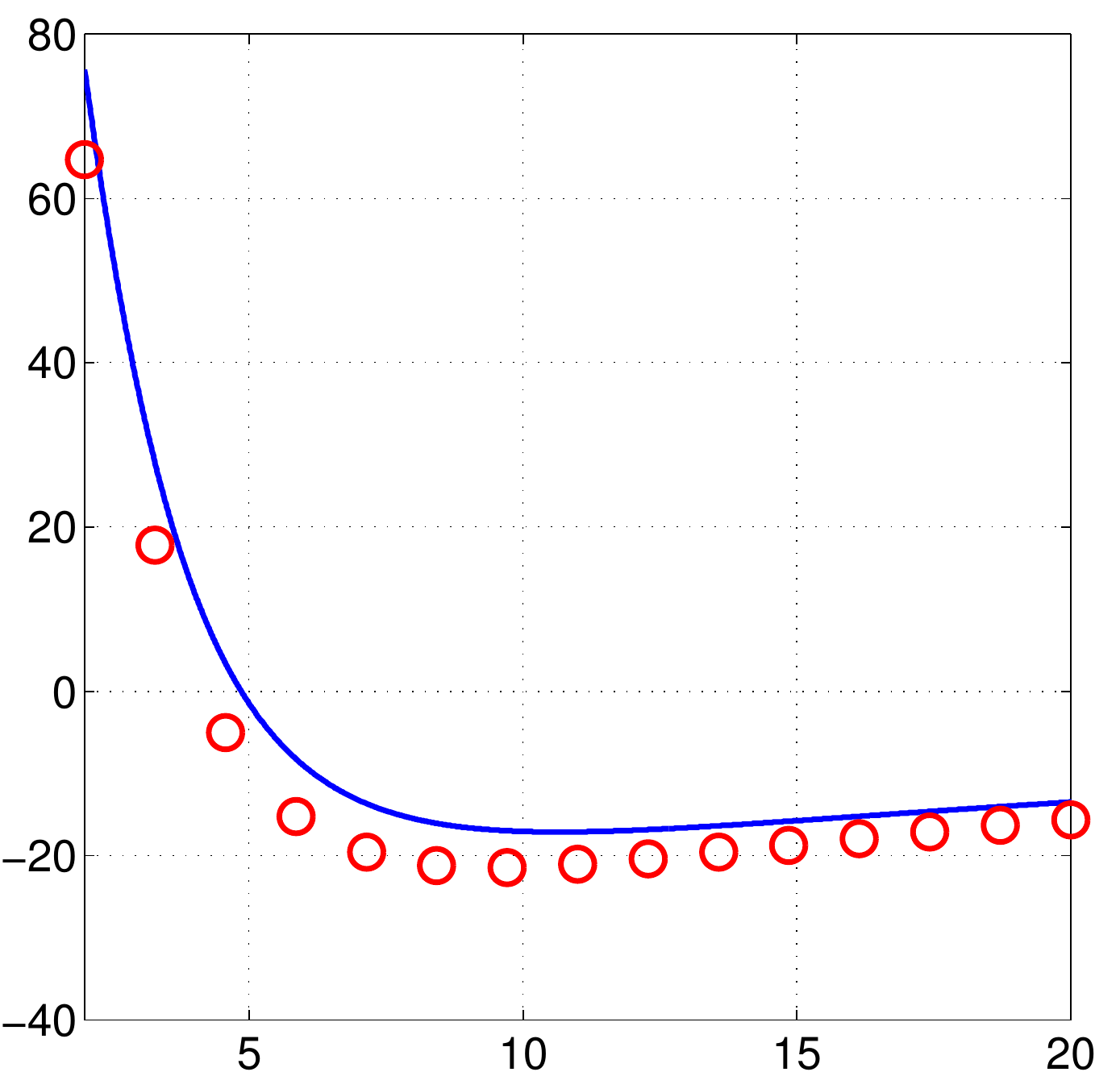}
}\;
\subfloat[$\sigma_3v_3(s)$]{
\includegraphics[width=0.3\linewidth]{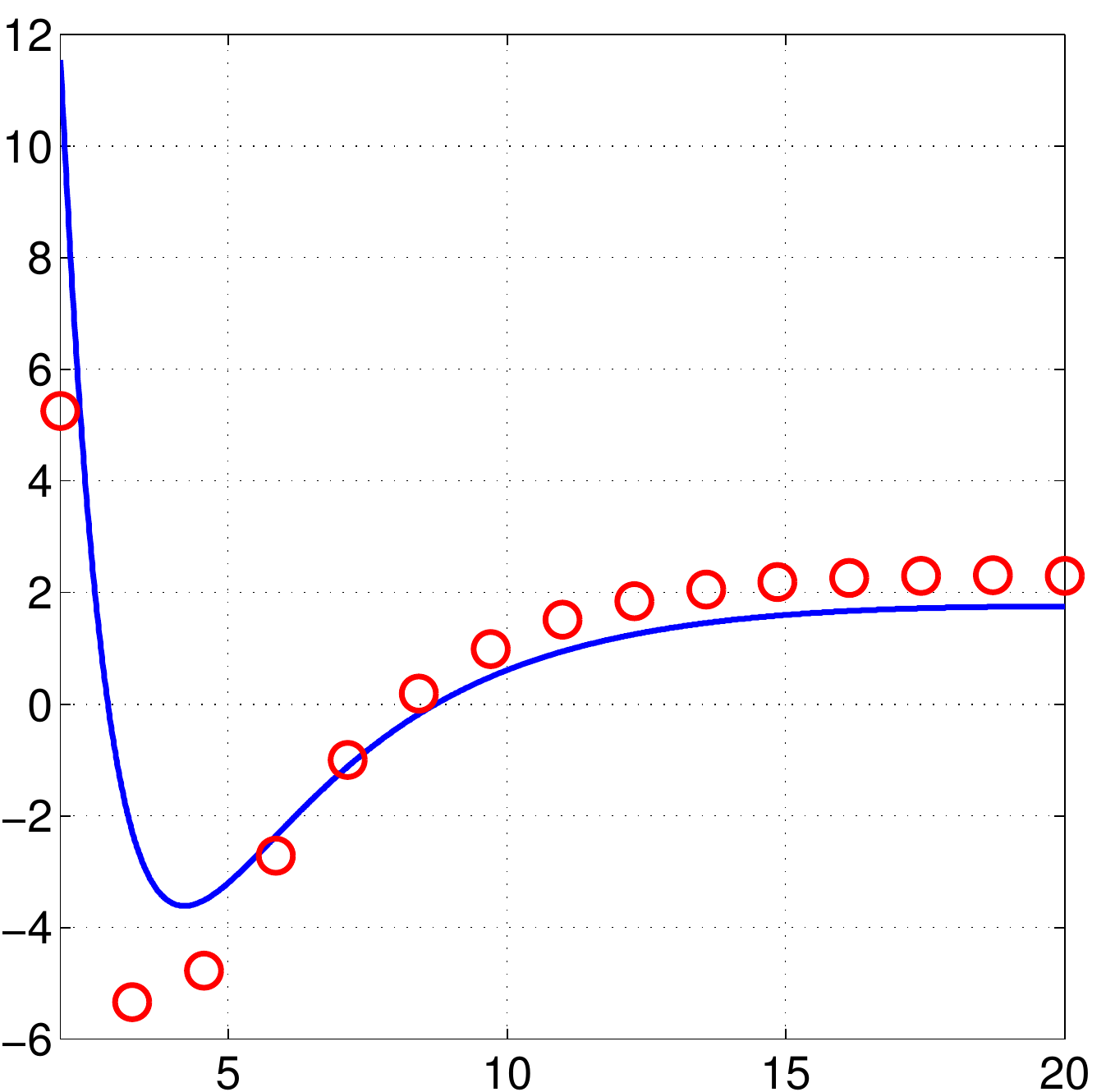}
}\;
\subfloat[$\sigma_4v_4(s)$]{
\includegraphics[width=0.3\linewidth]{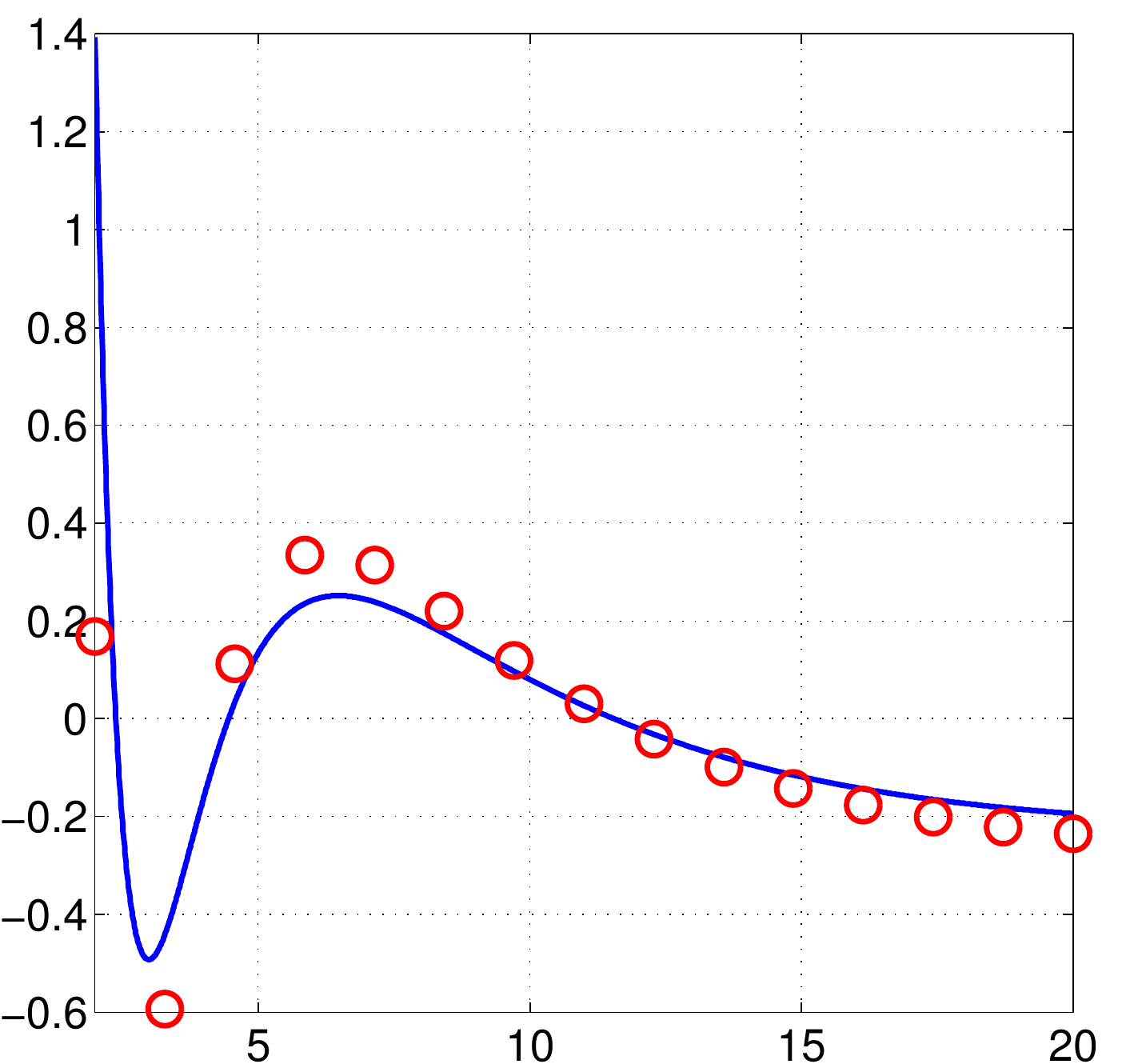}
}\\
\subfloat[$\sigma_5v_5(s)$]{
\includegraphics[width=0.3\linewidth]{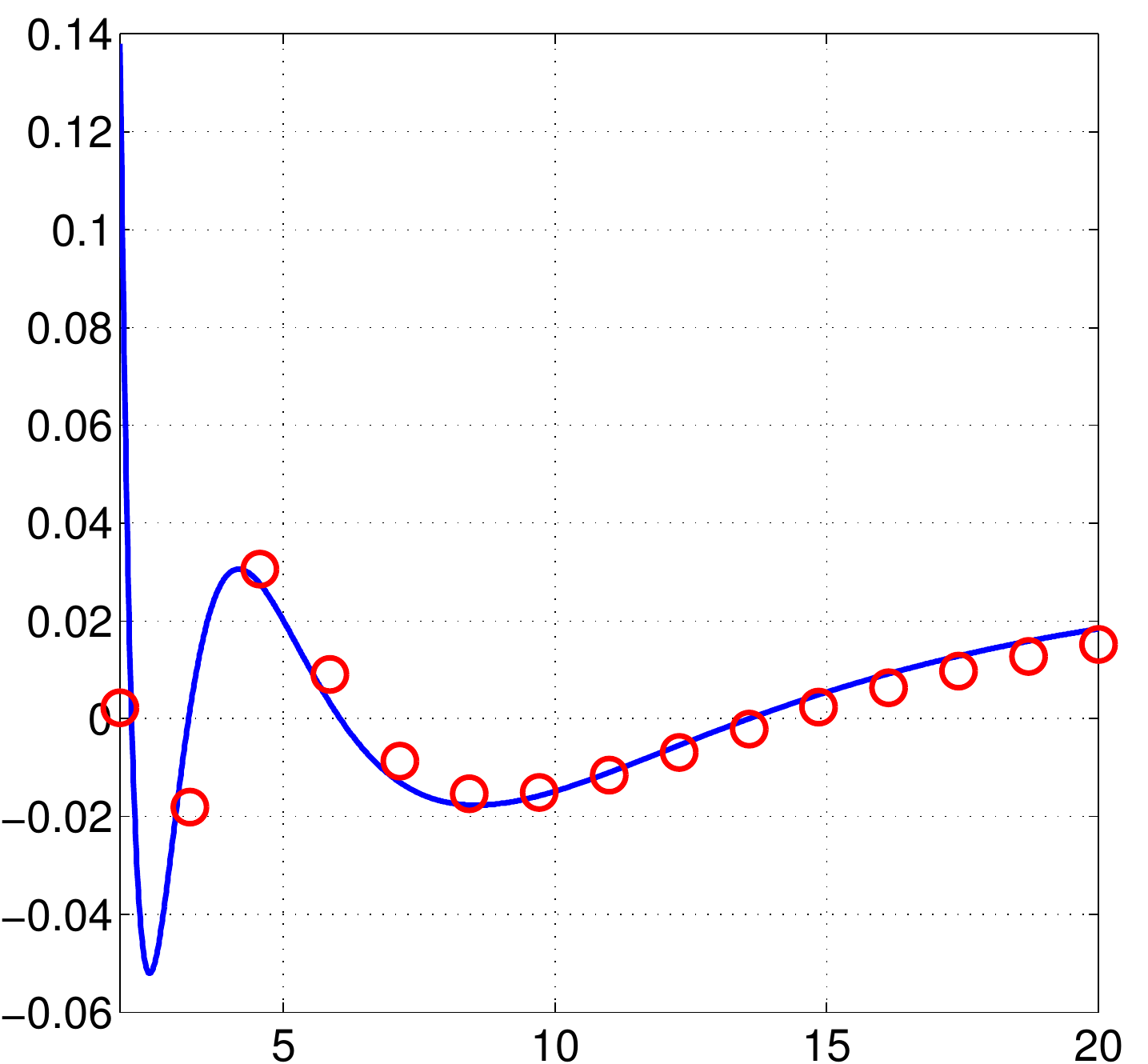}
}\;
\subfloat[$\sigma_6v_6(s)$]{
\includegraphics[width=0.3\linewidth]{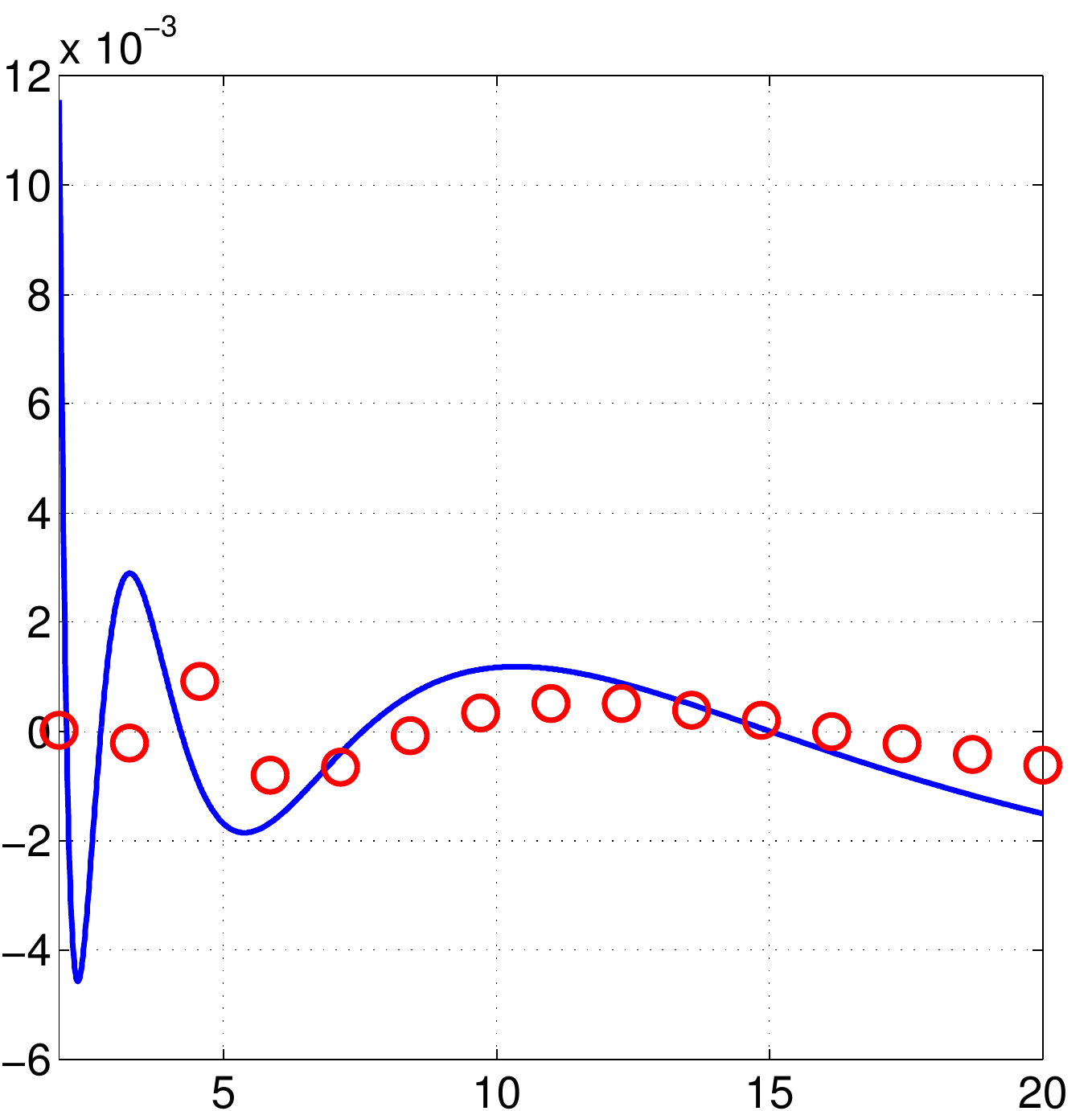}
}\;
\subfloat[$\sigma_7v_7(s)$]{
\includegraphics[width=0.3\linewidth]{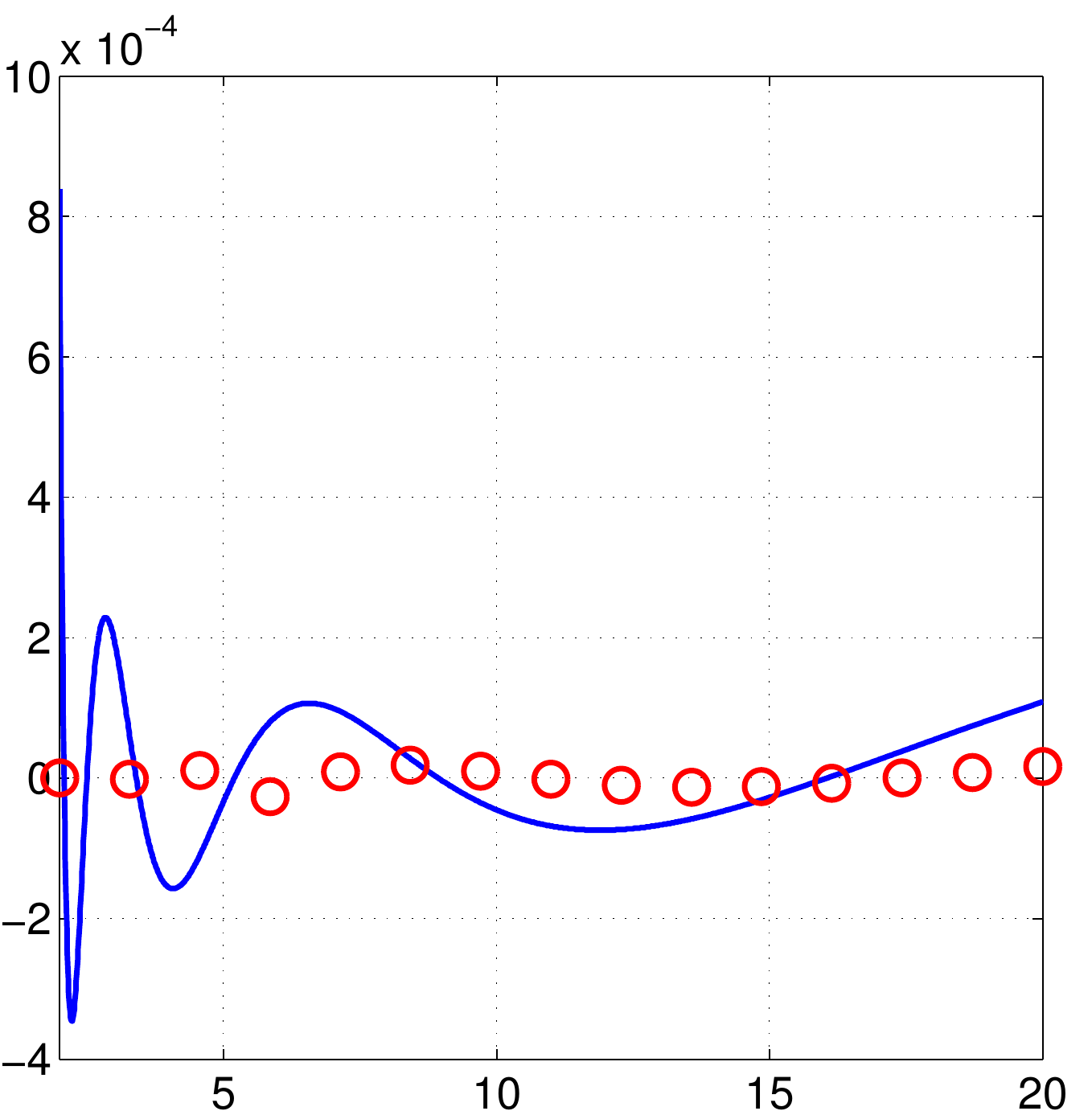}
}
\caption{The top left figure shows the solution $f(x,s)$ to the
  advection-diffusion equation \eqref{eq:ad}. The figure to its right
  shows the singular values of a finely resolved $\mF$ with 1999
  columns (blue x's) and a coarsely resolved $\mF$ with 15 columns
  (red o's). Each are scaled by the maximum singular value from each
  set. The remaining figures show the approximations of the singular
  functions $v_1(s)$ through $v_7(s)$ scaled by the respective
  singular values for $\mF$ with 1999 columns (blue lines) and 15
  columns (red o's). (Colors are visible in the electronic version.) }
\label{fig:svecs0}
\end{figure}

The second example is another second order boundary value problem with
spatially varying coefficients,
\begin{equation}
\label{eq:bvp}
-\frac{d}{dx} \left(a\,\frac{df}{dx} \right) \;=\; 1,\qquad x\in[0,1],
\end{equation}
with homogeneous boundary condtions, and
\begin{equation}
a \;=\; a(x,s) \;=\; 1 + 4s(x^2-x),\qquad s\in[0.1,0.9],
\end{equation}
The solution is
\begin{equation}
\label{eq:toysoln}
f(x,s) \;=\; -\frac{1}{8s}\log\left( 1+ 4s(x^2-x)\right),
\end{equation}
which is plotted in the top left of Figure \ref{fig:svecs1}.  Outside
the domain, $f$ has a singularity at $(x=0.5,s=1)$, which causes $f$
to grow rapidly along the line $x=0.5$ near the boundary $s=0.9$. This
local feature of the solution results in more rapid oscillations of
the singular functions $v_k(s)$ near the boundary $s=0.9$. The first
seven singular functions, scaled by the singular values, are plotted
in Figure \ref{fig:svecs1}. The rapid oscillations near the parameter
boundary $s=0.9$ are clearly visible.  In those same figures, we plot
the components of the corresponding singular vectors, scaled by the
singular values, for a data matrix $\mF$ with columns computed at
eleven equally spaced parameter values in the interval
$[0.1,0.9]$. Notice how the components of the singular vectors deviate
from the singular functions as $k$ increases, particularly in the
regions of rapid oscillations.

\begin{figure}[ht]
\centering
\subfloat[Surface]{
\includegraphics[width=0.3\linewidth]{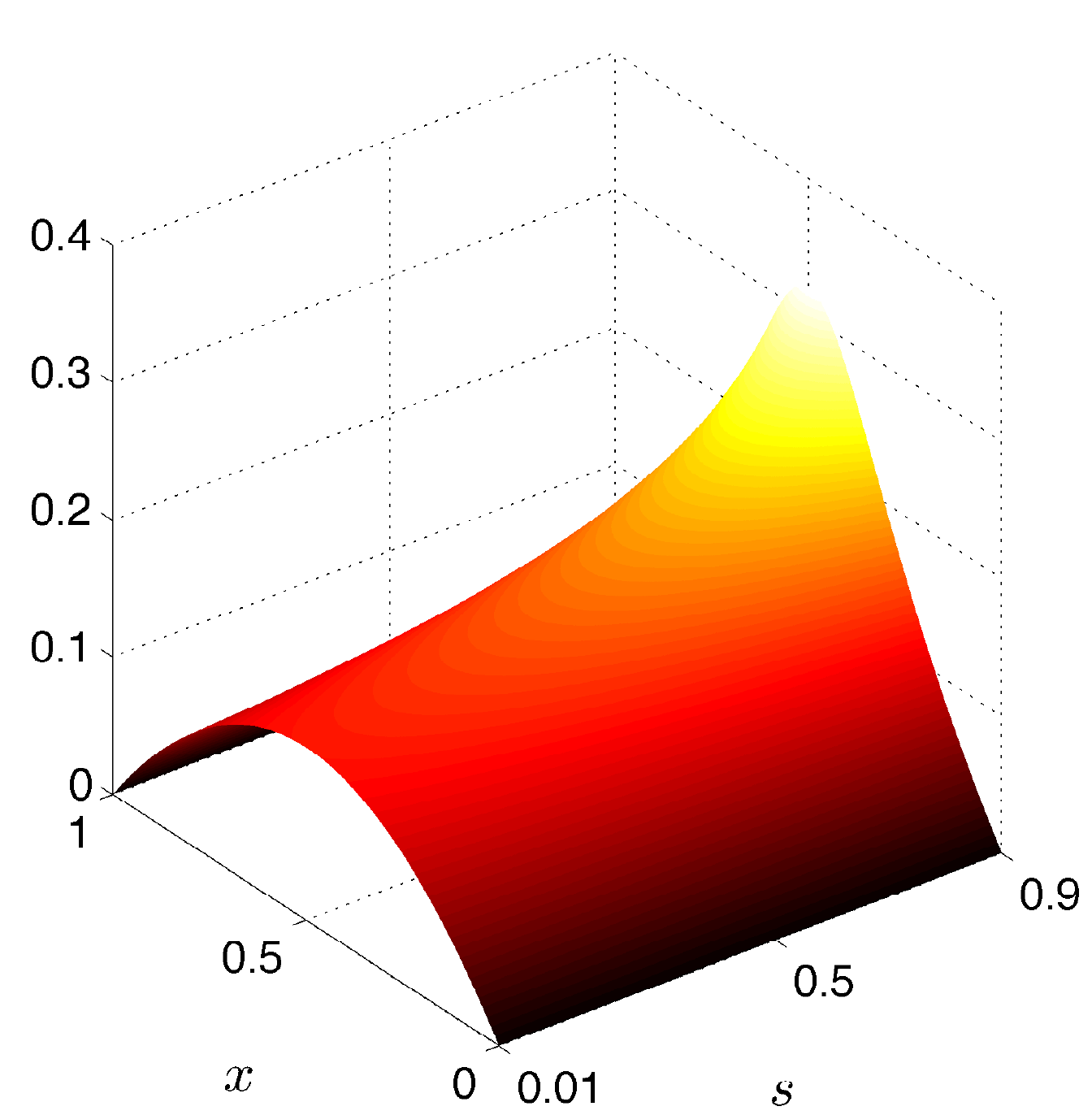}
}\;
\subfloat[Singular values]{
\includegraphics[width=0.3\linewidth]{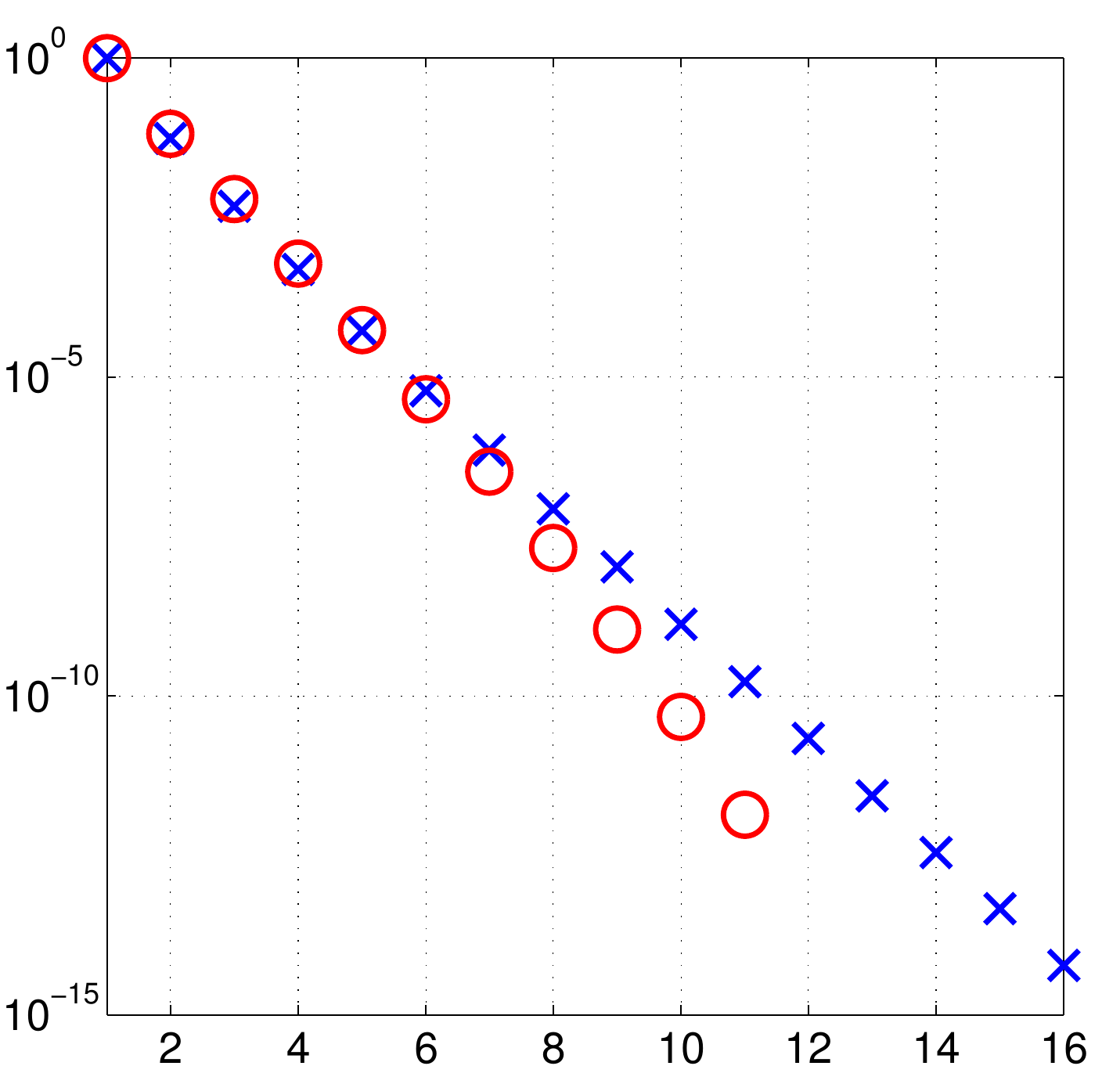}
}\;
\subfloat[$\sigma_1 v_1(s)$]{
\includegraphics[width=0.3\linewidth]{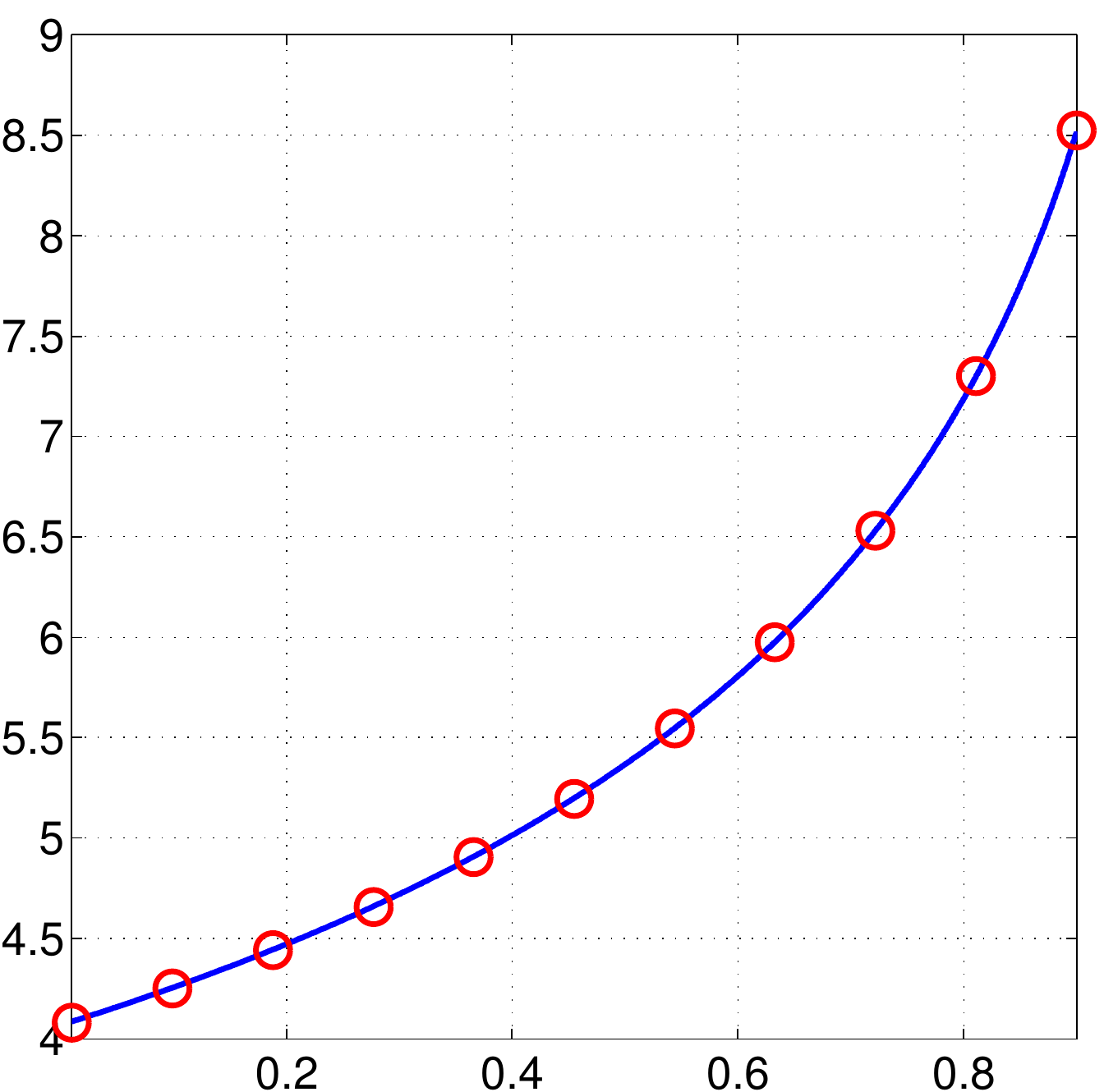}
}\\
\subfloat[$\sigma_2v_2(s)$]{
\includegraphics[width=0.3\linewidth]{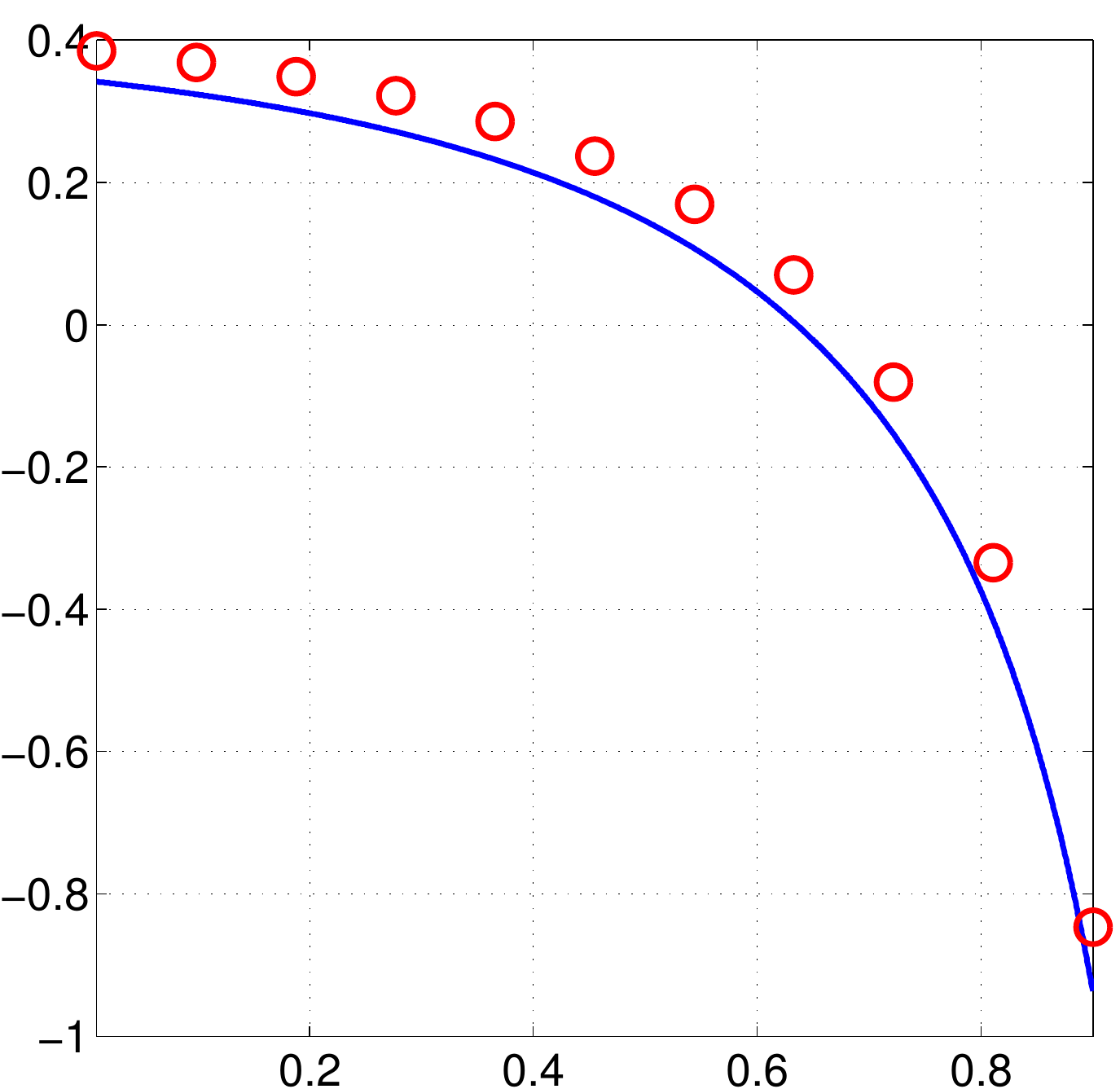}
}\;
\subfloat[$\sigma_3v_3(s)$]{
\includegraphics[width=0.3\linewidth]{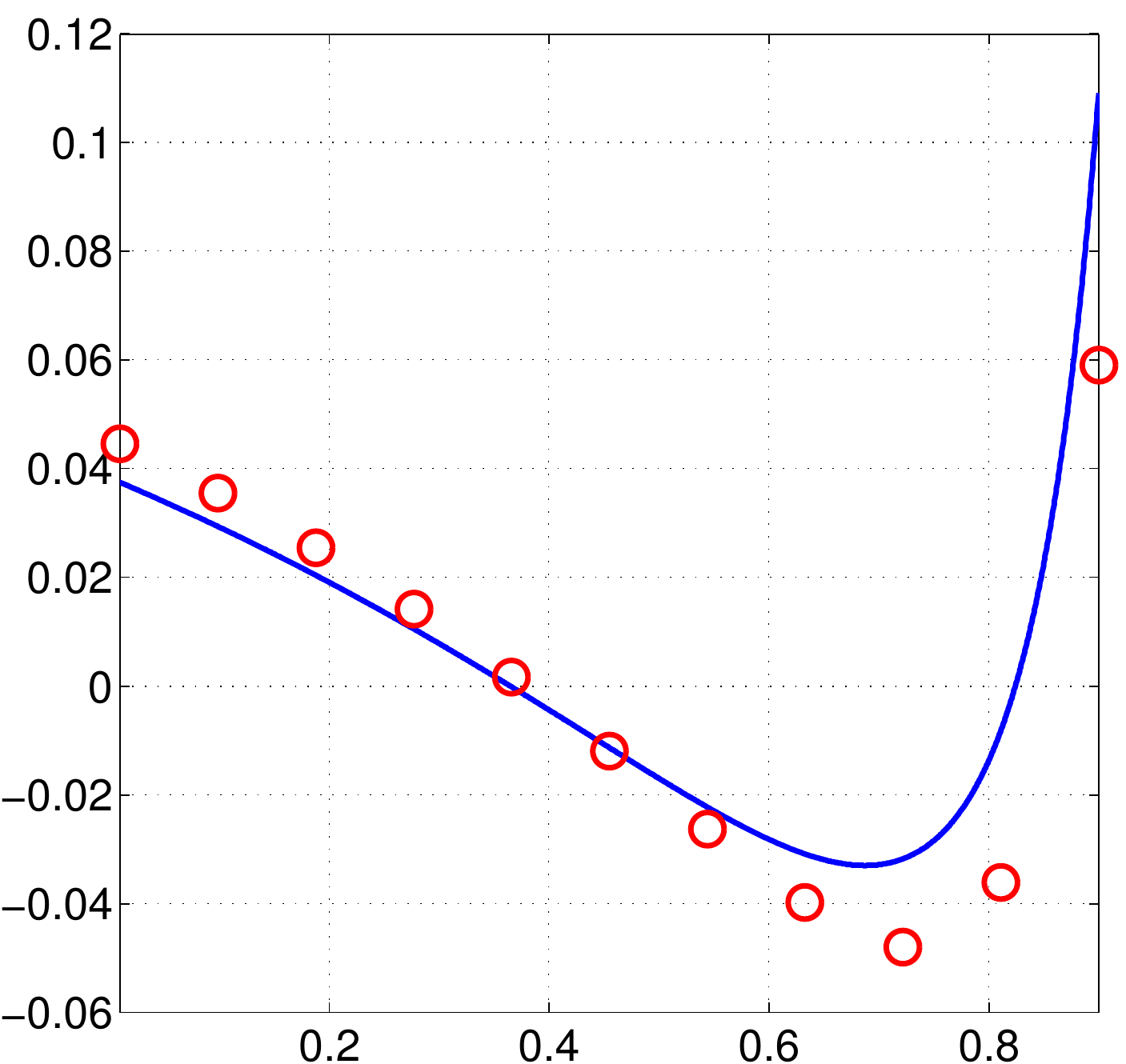}
}\;
\subfloat[$\sigma_4v_4(s)$]{
\includegraphics[width=0.3\linewidth]{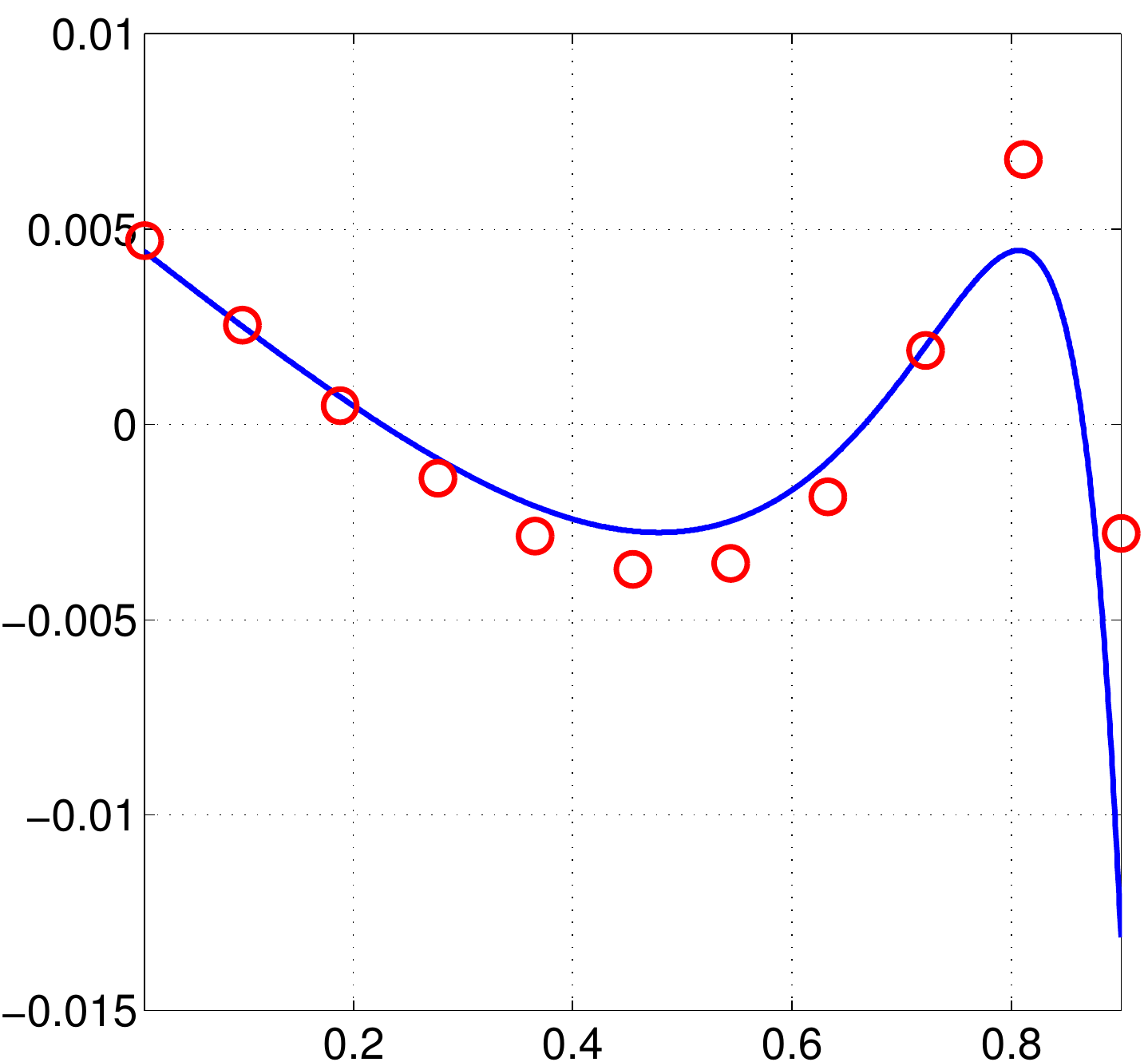}
}\\
\subfloat[$\sigma_5v_5(s)$]{
\includegraphics[width=0.3\linewidth]{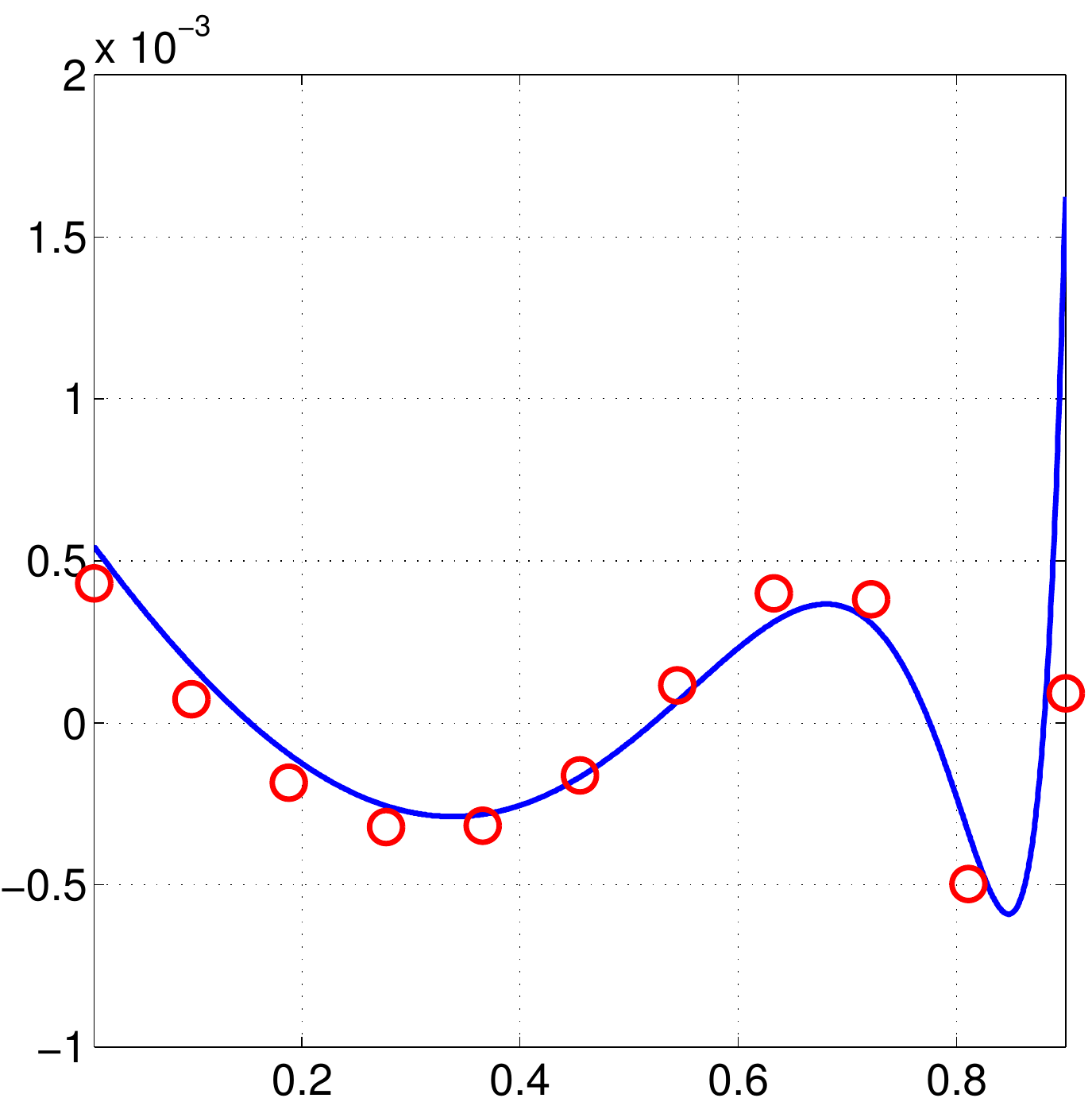}
}\;
\subfloat[$\sigma_6v_6(s)$]{
\includegraphics[width=0.3\linewidth]{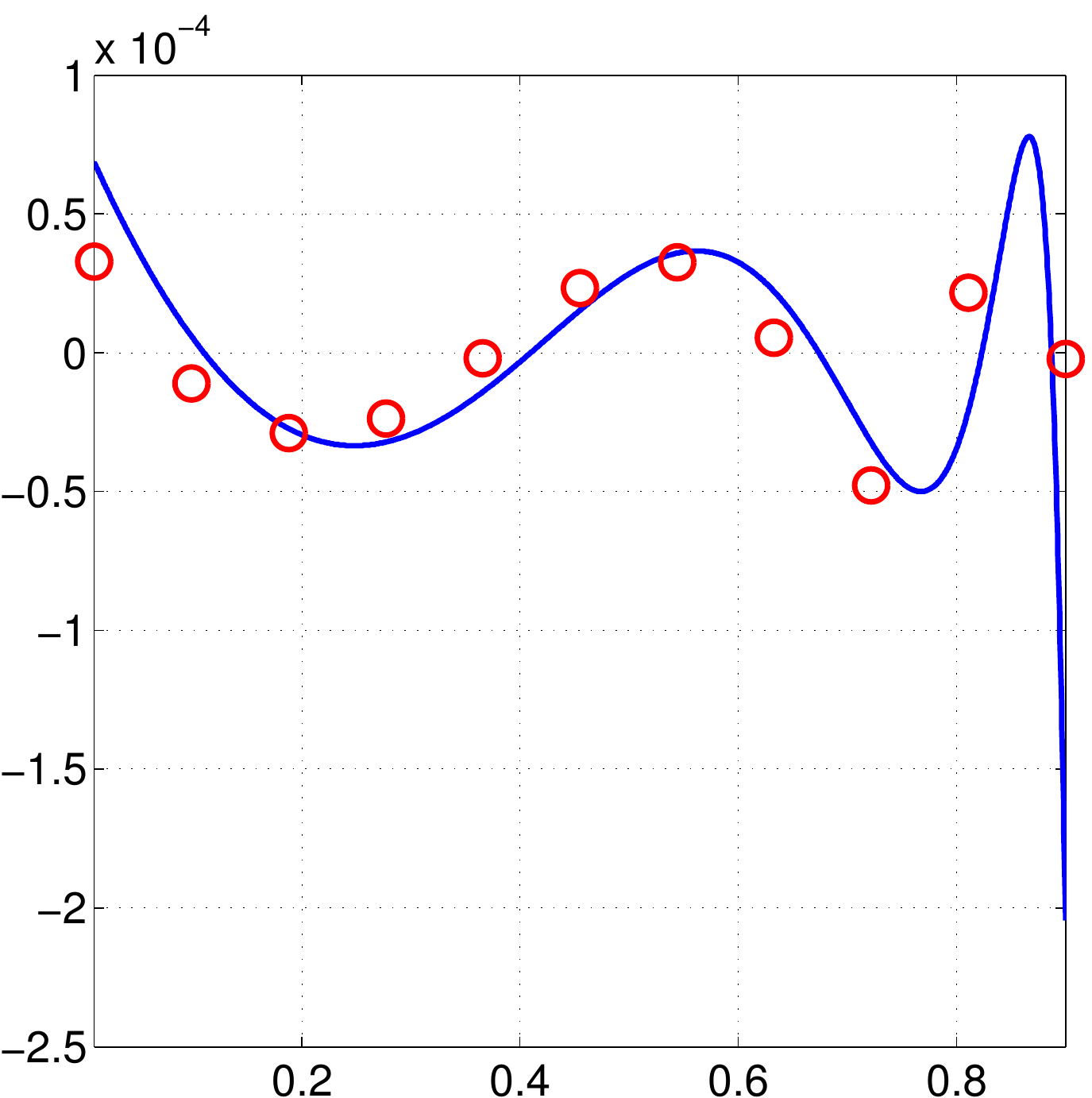}
}\;
\subfloat[$\sigma_7v_7(s)$]{
\includegraphics[width=0.3\linewidth]{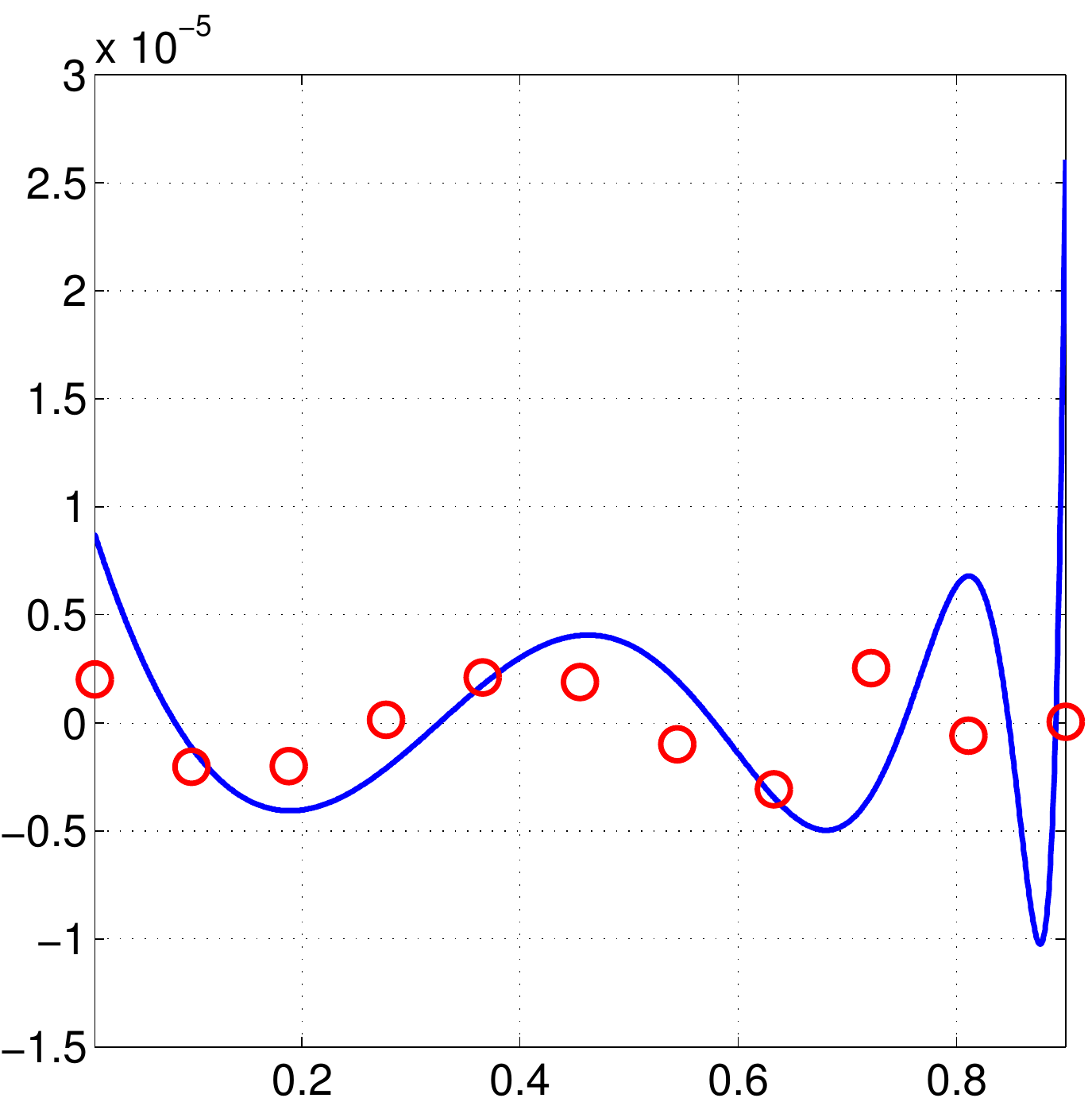}
}
\caption{The top left figure shows the solution $f(x,s)$ to the
  boundary value problem \eqref{eq:bvp}. The figure to its right shows the
  singular values of a finely resolved $\mF$ with 1999 columns (blue
  x's) and a coarsely resolved $\mF$ with 11 columns (red o's). Each
  are scaled by the maximum singular value from each set. The
  remaining figures show the approximations of the singular functions
  $v_1(s)$ through $v_7(s)$ scaled by the respective singular values
  for $\mF$ with 1999 columns (blue lines) and 11 columns (red
  o's). (Colors are visible in the electronic version.) }
\label{fig:svecs1}
\end{figure}

In the next sections, we will exploit the observation of non-uniformly
increasing oscillations in the right singular vectors to devise a
heuristic for the ROM.

\subsection{Constructing the reduced-order model}
Recall that the goal is to approximate $f(x,s)$ for some input $s$
that was not used to compute a training run. We can use the existence
of the SVE to justify the following approach.  Since we treat the
components of the right singular vectors $\mV$ as evaluations of the
singular functions $v_k(s)$, we can interpolate between the singular
vector components to approximate the singular functions at new values
of $s$. More precisely, define
\begin{equation}
\label{eq:interp}
\tilde{v}_k(s) \;=\; \sI(s;\,v_k(s_1),\dots,v_k(s_N))
\end{equation}
where $\sI$ is an interpolation operator that takes a value of $s$ and
the components of the singular vector as arguments. The form of the
interpolant may depend on the selection of the points $s_j$. For
example, if these points are the Chebyshev points or the nodes of a
Gaussian quadrature rule, then high order global polynomial
interpolation is possible. If the points are uniformly spaced, then
one may use piecewise polynomials or radial basis functions.

Unfortunately, the increasingly oscillatory character of the functions
$v_k(s)$ as $k$ increases combined with the fixed discretization $s_j$
causes concern for any chosen interpolation procedure as $k$
approaches $N$. In other words, the smoothness of $v_k(s)$ decreases
as $k$ increases, which diminishes confidence in the interpolation
accuracy.  Therefore, we seek to divide the right singular vectors
into two groups: those that are smooth enough to accurately
interpolate and those that are not. Specifically, we seek an $R=R(s)$
with $R\leq N$ such that for $k\leq R$ we have confidence in the
accuracy of the interpolant $\tilde{v}_k(s)$. We treat the remaining
interpolations with $k>R$ as unpredictable, and we model them with a
random variable. We will discuss the choice of $R$ in the next
section.

Given $R$, we model the PDE output at the space-time coordinate $x_i$
for the new parameter value $s$ as
\begin{equation}
\label{eq:rommodel}
\tilde{f}(x_i,s) \;=\; 
\sum_{k=1}^R \sigma_k\,u_k(x_i)\,\tilde{v}_k(s)
\;+\;  \sum_{k=R+1}^N \sigma_k\,u_k(x_i)\,\eta_{k-R},
\end{equation}
where $\eta_k$ are uncorrelated random variables with mean zero and
variance one; these represent the uncertainty in the interpolation
procedure for increasingly oscillatory functions. Under this
construction, the vector of values $f(x_i,s)$ is a random vector with
mean and covariance,
\begin{equation}
\label{eq:meancovar}
\begin{aligned}
\Exp{\tilde{f}(x_i,s)} &= \sum_{k=1}^R
\sigma_k\,u_k(x_i)\,\tilde{v}_k(s), \\ 
\Cov{\tilde{f}(x_i,s)}{\tilde{f}(x_j,s)}
&= \sum_{k=R+1}^N \sigma_k^2\,u_k(x_i)\,u_k(x_j).
\end{aligned}
\end{equation}
The reduced-order model we propose is the mean of this random vector,
\begin{equation}
\label{eq:gpmean}
f(x_i,s) \;\approx\; \Exp{\tilde{f}(x_i,s)}.
\end{equation}
The diagonal components of the covariance matrix provide a measure of
confidence for the reduced-order model at each $x_i$ similar to the
predication variance of a Gaussian process regression
model~\cite{Rasmussen2006}.

Next we show the reduced-order model is equivalent to applying the
interpolation procedure independently to the rows of a low rank
approximation of the matrix $\mF$. To set up the notation, partition
\begin{equation}
\mU=\bmat{\mU_1&\mU_2},\quad
\mSigma = \bmat{\mSigma_1 & \\ & \mSigma_2},\quad
\mV = \bmat{\mV_1 & \mV_2},
\end{equation}
where $\mU_1$, $\mSigma_1$, and $\mV_1$ contain $R$ columns. Then
\begin{equation}
\begin{aligned}
\mF &= \mU\mSigma\mV^T\\
&=\mU_1\mSigma_1\mV_1^T + \mU_2\mSigma_2\mV_2^T\\
&= \mF_1 + \mF_2\\
&= \bmat{
f^{(1)}(x_1,s_1) & \cdots & f^{(1)}(x_1,s_N) \\
\vdots & \ddots & \vdots \\
f^{(1)}(x_M,s_1) & \cdots & f^{(1)}(x_M,s_N)
} + \bmat{
f^{(2)}(x_1,s_1) & \cdots & f^{(2)}(x_1,s_N) \\
\vdots & \ddots & \vdots \\
f^{(2)}(x_M,s_1) & \cdots & f^{(2)}(x_M,s_N)
}.
\end{aligned}
\end{equation}
Then we have the following proposition.

\begin{prop}
\label{thm:interp}
If $\sI$ from \eqref{eq:interp} is a linear operation,  then 
\begin{equation}
\begin{aligned}
\Exp{\tilde{f}(x_i,s)} &=
\sI\left(s;\,f^{(1)}(x_i,s_1),\dots,f^{(1)}(x_i,s_N)\right),\\ 
\Cov{\tilde{f}(x_i,s)}{\tilde{f}(x_j,s)}
&= \sum_{k=R+1}^N f^{(2)}(x_i,s_k)\,f^{(2)}(x_j,s_k).
\end{aligned}
\end{equation}
\end{prop}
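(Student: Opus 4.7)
The plan is to prove the mean identity by linearity of $\sI$ and the covariance identity by the orthonormality of the right singular vectors. Each reduces to a short calculation, so I would organize them one at a time.

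For the mean, I would start from the expression $\Exp{\tilde{f}(x_i,s)} = \sum_{k=1}^R \sigma_k u_k(x_i)\tilde{v}_k(s)$ in \eqref{eq:meancovar} and substitute $\tilde{v}_k(s) = \sI(s;\,v_k(s_1),\ldots,v_k(s_N))$ from \eqref{eq:interp}. Because the weights $\sigma_k u_k(x_i)$ do not depend on the sample index, linearity of $\sI$ lets me pull the finite sum over $k$ inside the operator. The $j$-th argument that emerges is $\sum_{k=1}^R \sigma_k u_k(x_i) v_k(s_j)$, which is precisely entry $(i,j)$ of the low-rank factor $\mU_1\mSigma_1\mV_1^T = \mF_1$, and hence equals $f^{(1)}(x_i,s_j)$. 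Plugging these values back as the arguments of $\sI$ yields the first identity.

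For the covariance, I would interpret $\sum_k f^{(2)}(x_i,s_k)f^{(2)}(x_j,s_k)$ as an inner product of rows $i$ and $j$ of $\mF_2$. Summed over all $N$ column indices, this inner product is entry $(i,j)$ of $\mF_2\mF_2^T$. Substituting $\mF_2 = \mU_2\mSigma_2\mV_2^T$ and using $\mV_2^T\mV_2 = \eye$ (the last $N-R$ columns of the thin-SVD factor $\mV$ are orthonormal) collapses the product to $\mU_2\mSigma_2^2\mU_2^T$, whose $(i,j)$ entry is $\sum_{k=R+1}^N \sigma_k^2 u_k(x_i) u_k(x_j)$. This matches the covariance formula in \eqref{eq:meancovar}.

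The only delicate point is the range of summation in the stated identity: the natural equality coming out of $\mF_2\mF_2^T$ is $\sum_{k=1}^N f^{(2)}(x_i,s_k)f^{(2)}(x_j,s_k) = \sum_{k=R+1}^N \sigma_k^2 u_k(x_i)u_k(x_j)$, where the collapse to a $\delta_{lm}$ pairing relies on summing $v_l(s_k)v_m(s_k)$ over the full index range $k=1,\ldots,N$. I would therefore treat the reconciliation of this full-range sum with the stated range $k=R+1,\ldots,N$ as the main bookkeeping step in writing up the proof; beyond this, the argument is just two short manipulations using linearity of $\sI$ and orthonormality of $\mV$.
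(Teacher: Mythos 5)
Your proposal follows the paper's proof essentially step for step: the mean identity by writing the linear interpolant as $\sum_{j=1}^N w_j\,g(s_j)$ and exchanging the order of summation to recognize the entries of $\mF_1=\mU_1\mSigma_1\mV_1^T$, and the covariance identity via $\mC = \mU_2\mSigma_2^2\mU_2^T = \mU_2\mSigma_2\mV_2^T\mV_2\mSigma_2\mU_2^T = \mF_2\mF_2^T$ using the orthonormality of the columns of $\mV_2$.

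On the one point you flag: the ``reconciliation'' you defer is not a bookkeeping step you can complete, because the two summation ranges genuinely disagree. Since $\mF_2=\mU_2\mSigma_2\mV_2^T$ is $M\times N$, the entry $(\mF_2\mF_2^T)_{ij}$ is $\sum_{k=1}^N f^{(2)}(x_i,s_k)\,f^{(2)}(x_j,s_k)$, a sum over all $N$ columns; the sum over $k=R+1,\dots,N$ appearing in the proposition is instead the $(i,j)$ entry of the product of the last $N-R$ columns of $\mF_2$ with their transpose, which does not equal $\mU_2\mSigma_2^2\mU_2^T$ in general. The paper's own proof establishes only $\mC=\mF_2\mF_2^T$ and then asserts the result, so the range $k=R+1,\dots,N$ in the statement is best read as a typo for $k=1,\dots,N$: the index set $\{R+1,\dots,N\}$ belongs to the singular-vector sum $\sum_k \sigma_k^2\,u_k(x_i)\,u_k(x_j)$ in \eqref{eq:meancovar}, not to the column sum over $s_k$. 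Your instinct that the full-range sum is the correct identity is right; state and prove it with $k=1,\dots,N$ and the argument is complete.
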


\begin{proof}
For a function $g=g(s)$ with evaluations $g(s_j)$ the linear
interpolation can be written
\begin{equation}
\sI(s;\,g(s_1),\dots,g(s_N)) \;=\; \sum_{j=1}^N w_j\,g(s_j)
\end{equation}
for some set of weights $w_j=w_j(s)$. Then,
\begin{equation}
\begin{aligned}
\Exp{\tilde{f}(x_i,s)} &= \sum_{k=1}^R \sigma_k\,u_k(x_i)\,\tilde{v}_k(s) \\
&= \sum_{k=1}^R \sigma_k\,u_k(x_i)\, \left(\sum_{j=1}^N w_j\,v_k(s_j)\right) \\
&= \sum_{j=1}^N w_j\,\left(\sum_{k=1}^R \sigma_k\,u_k(x_i)\,v_k(s_j)\right) \\
&= \sum_{j=1}^N w_j\, f^{(1)}(x_i,s_j) \\
&= \sI\left(s;\,f^{(1)}(x_i,s_1),\dots,f^{(1)}(x_i,s_N)\right),
\end{aligned}
\end{equation}
as required. The covariance expression is easily proved using the
linear algebra notation. Define the $N\times N$ matrix $\mC_{ij} =
\Cov{\tilde{f}(x_i,s)}{\tilde{f}(x_j,s)}$. Then by the orthogonality
of the columns of $\mV_2$,
\begin{equation}
\mC \;=\; \mU_2\mSigma_2^2\mU_2^T \;=\;
\mU_2\mSigma_2\mV_2^T\mV_2\mSigma_2^T\mU_2^T \;=\; \mF_2\mF_2^T.
\end{equation}
as required.
\end{proof}

\subsection{Choosing $R$}
\label{sec:chooseR}
We must still choose $R$ that determines the split between smooth and
non-smooth singular vectors. We will exploit the observation of the
oscillating singular vectors from Section \ref{sec:oscillations} and
make use of Assumption \ref{bigass}. We define the following
\emph{variation metric},
\begin{equation}
\label{eq:deftau}
\tau(r,s) \;=\; \sum_{k=1}^r \left| \frac{\mV_{j+1,k} - \mV_{j,k}}{\Delta s}
\right|,\qquad\mbox{ for $s_j \leq s < s_{j+1}$.}
\end{equation}
By Assumption \ref{bigass}, $\tau$ is an increasing function of $r$ up
to some $R=R(s)$. Loosely, if $\tau$ is too large, then we have
entered the range of $k$ where interpolations of $v_k(s_j)$ are not to
be trusted. We will quantify this with a threshold $\bar{\tau}$.
Given $\bar{\tau}$, we choose $R=R(s,\bar{\tau})$ to be the largest
$r$ such that $\tau(r,s)\leq \bar{\tau}$.

To determine the appropriate threshold $\bar{\tau}$, we use a set of
PDE evaluations $f_\ell =f(x,s_\ell)$ with $\ell=1,\dots,L$ for
testing, where $s_\ell$ is not in the training set (i.e.,
$s_\ell\not=s_j$ for any $\ell$ or $j$). We choose a set of candidate
thresholds $\bar{\tau}_m$.  For each testing models and each candidate
threshold, we compute the relative error
\begin{equation}
\label{eq:testerr}
\sE(s_\ell,\bar{\tau}_m) = \left[
\sum_{i=1}^M \left(
f(x_i,s_\ell) - \sum_{k=1}^R \sigma_k\,u_k(x_i)\,\tilde{v}_k(s_\ell)
\right)^2 \middle/ 
\sum_{i=1}^M
f(x_i,s_\ell)^2 
\right]^{1/2}
\end{equation}
where $R=R(s_\ell,\bar{\tau}_m)$. These errors can be visualized, and
the final threshold is chosen so that the error in the testing set is
relatively small.

We demonstrate this process using the boundary value problem from
\eqref{eq:bvp}. The training models consist of solutions computed at
eleven equally spaced values of the parameter $s$ in the range
$[0.1,0.9]$.  We compute a test model at the midpoint of each interval
$[s_j,s_{j+1}]$, where $s_j$ was used to compute the training
models. The range of the variation metric $\tau$ from
\eqref{eq:deftau} for these testing sites is roughly 0.1 to 42.8. We
choose 20 candidate thresholds $\bar{\tau}_m$ in this range and
compute the error in the reduced-order model at the testing sites (see
\eqref{eq:testerr}) for each candidate threshold. These errors are
displayed in Figure \ref{fig:error}. We want to choose the split $R$
such that the ROM uses the fewest left singular vectors with
the maximal accuracy; using fewer singular vectors reduces the 
computational work and produces a simpler model. 
The errors in Figure \ref{fig:error} show that
the reduced-order model is as accurate as possible for each testing
site after the fourth candidate threshold, which is roughly
$\bar{\tau}=6.84$.
For this threshold, Table \ref{tab:thresh} displays
the split $R$ between the right singular vectors that admit an
accurate interpolant and those that are modeled with a random variable
for each of the testing sites. Notice that the number of smooth right
singular vectors is smaller for testing sites near the boundary
$s=0.9$ of the parameter domain, which is precisely what we would have
expected.

\begin{figure}
\begin{minipage}[b]{0.49\textwidth}
  \centering
  \includegraphics[width=0.8\linewidth]{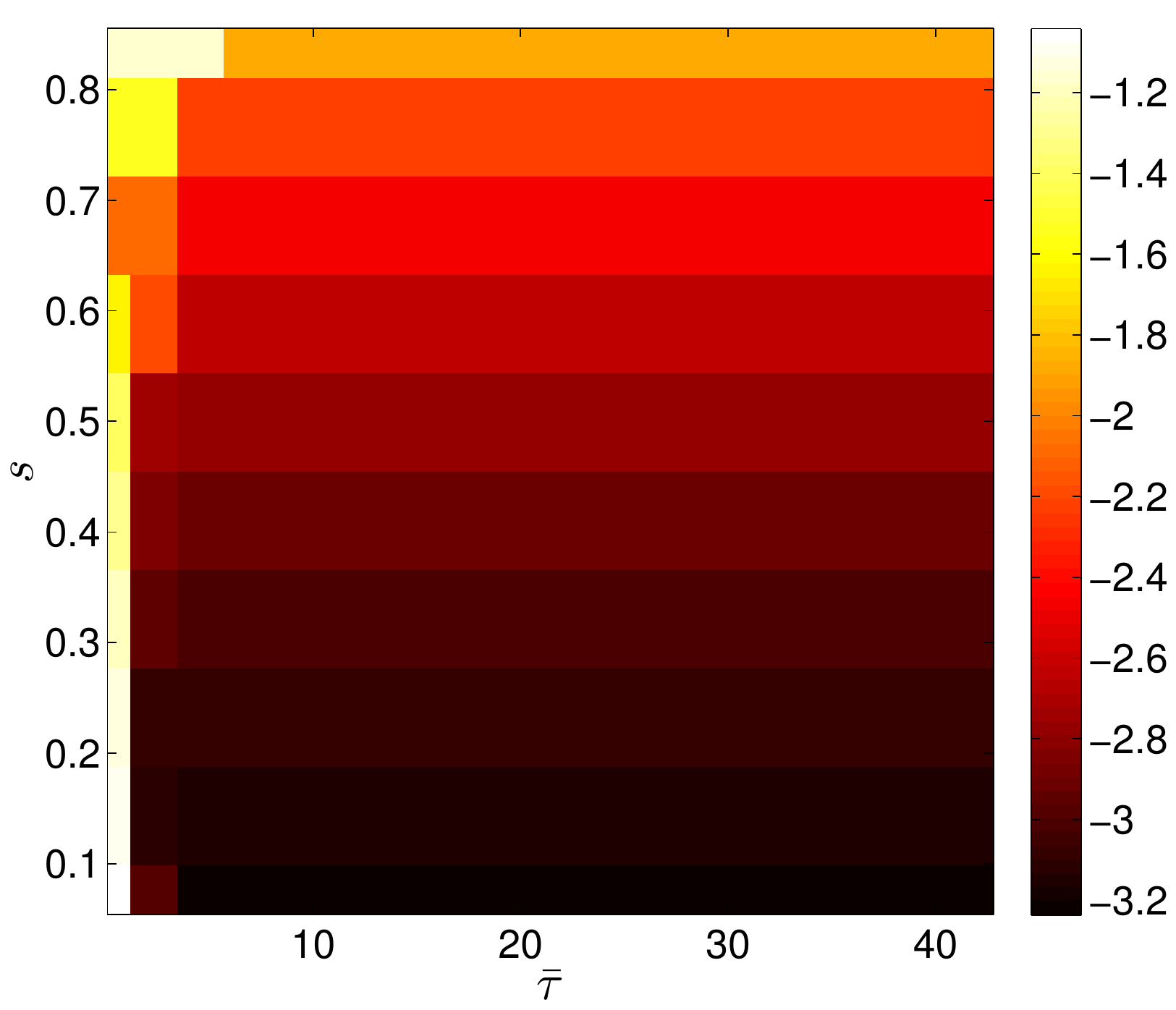}%
  \captionof{figure}{The log of the relative error in the mean prediction of the
    ROM as a function of $s$ and $\bar{\tau}$. (Colors are visible in the
    electronic version.)}
  \label{fig:error}
\end{minipage}
\hfill
\begin{minipage}[b]{0.49\textwidth}
\centering
\begin{tabular}{c|c|c}
  $s$ & $R(s,\bar{\tau})$ & $\sE(s,\bar{\tau})$ \\
  \hline
  0.0545 & 4 & 0.0006\\
  0.1435 & 5 & 0.0007\\
  0.2325 & 6 & 0.0008\\
  0.3215 & 6 & 0.0010\\
  0.4105 & 5  & 0.0013\\
  0.4995 & 6 & 0.0017\\
  0.5885 & 4 & 0.0023\\
  0.6775 & 5 & 0.0034\\
  0.7665 & 3 & 0.0059\\
  0.8555 & 2 & 0.0129
\end{tabular}
\captionof{table}{The split and the corresponding ROM error for $\bar{\tau}=6.84$
  and different values of $s$.}
\label{tab:thresh}
\end{minipage}
\end{figure}

We can compare the splitting strategy based on $R=R(s)$ with a
standard truncation strategy based on the magnitudes of the singular
values of $\mF$. The mean of the random vector \eqref{eq:gpmean} is
equivalent to interpolating a truncated SVD approximation of the data
matrix $\mF$, as shown in Proposition \ref{thm:interp}. However, the
magnitudes of the singular values provide no insight into the
uncertainty in the interpolation procedure. Our splitting strategy
chooses a different truncation for the mean \eqref{eq:gpmean} for each
$s$ based on the capability of the interpolation procedure to
accurately approximate the right singular functions $v_k(s)$ at the
point $s$. The singular values that are not in the mean contribute to
the covariance-based confidence measure from \eqref{eq:meancovar}. A
global truncation based on the singular values would create the same
prediction variance for every $s$, and it would always be on the order
of the largest truncated singular value. In other words, it provides
no information on how the confidence in the prediction changes as $s$
varies.

\section{Implementation in Hadoop}
\label{sec:implement}
Constructing the ROM for highly resolved simulations (i.e., large $M$)
requires significant data processing. We have implemented the
construction in the MapReduce framework, which enables us to take
advantage of Hadoop for large-scale distributed data processing.  To
construct the reduced-order model, the outputs from the high-fidelity
simulations are sent to and stored in the Hadoop cluster. Our
implementation then proceeds in three steps:
\begin{enumerate}
\item Create the tall-and-skinny matrix $\mF$ from the simulation data.
\item Compute the singular value decomposition of $\mF$.
\item Generate the coefficients of the reduced-order model and evaluate
  solutions from the ROM.
\end{enumerate}
In what follows, we give a very brief overview of the MapReduce
framework, and then we describe each step of the implementation in
Hadoop.

\subsection{MapReduce/Hadoop}
Google devised MapReduce because of the frustration programmers
experienced as they constantly juggled the complexity of developing
distributed, fault-tolerant data computational
algorithms~\cite{Dean2004-MapReduce}. Early data-intensive computing
at Google was a complex mix of ad hoc scripts. Their solution was the
MapReduce computation model: a simple, general interface for a common
template behind their data analysis tasks that hides the details of
the parallel implementations from the programmer.  Due to its
generality, the MapReduce computation model has also been an effective
paradigm for parallelizing tasks on GPUs~\cite{He-2008-MARS},
multi-core systems~\cite{Talbot-2011-pheonix++}, and traditional HPC
clusters~\cite{Plimpton-2011-MRMPI}.

The MapReduce model consists of two elements inspired by functional
programming: a \Map operation to transform the input into a key/value
pair and a \Reduce operation to process information with the same
key. The user provides both functions, which cannot have any side
effects. A MapReduce implementation executes the \Map function on the
entire dataset in parallel; see Figure~\ref{fig:mapreduce}. A
canonical MapReduce dataset is a terabyte-sized text file split into
individual lines. In this case, each \Map function receives only a few
thousand lines from the enormous file, which it processes and sends
via \Shuffle to the appropriate \Reduce function. The \Shuffle
operation groups \Map outputs by the key, and the \Reduce function
processes all outputs with the same key---e.g., counting the number of
times a word appears in a large collection of documents.

Google's implementation of MapReduce is proprietary. An alternative,
open source implementation named Hadoop has become the industry
standard for large-scale data processing. More information on Hadoop
can be found at the Cloudera website~\cite{Hadoop2010}. The Hadoop
Distributed File System (HDFS) is a fault-tolerant, replicated, block
file system designed to run using inexpensive, consumer grade hard
disk drives on a large set of nodes.

\begin{figure}[t]
\includegraphics[width=\linewidth]{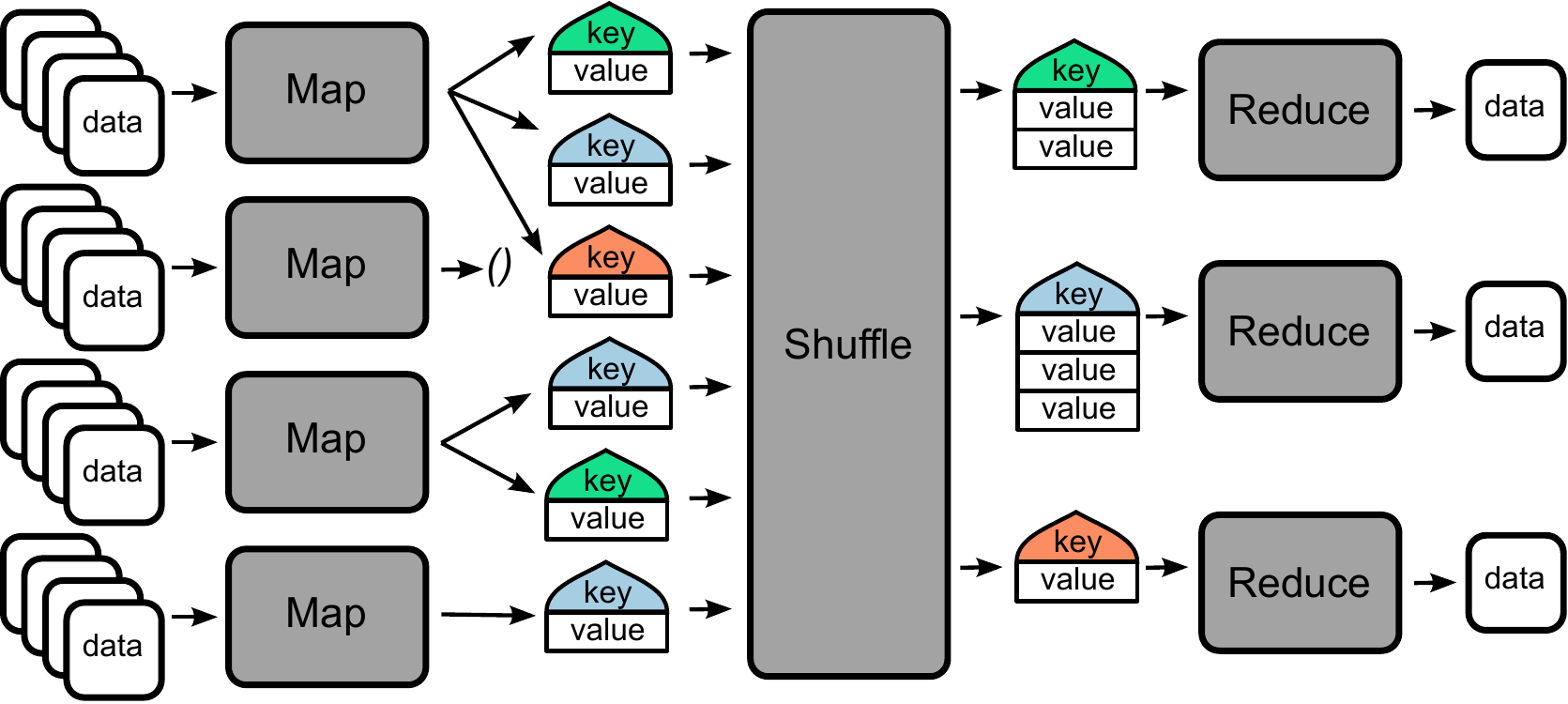}
\caption{The MapReduce system by Google is inspired by a functional
  programming paradigm and consists of three phases: transform data
  (\Map), aggregate results (\Shuffle), and compute and store
  (\Reduce). All \Map and \Reduce functions are independent, which
  allows the system to schedule them in parallel. }
\label{fig:mapreduce}
\end{figure}  

\subsection{Assembling the matrix from simulation outputs}

The first step in the construction of the ROM is to reorganize the
data into the tall-and-skinny matrix $\mF$.  This step is particularly
communication intensive as it requires reorganizing the data from the
columns (the natural outputs of the simulations) to rows for the
tall-and-skinny SVD routine. To do this in Hadoop we create a text
file where each line is a path to a file containing the outputs of one
simulation stored in HDFS. Then the \Map function reads simulation
data from HDFS and outputs the data keyed on the row of matrix
$\mF$. The \Reduce function aggregates all the entries in the row and
outputs the realized row.  The outcome of this first MapReduce
iteration is the matrix $\mF$ stored by rows on the distributed file
system.  More explicit descriptions of the functions are given in
Figure~\ref{fig:mr-assemble}

\begin{figure}
\begin{minipage}[t]{0.48\linewidth} \footnotesize
\textbf{Map}(\emph{key}$=$ simulation id, \emph{value}$=$ empty)\\
[1ex] Read simulation data based on the simulation id \\ 
Emit each point in a simulation as a record where the \emph{key} is the row of
the matrix $\mF$---constructed from the spatial location and time step---and 
the \emph{value} contains both the column id
in the matrix---given by the value of the parameter $s$---and the
value from the simulation data. \end{minipage}\hfill%
\begin{minipage}[t]{0.48\linewidth} \footnotesize
\textbf{Reduce}(\emph{key}$=$ row id, \emph{values}$= $\{column id, $F_{ij}$\})\\[1ex]
Read all of the values, and emit the combined row as a record where the \emph{key} is the row id and the \emph{value} is the array $\vf_i^T$. 
\end{minipage}
\caption{Map and Reduce functions to assemble the matrix $\mF$ from simulation data.}
\label{fig:mr-assemble}
\end{figure}

From a matrix perspective, each mapper processes a subset of the
entries of $\mF$. For example, assume the disjoint index sets $\Omega_1$,
$\Omega_2$, $\Omega_3$, and $\Omega_4$ contain the indices of $\mF$. 
Then the following diagram shows four mappers processing the simulation
data:
\[ 
 \underbrace{
  \begin{array}{l} 
    \{ F_{ij} \mid (i,j) \in \Omega_1 \} 
      \xrightarrow[\text{Map}]{} 
    \{ ( \underbrace{\;\,\, \vphantom{\mF_{ij}}i\;\,\, }_{\mathclap{\text{Key}}},   
         \underbrace{j, F_{ij}}_{\text{Value}} ) \mid (i,j) \in \Omega_1 \} \\
    \{ F_{ij} \mid (i,j) \in \Omega_2 \} 
      \xrightarrow[\text{Map}]{} 
    \{ ( \underbrace{\;\,\, \vphantom{\mF_{ij}}i\;\,\, }_{\mathclap{\text{Key}}},   
         \underbrace{j, F_{ij}}_{\text{Value}} ) \mid (i,j) \in \Omega_2 \} \\
    \{ F_{ij} \mid (i,j) \in \Omega_3 \} 
      \xrightarrow[\text{Map}]{} 
    \{ ( \underbrace{\;\,\, \vphantom{\mF_{ij}}i\;\,\, }_{\mathclap{\text{Key}}},   
         \underbrace{j, F_{ij}}_{\text{Value}} ) \mid (i,j) \in \Omega_3 \} \\
    \{ F_{ij} \mid (i,j) \in \Omega_4 \} 
      \xrightarrow[\text{Map}]{} 
    \{ ( \underbrace{\;\,\, \vphantom{\mF_{ij}}i\;\,\, }_{\mathclap{\text{Key}}},   
         \underbrace{j, F_{ij}}_{\text{Value}} ) \mid (i,j) \in \Omega_4 \} \\
   \end{array}         
}_{\text{Map Stage}} 
\quad
\underbrace{%
  \begin{array}{l} 
   (1, \{ F_{1,:} \} ) 
     \xrightarrow[\text{Red.}]{} 
   (1, \vf_1^T) \\
   (2, \{ F_{2,:} \} ) 
     \xrightarrow[\text{Red.}]{} 
   (2, \vf_2^T) \\
   \qquad \vdots \\
      (m, \{ F_{m,:} \} ) 
     \xrightarrow[\text{Red.}]{} 
   (m, \vf_m^T)
  \end{array}
}_{\text{Reduce Stage}} \] 
Hadoop assigns the reducers randomly to
nodes of the cluster. If we have four nodes, then the output from
those four reducers will all be stored together, 
\[ 
\mF = \bigl\{ \{ \mF_1 \} , \{ \mF_2 \} , \{ \mF_3 \} ,  \{ \mF_4 \} \bigr\}. 
\]
where each $\mF_i$ is a random subset of rows of the original
matrix. Each of these blocks $\mF_i$ also stores the id's of each row
it contains.


\subsection{TSQR and SVD in Hadoop}
\label{sec:tssvd}

Once we have the matrix $\mF$ stored by rows on disk, we compute its
tall-and-skinny QR (TSQR)
factorization~\cite{Benson-preprint-direct-tsqr}. The basis for the
MapReduce TSQR algorithm is the communication-avoiding QR
factorization~\cite{Demmel-2012-CAQR}. The strategy is to divide the
tall-and-skinny matrix $\mF$ into many smaller, tall matrices to be
decomposed independently via the QR factorization. This process is
repeated on the new matrix formed by all the $\mR$ factors computed in
the previous step until there is only a single $\mR$ left. This
algorithm was shown to have superior numerical stability to a large
Householder-style procedure~\cite{Mori-2012-allreduce}. The HDFS
stores the matrix in small chunks according to its internal splitting
procedure. Each \Map function reads a small submatrix and computes
a QR factorization. To record the association between
this QR factorization and all other QR factorizations computed
in the Map stage, we create a small tag $\phi$ that's a
universally unique identifier. The \Map function then writes the $\mQ$
factor back to disk with this small tag $\phi$.  Finally, it outputs
the $\mR$ factor and the same tag $\phi$ with key $0$. All map
functions output their $\mR$ factor with that same key. Because of
this, the outputs all go to the same reducer.
This single reducer was not a limitation for our applications, but a
recursive procedure in~\cite{Benson-preprint-direct-tsqr} is possible
if the \Reduce becomes burdensome on a single node.  The diagram below
and the description that follows demonstrates the procedure when $\mF$
is split into four blocks.
\[ 
 \underbrace{
  \begin{array}{l} 
    \{ \mF_1 \} \xrightarrow[\text{Map}]{\text{QR}} \{ (\phi_1, \mQ_1) \}, (\;\,\, 0 \,\,\; , \mR_1, \phi_1) \\
    \{ \mF_2 \} \xrightarrow[\text{Map}]{\text{QR}} \{ (\phi_2, \mQ_2) \}, (\;\,\, 0 \,\,\; , \mR_2, \phi_2) \\
    \{ \mF_3 \} \xrightarrow[\text{Map}]{\text{QR}} \{ (\phi_3, \mQ_3) \}, (\;\,\, 0 \,\,\; , \mR_3, \phi_3) \\
    \{ \mF_4 \} \xrightarrow[\text{Map}]{\text{QR}} \underbrace{\{ (\phi_4, \mQ_4) \}}_{\text{To Disk}}, (\underbrace{\;\,\, \vphantom{\mR_4 \phi_r}0 \;\,\, }%
    _{\mathclap{\text{Key}}}, \underbrace{\mR_4, \phi_4}_{\text{Value}})  \\
  \end{array}
}_{\text{Map Stage}} 
\quad
\underbrace{
  (0, \underbrace{\kbordermatrix{
  \\
  \phi_1 & \mR_1 \\
  \phi_2 & \mR_2 \\
  \phi_3 & \mR_3 \\
  \phi_4 & \mR_4 }}_{\text{Values}} )
  \xrightarrow[]{\text{QR}}
  \underbrace{\{ \mR \}}_{\text{Disk}}, \begin{array}{l}
    (\phi_1, \mQ_{1,1}) \\
    (\phi_2, \mQ_{2,1}) \\
    (\phi_3, \mQ_{3,1}) \\
    (\phi_4, \mQ_{4,1}) 
   \end{array}
}_{\text{Reduce Stage}} 
\]
There are two types of outputs represented in the diagram above: (i)
those surrounded by curly braces $\{ \ldots \}$ are written to disk
for processing in the future and (ii) those surrounded by parentheses
$( \ldots )$ are used in the next stage.  The tag $\phi_i$ uniquely
identifies each \Map function.  The output from this first MapReduce
job is the matrix $\mR$. The identifiers and factors $(\phi_i,
\mQ_{i,1})$ are fed to the next MapReduce job that computes the matrix
$\mU$ of left singular vectors.  To form $\mU$, we follow the R-SVD
procedure described in~\cite{chan1982improved}.  We compute the SVD of
the small matrix $\mR = \mU_R \mSigma \mV^T$ on one node and store
$\mSigma$ and $\mV$ on disk. With another
\Map and \Reduce, we distribute the matrix $\mU_R$ to all tasks and 
combine it with both
the input $(\phi_i, \mQ_i)$ and the stored output
$(\phi_i, \mQ_{i,1})$ from the last stage. The
shuffle moves all data with the same key to the same reducer, which
uses the tags $\phi_i$ to align the blocks of $\mQ_i$ computed in
the first map stage with those output in the first reduce stage. Then
in the reduce, we read all outputs with the same tag $\phi_i$ and
compute the products $\mQ_i \mQ_{i,1} \mU_R$ to get $\mU_i$. The
picture is:
\[ \underbrace{
  \text{Distribute } \mU_R
}_{\text{Launch phase}} \; 
\underbrace{
  \begin{array}{l}
   (\phi_1, \mQ_1) \xrightarrow[\text{Map}]{\text{Iden.}}  
     (\underbrace{\;\,\, \vphantom{\mR_4 \phi_r}\phi_1 \;\,\, }%
    _{\mathclap{\text{Key}}}, \underbrace{\mQ_1}_{\text{Value}}) \\
   (\phi_1, \mQ_{1,1}) \xrightarrow[\text{Map}]{\text{Iden.}}  
     (\underbrace{\;\,\, \vphantom{\mR_4 \phi_r}\phi_1 \;\,\, }%
    _{\mathclap{\text{Key}}}, \underbrace{\mQ_{1,1}}_{\text{Value}}) \\
    \qquad \vdots \\
      (\phi_4, \mQ_4) \xrightarrow[\text{Map}]{\text{Iden.}}  
     (\underbrace{\;\,\, \vphantom{\mR_4 \phi_r}\phi_1 \;\,\, }%
    _{\mathclap{\text{Key}}}, \underbrace{\mQ_4}_{\text{Value}}) \\
   (\phi_4, \mQ_{4,1}) \xrightarrow[\text{Map}]{\text{Iden.}}  
     (\underbrace{\;\,\, \vphantom{\mR_4 \phi_r}\phi_1 \;\,\, }%
    _{\mathclap{\text{Key}}}, \underbrace{\mQ_{4,1}}_{\text{Value}}) \\ 
  \end{array}
}_{\text{Map Stage}}
\; 
 \begin{array}{l}
    (\phi_1, \{ \mQ_1, \mQ_{1,1} \}) \xrightarrow[\text{Red.}]{}  \\
     \quad	\{ (\text{row ids}, \mQ_1 \mQ_{1,1} \mU_R) \} = \{ \mU_1 \}  \\
    (\phi_2, \{ \mQ_2, \mQ_{2,1} \}) \xrightarrow[\text{Red.}]{}  \\
     \quad  \{ (\text{row ids}, \mQ_2 \mQ_{2,1} \mU_R) \} = \{ \mU_2 \}  \\
    (\phi_3, \{ \mQ_3, \mQ_{3,1} \}) \xrightarrow[\text{Red.}]{}  \\
     \quad	\{ (\text{row ids}, \mQ_3 \mQ_{3,1} \mU_R) \} = \{ \mU_3 \}  \\
    (\phi_4, \{ \mQ_4, \mQ_{4,1} \}) \xrightarrow[\text{Red.}]{}  \\
     \quad  \{ (\text{row ids}, \mQ_4 \mQ_{4,1} \mU_R) \} = \{ \mU_4 \}  \\    
  \end{array}    	
 \]
This is a numerically stable computation of $\mU$ in the SVD of $\mF$
stored in HDFS. For more details about the \Map and \Reduce functions
see~\cite{Benson-preprint-direct-tsqr}.  The codes for computing the
TSQR and SVD can be found at \url{github.com/arbenson/mrtsqr}.

\subsection{Evaluating the reduced-order model in MapReduce}
\label{sec:mrrom}
Next we describe the procedure for evaluating the ROM in MapReduce for
a given parameter value $s$. If one needs to evaluate the ROM at many
values of $s$, this can be done in parallel with our existing codes.

There are two steps involved in evaluating the ROM. The first step is
evaluating the interpolated function $\tilde{v}_k(s)$ at $s$. Since
$\mV$ is small, this step is executed on a single
node.  The second step is estimating the ROM prediction and its
variance via
\begin{equation} \label{eq:f-and-var}
\Exp{\tilde{f}(x_i, s)} \; = \; \sum_{k=1}^{R} u_{k}(x_i) \,\sigma_k \,
\tilde{v}_k(s), 
\qquad 
\Var{\tilde{f}(x_i, s)} \; = \sum_{k=R + 1}^{N} \sigma_k^2 \,u_k(x_i)^2.
\end{equation}
Recall that $R=R(s)$ is the splitting of the singular value expansion
at the point $s$ described in \ref{sec:chooseR}. Further, recall, that
the matrix $\mU$ computed in the SVD of $\mF$ holds the coefficients
$u_k(x_i)$. We can evaluate the ROM at all points $x_i$ by computing
the matrix-vector product:
\begin{equation} 
\vf(s) = \mU(:,1\mathrm{:}R) \,\tilde{\vv}(s),
\end{equation}
where 
\begin{equation}
\vf(s) = \bmat{\Exp{\tilde{f}(x_1,s)} \\ \vdots \\ \Exp{\tilde{f}(x_M,s)}},
\qquad
\tilde{\vv}(s) = \bmat{ \sigma_1\tilde{v}_1(s) \\ \vdots \\ \sigma_R\tilde{v}_{R}(s)}.
\end{equation}
Since the matrices $\mF$ and $\mU$ are tall, we can compute such
matrix-vector products in Hadoop by distributing the small vector $\tilde{\vv}$
to a \Map function that will compute a subset of the entries of the matrix-vector
product. A subsequent \Reduce collects each submatrix-vector
product into a single file. Viewed schematically for $\mU$ stored in four blocks,
\begin{equation} 
\underbrace{
  \text{Distribute } \tilde{\vv}(s)
}_{\text{Launch phase}}
  \quad
\underbrace{  
  \begin{array}{l} 
    \{ \mU_1 \} \xrightarrow[\text{Map}]{\text{}} 
      ( \makebox[2em][c]{$s$}, \vf_1(s),  \Var{\vf_1(s)}  )  \\
    \{ \mU_2 \} \xrightarrow[\text{Map}]{\text{}} 
      ( \makebox[2em][c]{$s$}, \vf_2(s),  \Var{\vf_2(s)}  )  \\
    \{ \mU_3 \} \xrightarrow[\text{Map}]{\text{}} 
      (\makebox[2em][c]{$s$}, \vf_3(s),  \Var{\vf_3(s)}  )  \\
    \{ \mU_4 \} \xrightarrow[\text{Map}]{\text{}} 
      ( \underbrace{\makebox[2em][c]{$s$}}_{\mathclap{\text{Key}}}, 
        \underbrace{\vf_4(s),  \Var{\vf_4(s)}}_{\text{Value}} )  \\
  \end{array}
}_{\text{Map Stage}} 
\end{equation}
\begin{equation}
\underbrace{
\begin{array}{c} 
  \Bigg( s, 
     \begin{array}{ll} 
      \vf_1(s), \vf_2(s), \vf_3(s), \vf_4(s), \\
      \Var{\vf_1(s)}, \Var{\vf_2(s)}, \\ \Var{\vf_3(s)} , \Var{\vf_4(s)} 
      \end{array}
    \Bigg) \xrightarrow[\text{Reduce}]{\text{Join}} (\vf(s), \Var{\vf(s)} )
\end{array}    
}_{\text{Reduce Stage}}
\end{equation}
Although we illustrate this function with a single interpolation point
$s$, which results in a single reduce with one key, our
implementations are designed to handle around one-to-two thousand points $s$
simultaneously. Thus, we actually distribute $s$ and $\tilde{\vv}(s)$
for all of the one-to-two thousand points simultaneously. In this case, we would
have one reducer for each value $s$. 
Our codes for manipulating the simulations and performing the
interpolation are found at \url{github.com/dgleich/simform} in the
branch \texttt{simform-sisc}.

\begin{figure}
\begin{minipage}[t]{0.48\linewidth}\footnotesize%
\textbf{Map}(\emph{key}$=$ row id, \emph{value}$=$ $\vu_i^T$) \\ For
each $s$ and $\tilde{\vv}(s)$ that were distributed, emit the index of
the value of $s$ as the \emph{key} and the value as $\vu_i^T
\tilde{\vs}$ and $\Var{\vu_i^T \tilde{\vv}(s)}$ based on
equation~\eqref{eq:f-and-var}.
\end{minipage}
\hfill
\begin{minipage}[t]{0.48\linewidth}\footnotesize%
\textbf{Reduce}(\emph{key}$=$ $s$, \emph{values}$=$ ROM
evaluation)\\ Assemble the ROM predictions and confidence measure into
a single output and store that on disk.
\end{minipage}
\caption{Map and Reduce functions to compute our interpolants}
\end{figure}


\section{Numerical experiment}
\label{sec:exp}
In this section we apply the model reduction method to a parameter
study with a large-scale heat transfer model in random heterogeneous
media. In what follows, we describe the physical model, the parameter
study, and the construction of the ROM. We compare the ROM's
predictions for the quantity of interest with a standard response
surface approach. We close this section with some remarks on the
computational issues we encountered working with four terabytes of
data from the simulations.

\subsection{Heat transfer model}
We consider a partial differential equation model for bulk conductive
heat transfer of a temperature field $T=T(x,t)$,
\begin{equation}
\frac{\partial}{\partial t} (\rho c_p T) \;=\; \nabla\cdot(\kappa \,\nabla T), 
\qquad (x,y,z)\in\mathcal{D},\quad t\in [0,t_f].
\end{equation}
The spatial domain $\mathcal{D}$ is a rectangular brick of size
$20\times 10\times 10\,\mathrm{cm}^3$ centered at the origin.
The brick contains two materials roughly corresponding to foam
($\rho=319\,\mathrm{kg/m}^{3}$) and stainless steel
($\rho=7900\,\mathrm{kg/m}^3$) chosen for their contrast in thermal
conductivity. The values for temperature-dependent specific heat $c_p$
and conductivity $\kappa$ are shown in Tables \ref{t:cap} and
\ref{t:cond} in the appendix. The distribution of these two materials
within the brick is realized by the following procedure: (i) set the
material in the entire brick to steel, (ii) choose 128 locations
within the brick uniformly at random, (iii) for each location, find
all points in the brick within a given radius $s$ and set the material
to foam. The result is a steel brick with randomly distributed,
potentially overlapping foam bubbles of radius $s$.

A given brick begins ($t=0$) at room temperature
$T=298\,^{\circ}\mathrm{K}$ with a Dirichlet boundary condition of
$1000\,^{\circ}\mathrm{K}$ on the face $x=10\,\mathrm{cm}$. The
temperature field advances to the final time $t_f=2000\,\mathrm{s}$,
and the quantities of interest are measured at the face opposite the
prescribed boundary condition (the \emph{far face}) at
$x=-10\,\mathrm{cm}$. We are interested in two quantities: (i) the
average temperature on the far face and (ii) the proportion of the
temperature on the far face that exceeds $475\,^{\circ}\mathrm{K}$.

The finite element simulation uses Sandia Labs' SIERRA Multimechanics
Module: Aria~\cite{sand07-2734} with a regular mesh of $256\times
128\times 128$ elements constructed with CUBIT~\cite{Blacker1994}. One
simulation takes approximately four hours on a 8-core node of Sandia's
Red Sky capacity cluster (Dual Xeon 5500-series, 2.93 GHz, 2GB per
core).  The simulation outputs contain the full temperature field at
nine uniformly spaced times between $t=1200\,\mathrm{s}$ and
$t_f=2000\,\mathrm{s}$, which are stored in the Exodus II binary file
format. Each simulation output file is approximately 500MB. Two
representative temperature fields are shown in Figure \ref{fig:brick}.

\begin{figure}[t]
\centering
\subfloat[]{
\includegraphics[width=0.45\linewidth]{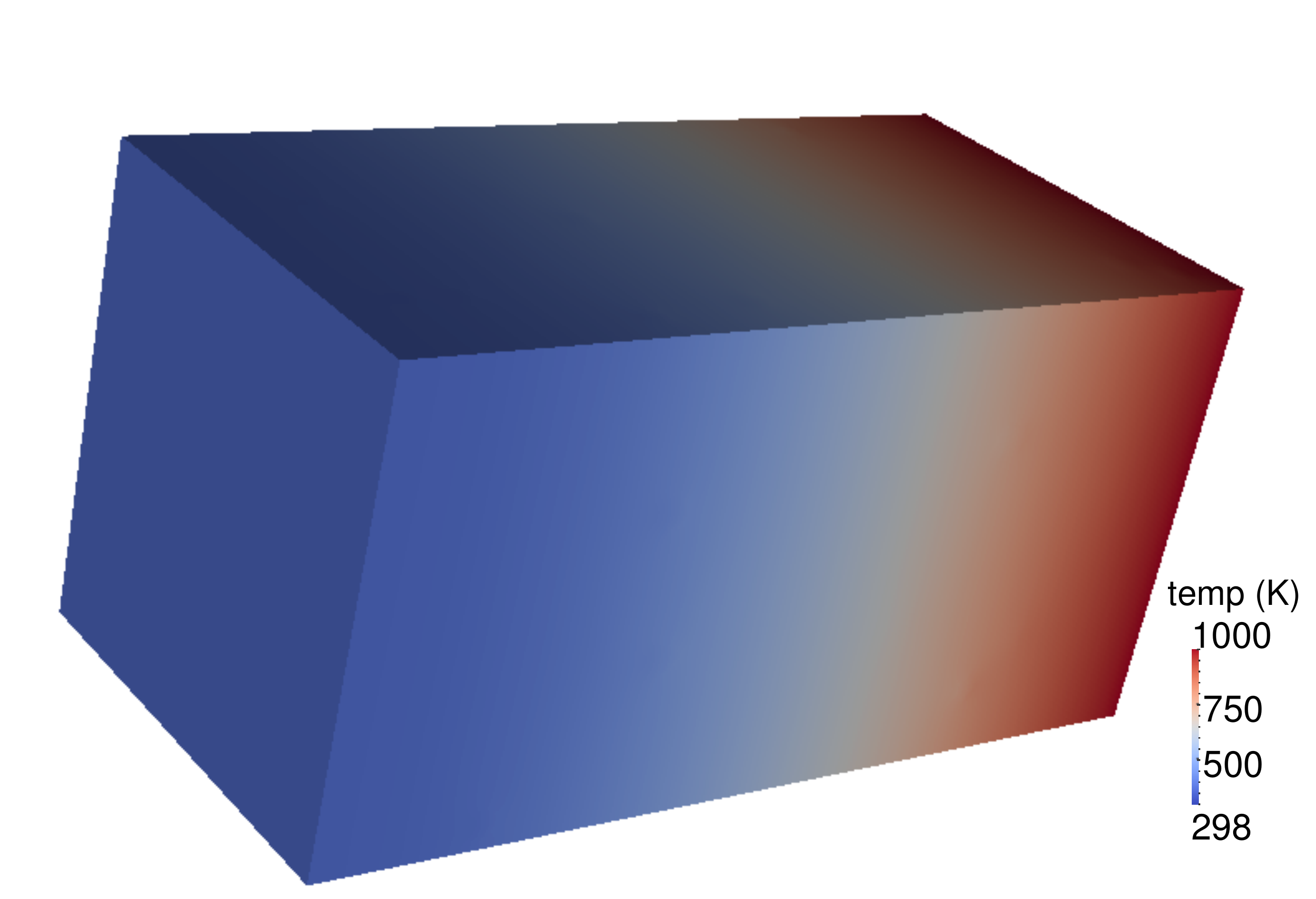}
}\;
\subfloat[]{
\includegraphics[width=0.45\linewidth]{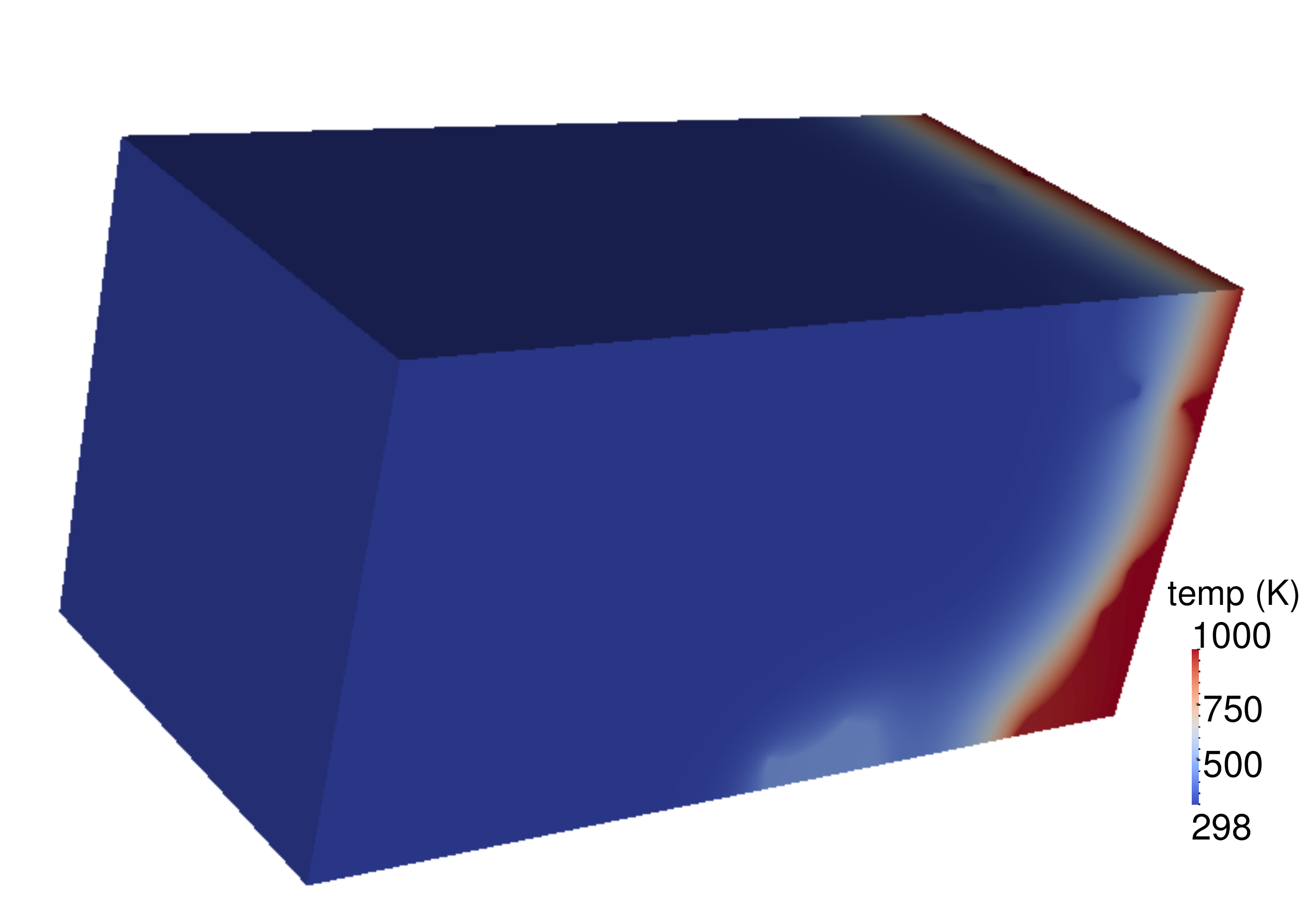}
}
\caption{Representative temperature fields at time
  $t_f=2000\,\mathrm{s}$ for bubble radius $s=0.390\,\mathrm{cm}$
  (left) and $s=2.418\,\mathrm{cm}$ (right). Distortions in the
  temperature field due to the presence of bubbles are clearly visible
  in the latter. (Colors are visible in the electronic version.) }
\label{fig:brick}
\end{figure} 

\subsection{Parameter study}
\label{sec:uqstudy}
We use the heat transfer model to study the effects of the bubble
radius parameter $s$ on the temperature distribution on the far face
via the quantities of interest. Intuitively, as $s$ increases, more of
the brick becomes foam, and we expect a lower temperature on the far
face $x=-10\,\mathrm{cm}$ due to foam's lower conductivity.

To address the variability in random media, we choose 128 random
realizations of the 128 locations for the bubble centers. For a given
radius $s$, we run 128 simulations---one for each realization of the
bubble locations. We use these simulations to compute Monte Carlo
estimates of the mean of each quantity of interest.  Note that there
are 384 random variables (three components per location)
characterizing the locations of the bubbles, so Monte Carlo estimates
are the only feasible option.

For each realization of the bubble locations, we run 64 simulations
varying $s$ uniformly between 0.039 cm and 2.496 cm. This results in a
total of 8192 simulations---128 bubble realizations $\times$ 64 values
for $s$. For each simulation, we compute the two quantities of
interest: (i) the average temperature over the far face, and (ii) the
proportion of the far face temperature that exceeds
$475\,^{\circ}\mathrm{K}$. We then approximate the mean over the
bubble realizations with Monte Carlo. Finally, we plot the estimates
of the mean as a function of the radius $s$. With 128
realizations, the 95\% confidence intervals are within 1\% of the
mean, so we do not plot them. These results are shown in Figure
\ref{fig:uq}. As expected, the propagation of the temperature
decreases as $s$ increases. However, the decrease is qualitatively
different for the two quantities of interest: the average far face
temperature decreases smoothly as a function of $s$ while the
proportion of the temperature above a threshold has a dramatic change
as a function of $s$. In the next section, we test the ability of the
reduced-order model to reproduce these results.

\begin{figure}[t]
\centering
\subfloat[]{
\includegraphics[width=0.45\linewidth]{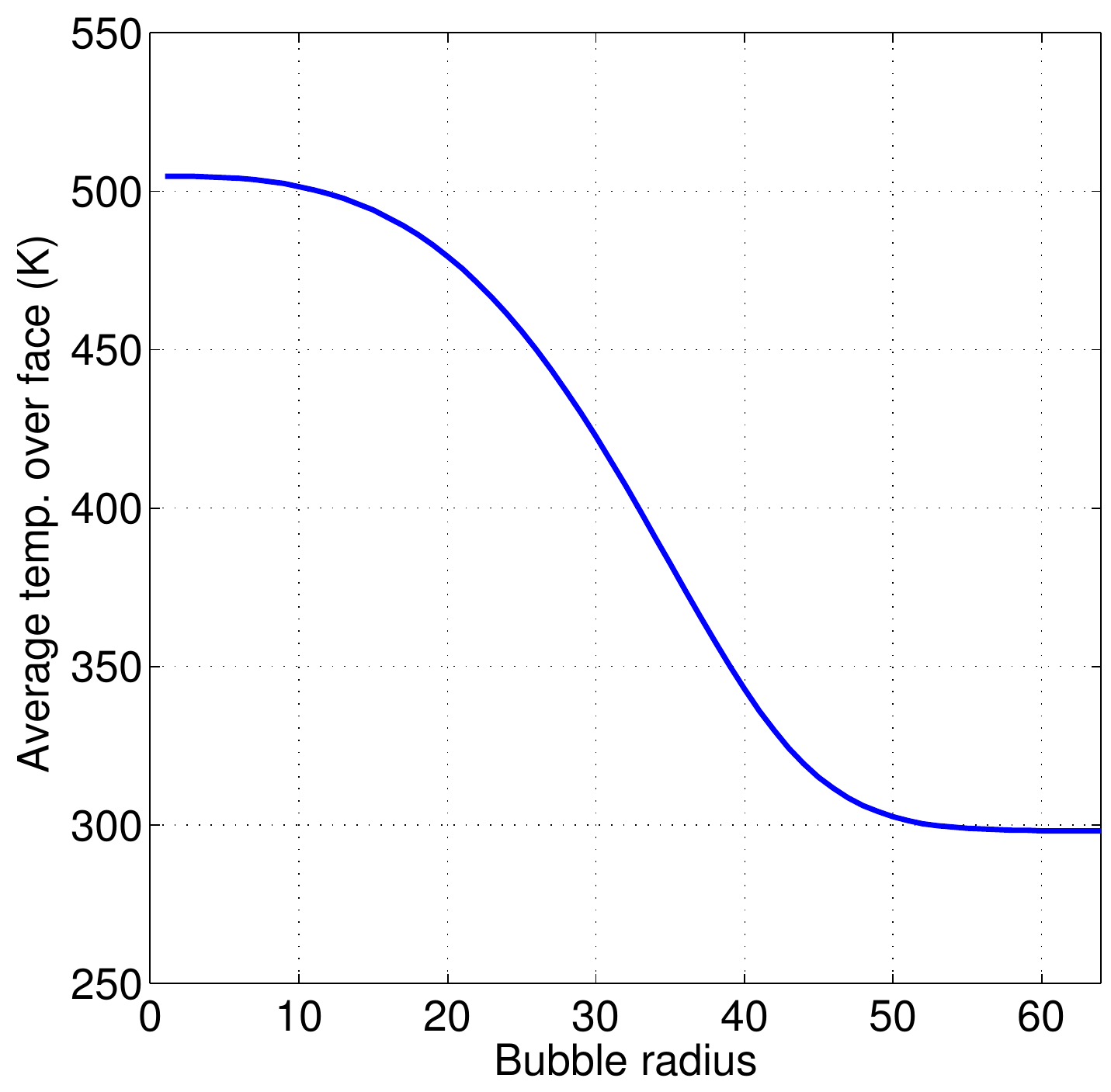}\label{fig:meantemp}
}\;
\subfloat[]{
\includegraphics[width=0.45\linewidth]{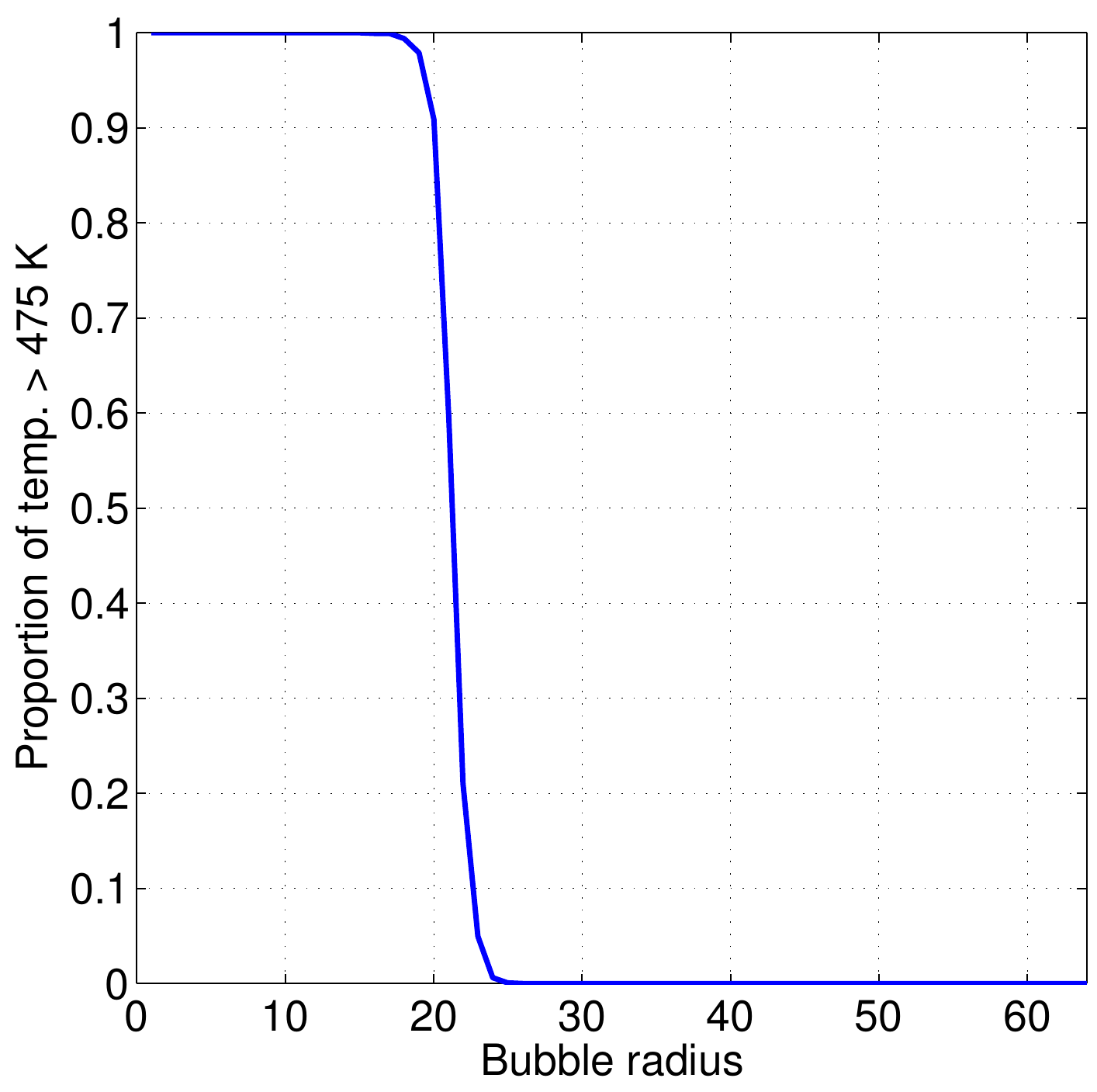}\label{fig:proptemp}
}
\caption{Monte Carlo estimates of the mean of the average far face
  temperature as a function of the bubble radius (left). The mean of
  the proportion of the temperature that exceeds $475\,^{\circ}$ K as
  a function of the bubble radius. (Colors are visible in the
  electronic version.) }
\label{fig:uq}
\end{figure} 

\subsection{Approximating the SVE}
Before testing the ROM, we use all of the simulation data to study the
components of the SVE of the parameterized temperature field. In the
notation of Section \ref{sec:rom}, $f=f(x,s)$ now carries the
following interpretation: $f$ is the temperature; $x$ contains the
three spatial coordinates, the time coordinate, and an identifier for
the realization of the random bubble locations; and $s$ is the bubble
radius. The matrix $\mF$ from \eqref{eq:fmat} has 64 columns
corresponding to the 64 values of the bubble radius.  Each column
contains 128 independent simulations---one for each realization of the
bubble locations. This translates to 4926801024 rows (257 $x$ nodes
$\times$ 129 $y$ nodes $\times$ 129 $z$ nodes $\times$ 9 times
$\times$ 128 bubble locations), which is approximately 2.3 terabytes
of data.

The singular values normalized by the largest singular value and the
first eight right singular vectors scaled by the singular values are
shown in Figure \ref{fig:tempsvd} with the blue x's. The rapid decay
of the singular values indicates the tremendous correlation amongst
components of the temperature fields as the radius varies. More
importantly, we see the more rapid increase in the oscillations of the
right singular vectors for larger values of $s$. This reflects the
fact that temperature fields with similar bubble radii have larger
differences for larger values of the radius $s$. This also means we
expect a more accurate ROM for smaller values of $s$.

We computed this SVD and all other MapReduce-based analyses of this
dataset on a 10-node Hadoop cluster at Stanford University's Institute
for Computational and Mathematical Engineering. Each node in the
cluster has 6 2TB hard drives, one Intel Core i7-960, and 24 GB of
RAM. The 2.3 TB matrix took approximately 12 hours. Below, we discuss
the time required for additional pre- and post-processing work.

\subsection{Contruction and validation of the ROM}
\label{sec:romexample}
We use a subset of the simulations as the training set for the
ROM. In particular, we choose $s_j = 0.039\,j$ for $j=1,5,9,\dots,61$
as the values of $s$ whose corresponding simulations are used to build
the ROM. Thus, the matrix $\mF$ for constructing the ROM contains 16
columns and the same roughly five billion rows, i.e., the same
locations in the domain of space, time, and the locations of the
bubble centers. This matrix contains approximately 600 GB of data, and
the SVD step took approximately 8 hours on the Hadoop cluster.
The singular values normalized by the largest singular value and the
components of the first eight right singular vectors scaled by their
respective singular values of $\mF$ are plotted in Figure
\ref{fig:tempsvd} with red o's.

\begin{figure}[ht]
\centering
\subfloat[Singular values]{
\includegraphics[width=0.3\linewidth]{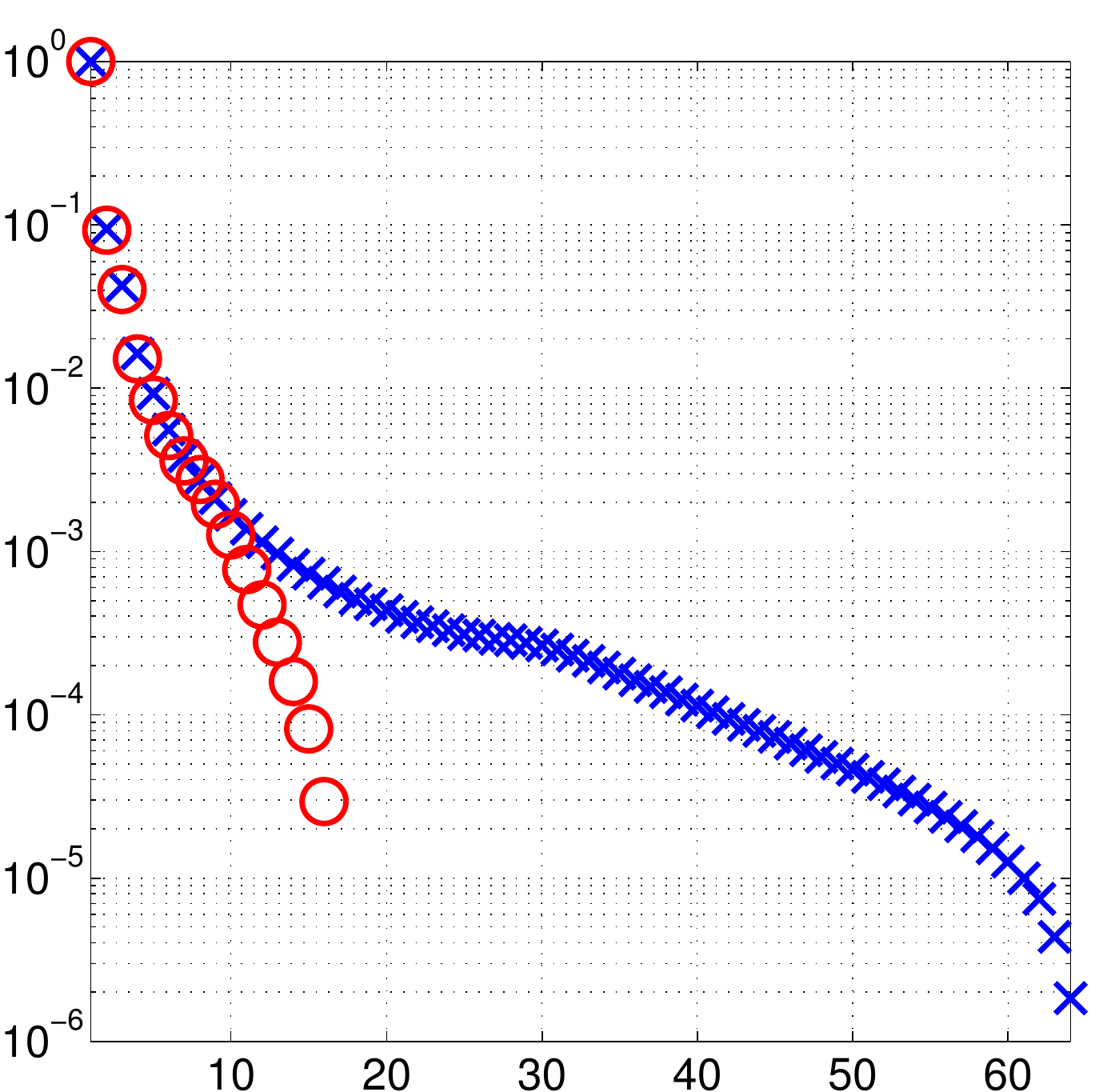}
}\;
\subfloat[$\sigma_1 v_1(s)$]{
\includegraphics[width=0.3\linewidth]{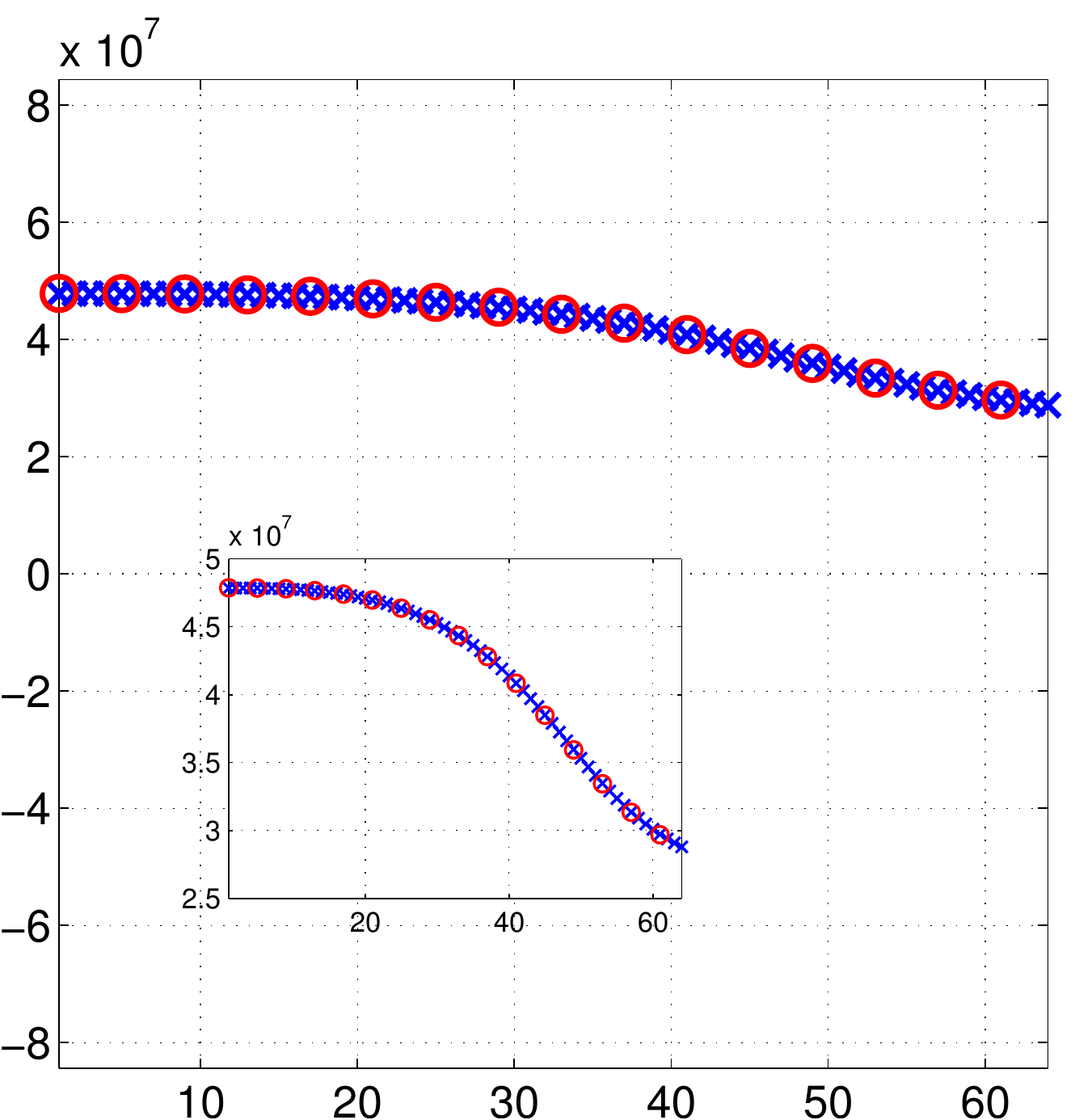}\label{fig:svals-rm}
}\;
\subfloat[$\sigma_2 v_2(s)$]{
\includegraphics[width=0.3\linewidth]{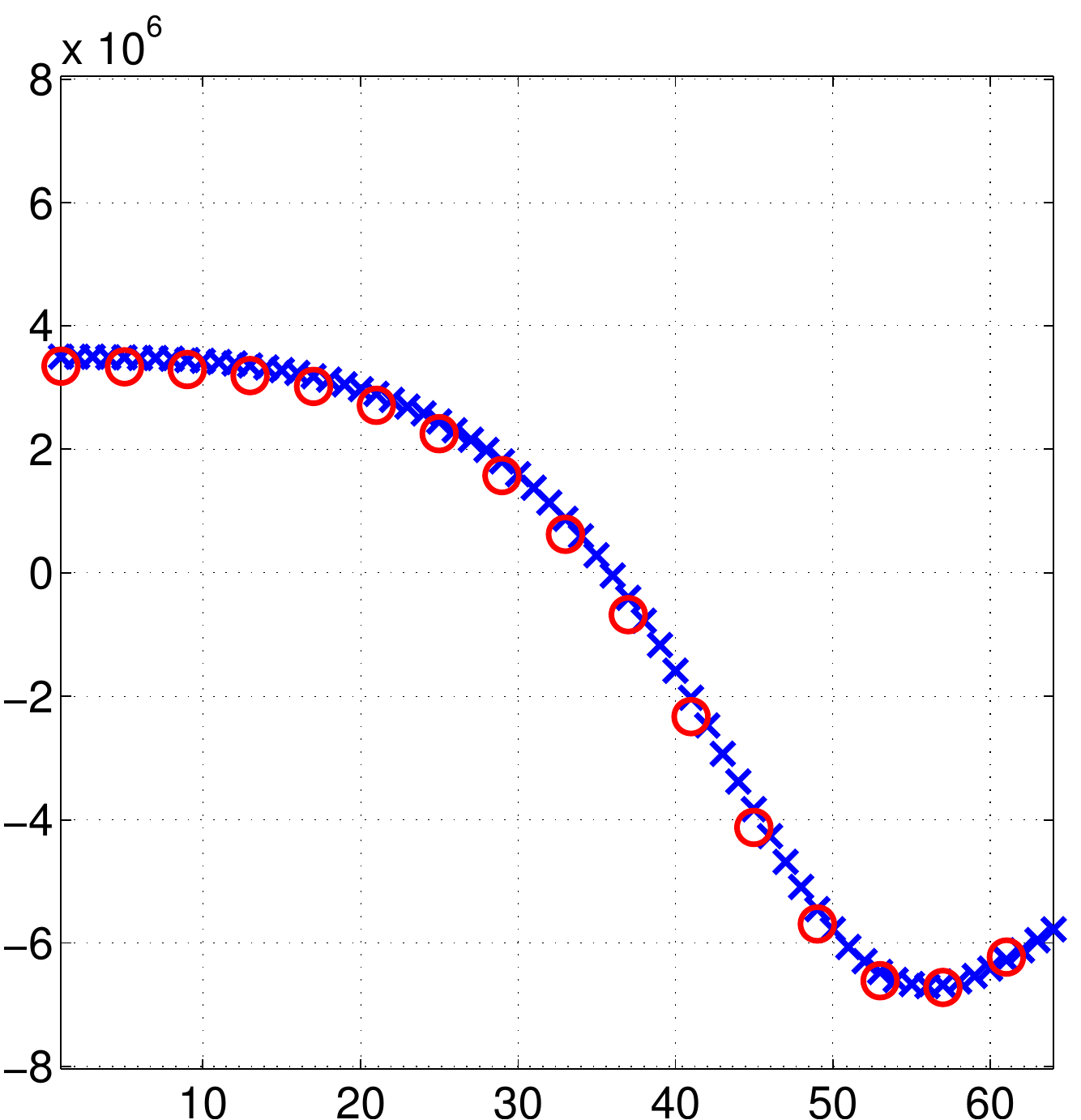}
}\\
\subfloat[$\sigma_3 v_3(s)$]{
\includegraphics[width=0.3\linewidth]{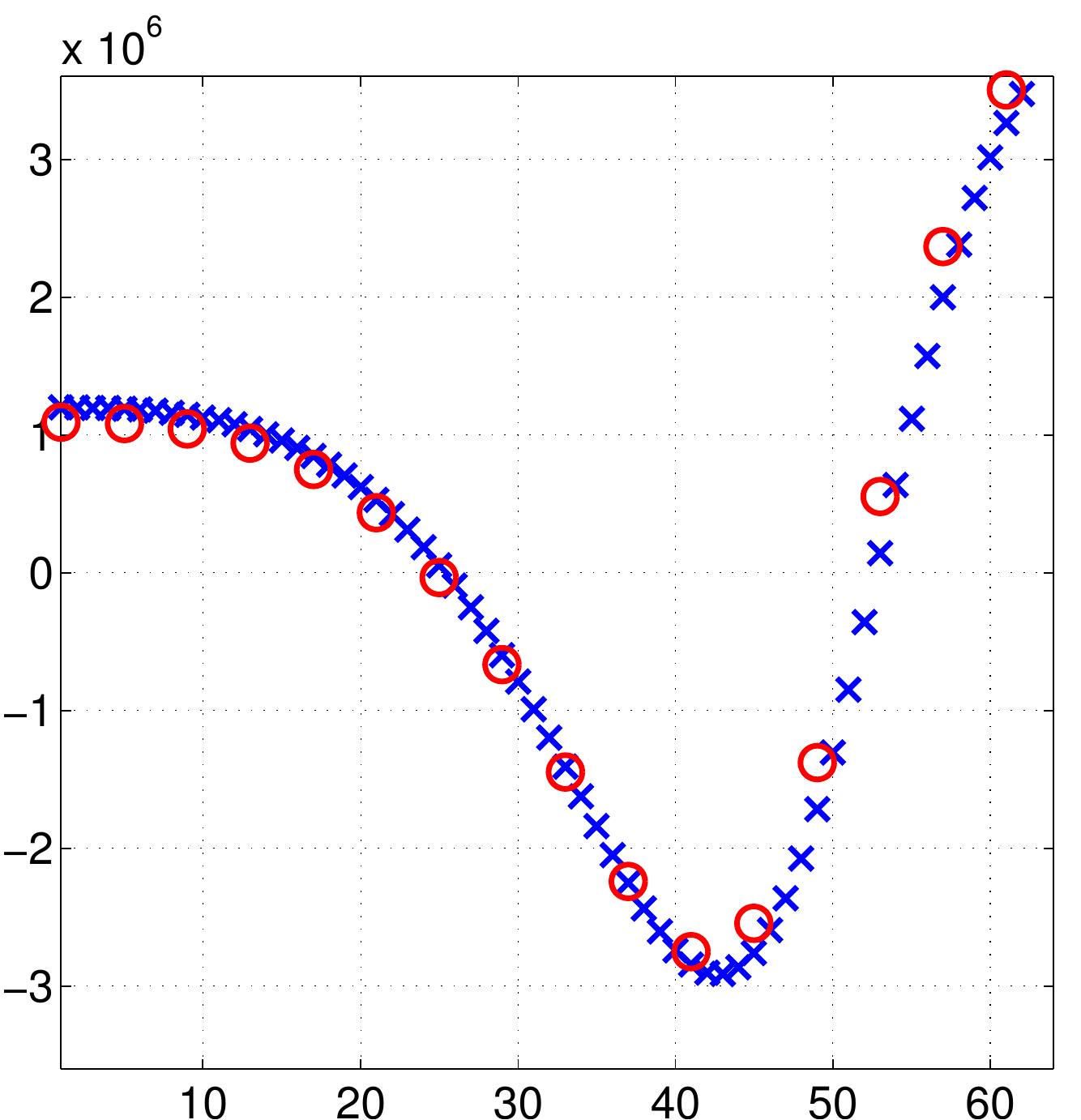}
}\:
\subfloat[$\sigma_4 v_4(s)$]{
\includegraphics[width=0.3\linewidth]{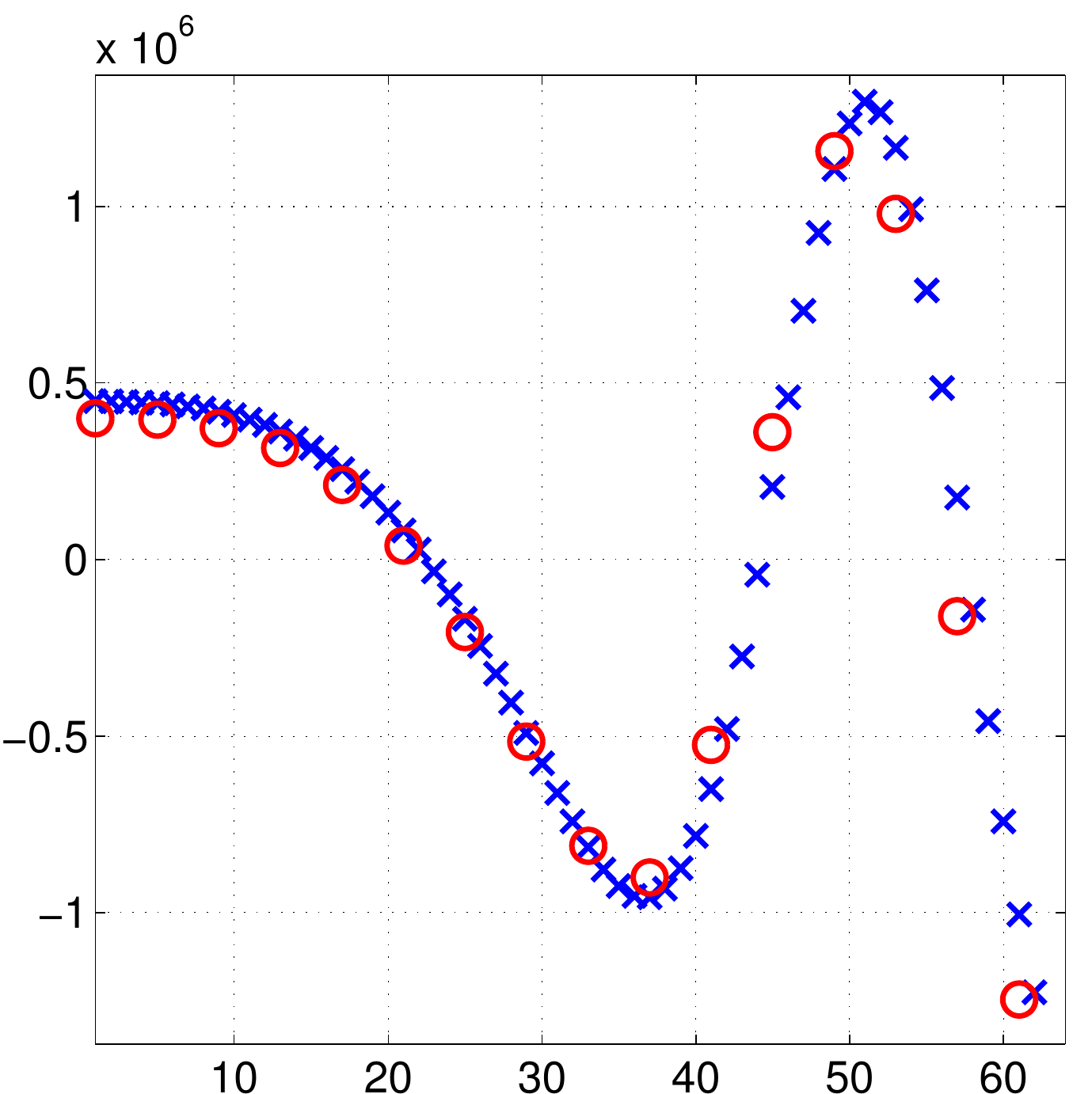}
}\;
\subfloat[$\sigma_5 v_5(s)$]{
\includegraphics[width=0.3\linewidth]{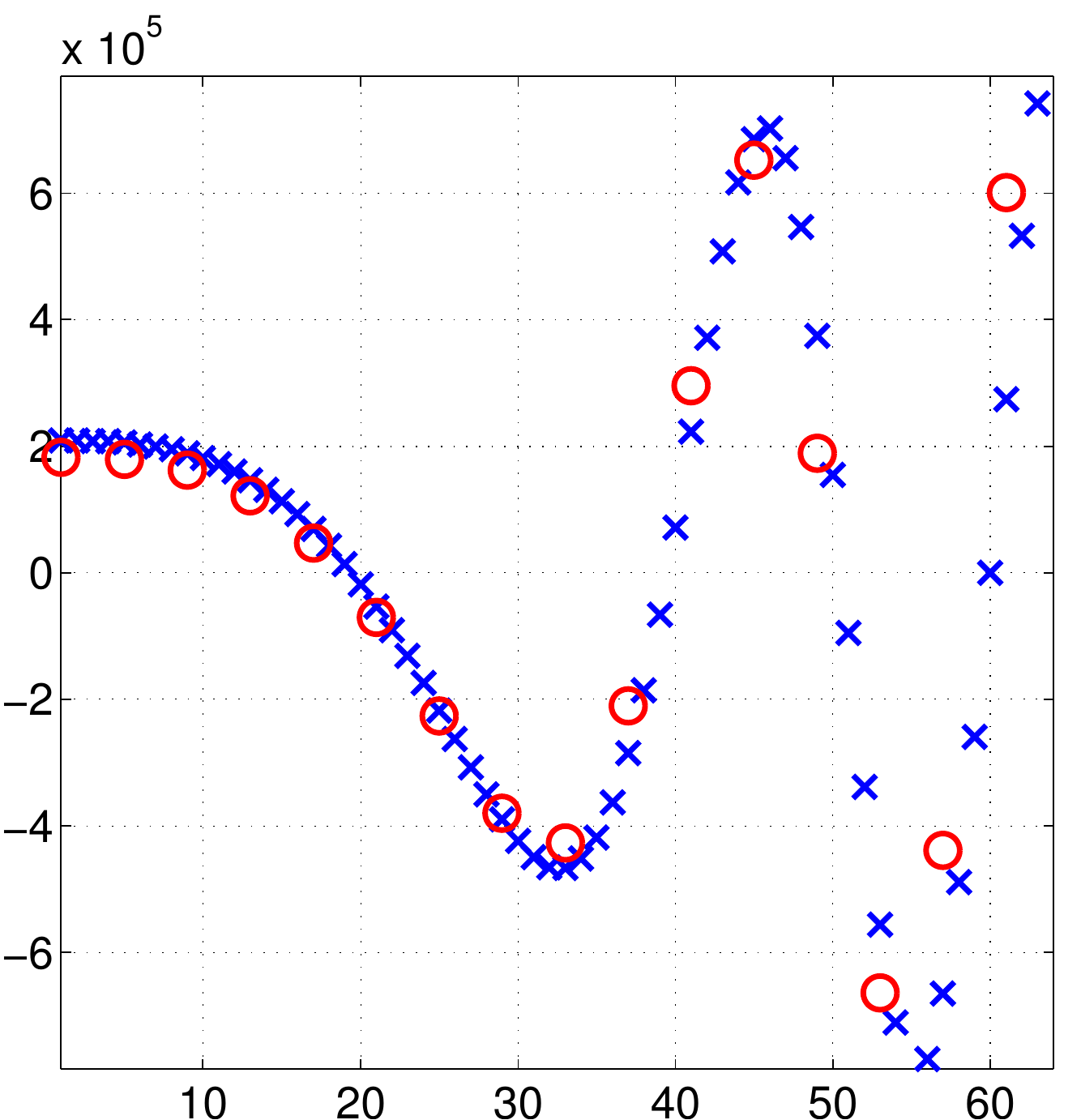}
}\\
\subfloat[$\sigma_6 v_6(s)$]{
\includegraphics[width=0.3\linewidth]{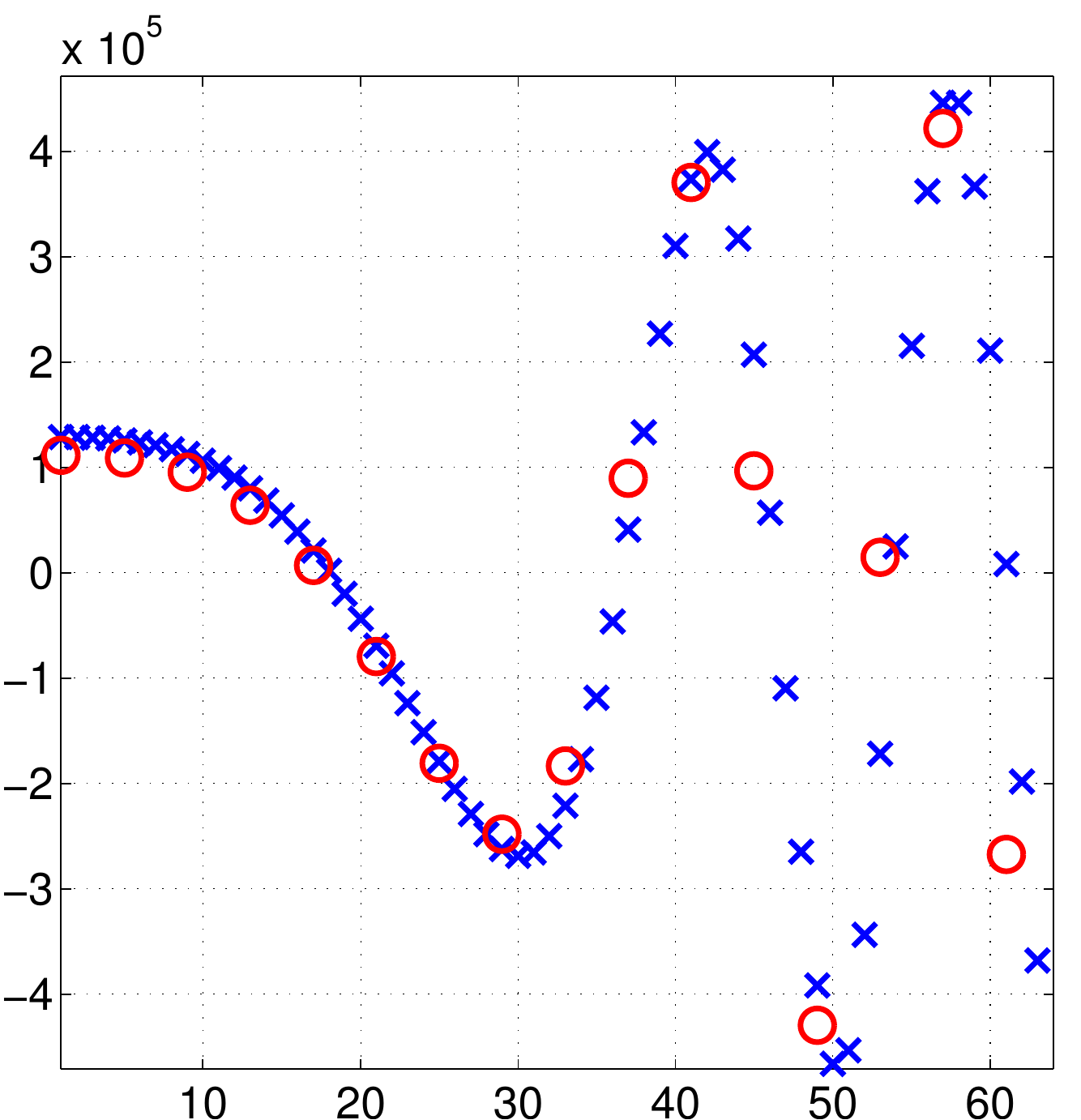}
}\;
\subfloat[$\sigma_7 v_7(s)$]{
\includegraphics[width=0.3\linewidth]{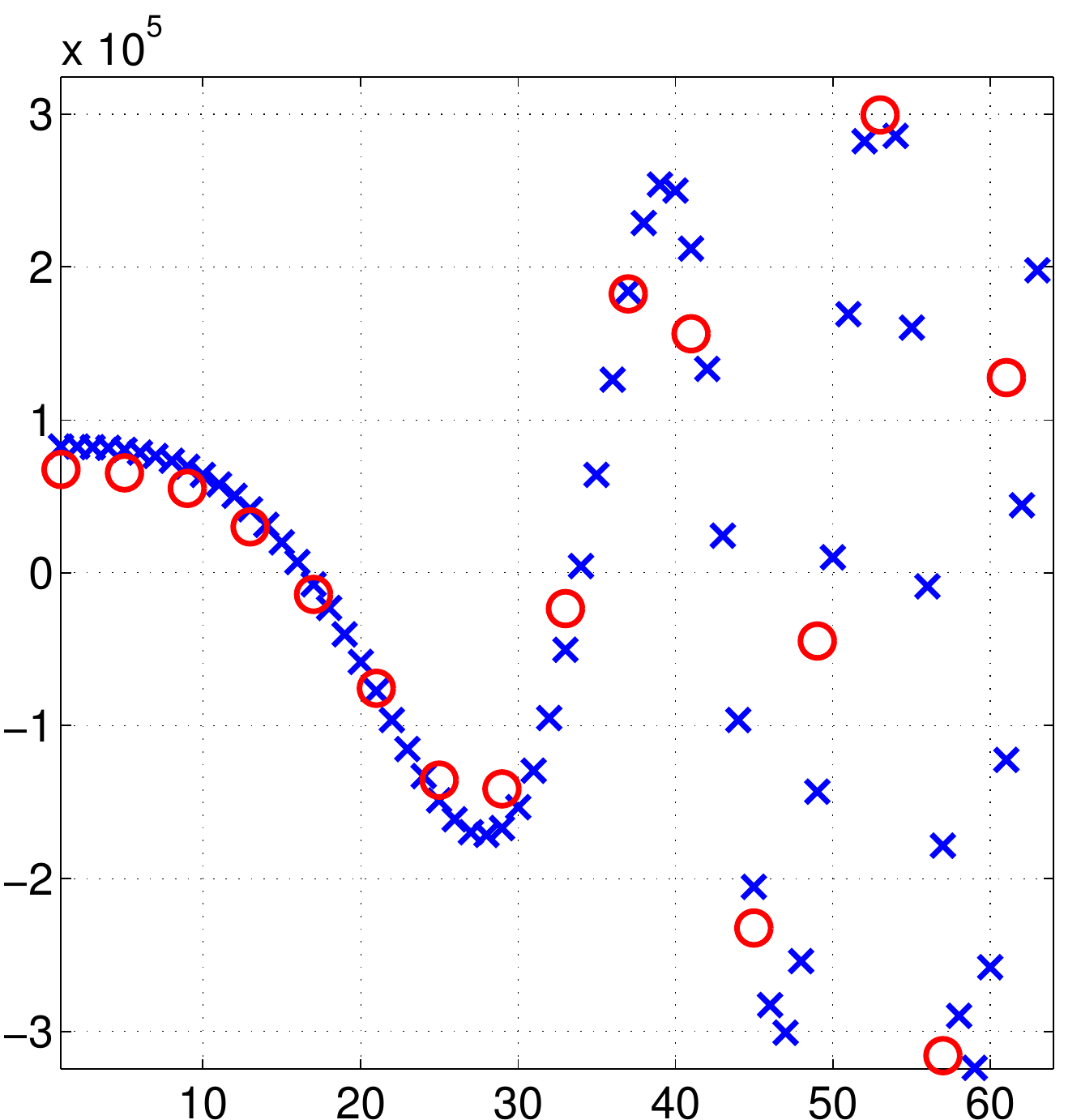}
}\;
\subfloat[$\sigma_8 v_8(s)$]{
\includegraphics[width=0.3\linewidth]{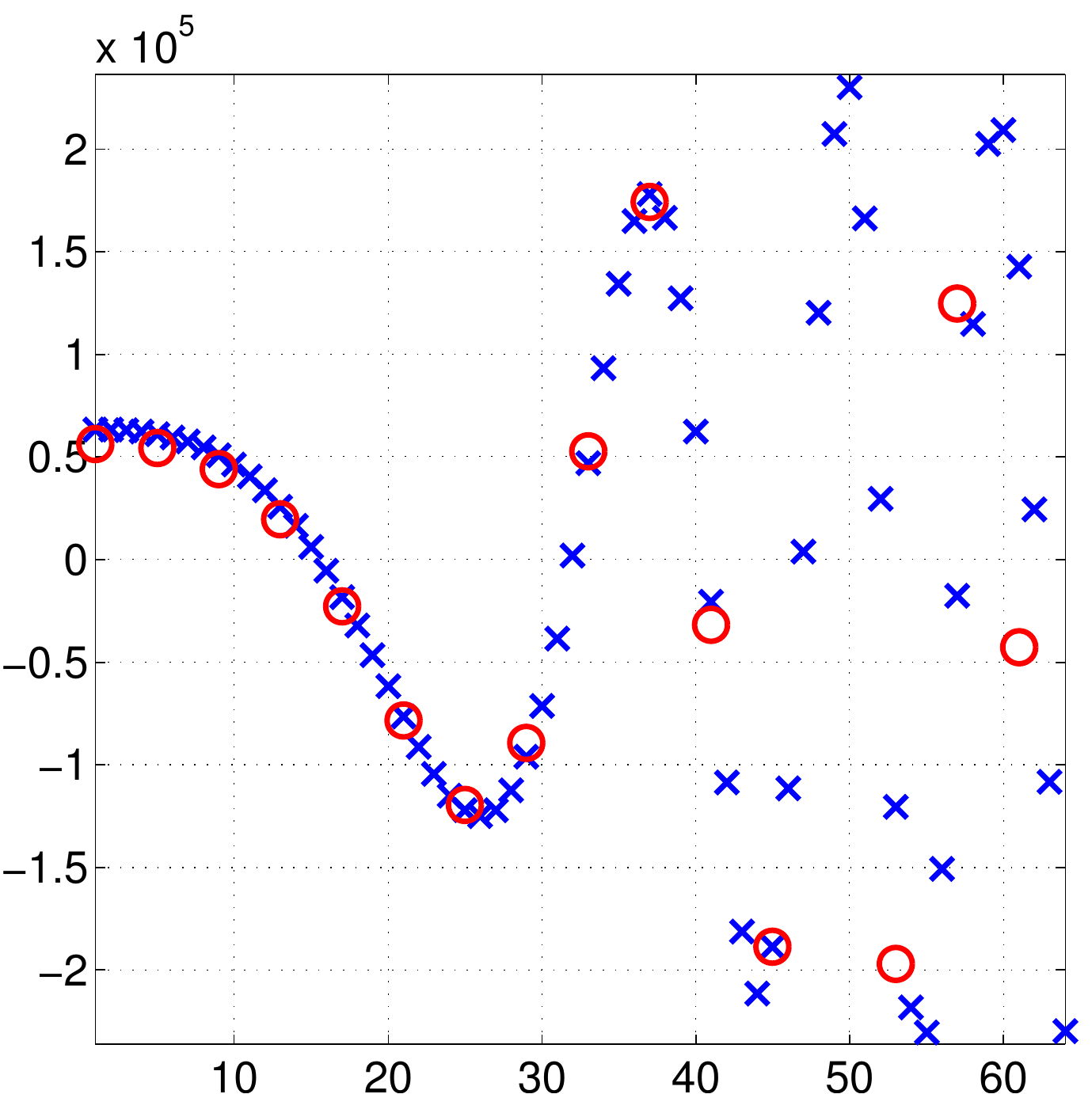}
}
\caption{The top left figure shows the singular values of a finely
  resolved $\mF$ with 64 columns (blue x's) and a coarsely resolved
  $\mF$ with 16 columns (red o's). Each are scaled by the maximum
  singular value from each set. The remaining figures show the
  approximations of the singular functions $v_1(s)$ through $v_8(s)$
  scaled by the respective singular values for $\mF$ with 64 columns
  (blue x's) and 16 columns (red o's). (Colors are visible in the
  electronic version.) }
\label{fig:tempsvd}
\end{figure} 

To choose the threshold $\bar{\tau}$ that defines the splitting
described in Section \ref{sec:chooseR}, we use a subset of the
simulations as a testing set. In particular, we choose $s_j =
0.039\,j$ with $j=3,7,11,\dots,59$ as the values of $s$ whose
simulations we will use for testing. Note that these correspond to the
midpoints of the intervals defined by the values of $s$ used for
training. Figure \ref{fig:error-train} shows the errors as a function
of the bubble radius $s$ and the variation threshold
$\bar{\tau}$. After $\bar{\tau}=0.55$, the approximation does not
improve with more terms (i.e., larger $R$ in \eqref{eq:rommodel}), so
we choose $\bar{\tau}=0.55$ since we want a ROM with the fewest terms.
Table \ref{tab:thresh-train} displays the splitting $R$ and the
associated error $\sE$ for the different values of $s$ using variation
threshold $\bar{\tau}$.

\begin{figure}
\begin{minipage}[b]{0.49\textwidth}
  \centering \includegraphics[width=0.9\linewidth]{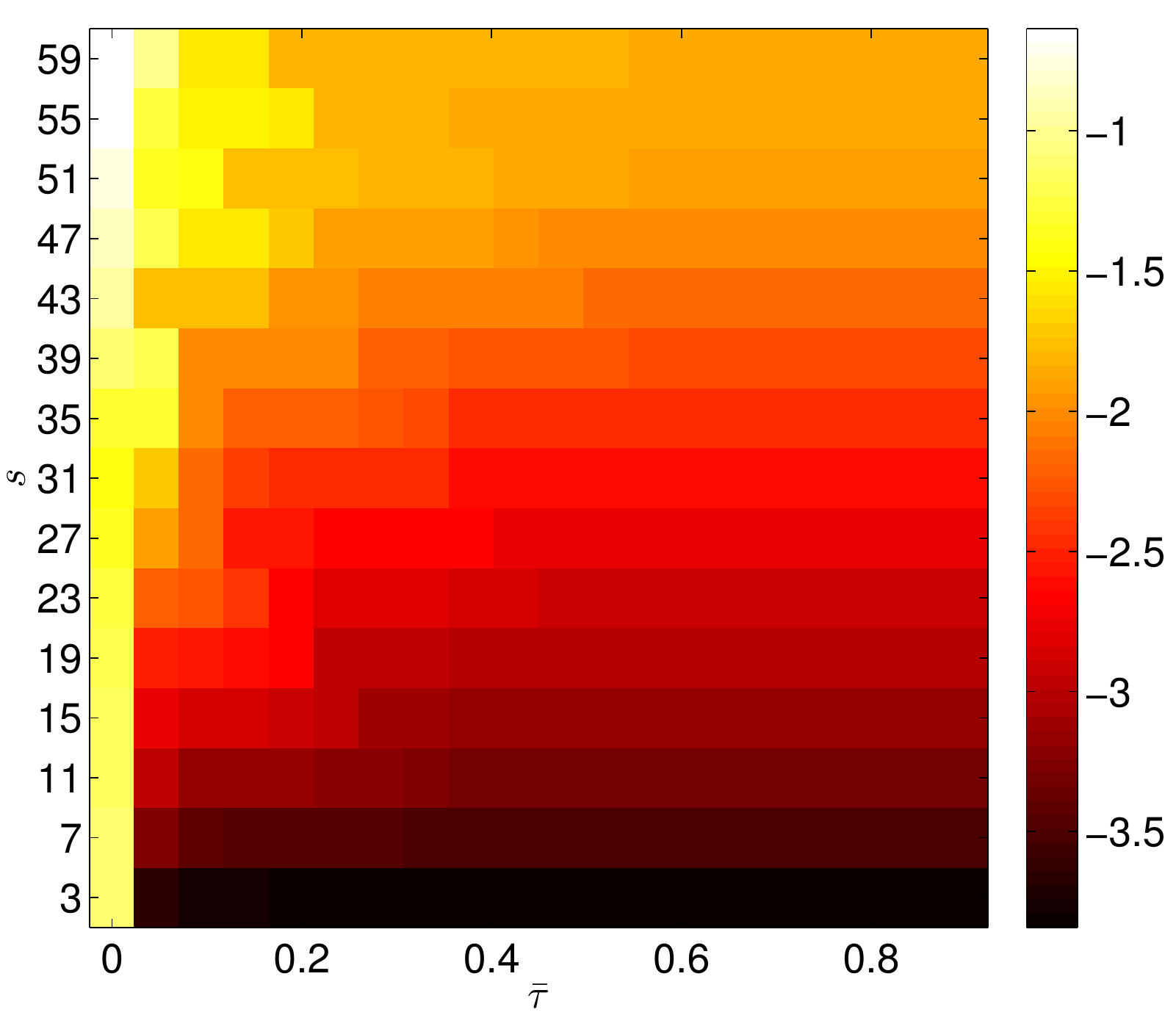}%
  \captionof{figure}{The log of the relative error in the mean
    prediction of the ROM as a function of $s$ and the threshold
    $\bar{\tau}$. (Colors are visible in the electronic version.)}
  \label{fig:error-train}
\end{minipage}
\hfill
\begin{minipage}[b]{0.49\textwidth}
\centering
\begin{tabular}{c|c|c}
$s$ & $R(s,\bar{\tau})$ & $\sE(s,\bar{\tau})$\\
\hline
0.08 & 16 & 1.00e-04\\ 
0.23 & 15 & 2.00e-04\\
0.39 & 14 & 4.00e-04\\
0.55 & 13 & 6.00e-04\\
0.70 & 13 & 8.00e-04\\
0.86 & 12 & 1.10e-03\\
1.01 & 11 & 1.50e-03\\
1.17 & 10 & 2.10e-03\\ 
1.33 & 9 & 3.10e-03\\
1.48 & 8 & 4.50e-03\\ 
1.64 & 8 & 6.50e-03\\ 
1.79 & 7 & 8.20e-03\\
1.95 & 7 & 1.07e-02\\
2.11 & 6 & 1.23e-02\\ 
2.26 & 6 & 1.39e-02
\end{tabular}
\captionof{table}{The split and the corresponding ROM error for
  $\bar{\tau}=0.55$ and different values of $s$.}
\label{tab:thresh-train}
\end{minipage}
\end{figure}

Finally, we visually compare the error in the ROM with the space-time
varying confidence measure.  Figure \ref{fig:romvar} displays the ROM
error and the confidence measure at the final time $t_f$ for one
realization of the bubble locations and two values of the bubble
radius, $s=0.39\,\mathrm{cm}$ and $s=1.95\,\mathrm{cm}$. Both measures
are larger near the bubble boundaries and larger near the face
containing the heat source. Visualizing these measures enables such
qualitative observations and comparisons.

\begin{figure}[t]
\centering
\subfloat[Error, $s=0.39$ cm]{
\includegraphics[width=0.45\linewidth]{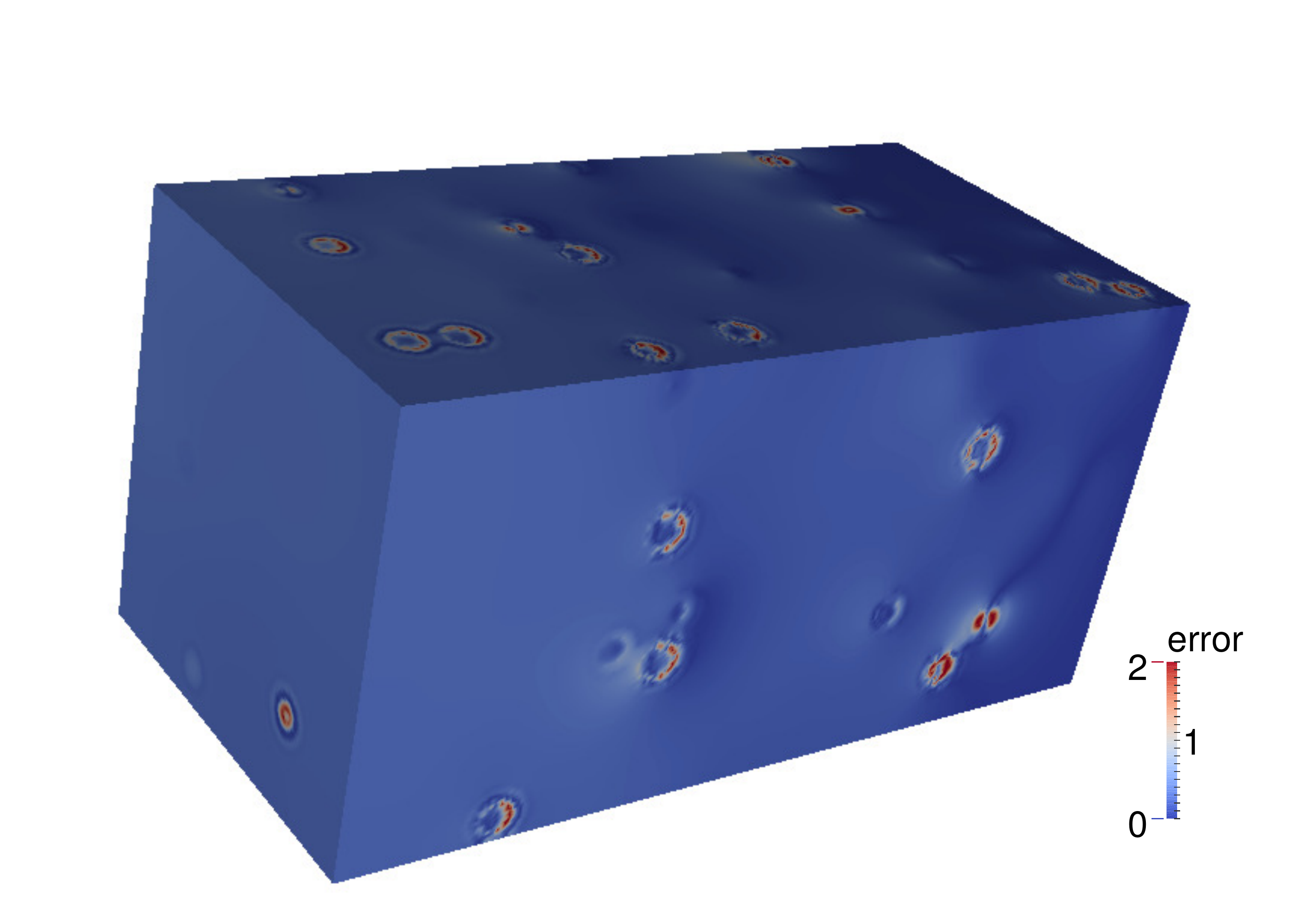}
}\;
\subfloat[Std, $s=0.39$ cm]{
\includegraphics[width=0.45\linewidth]{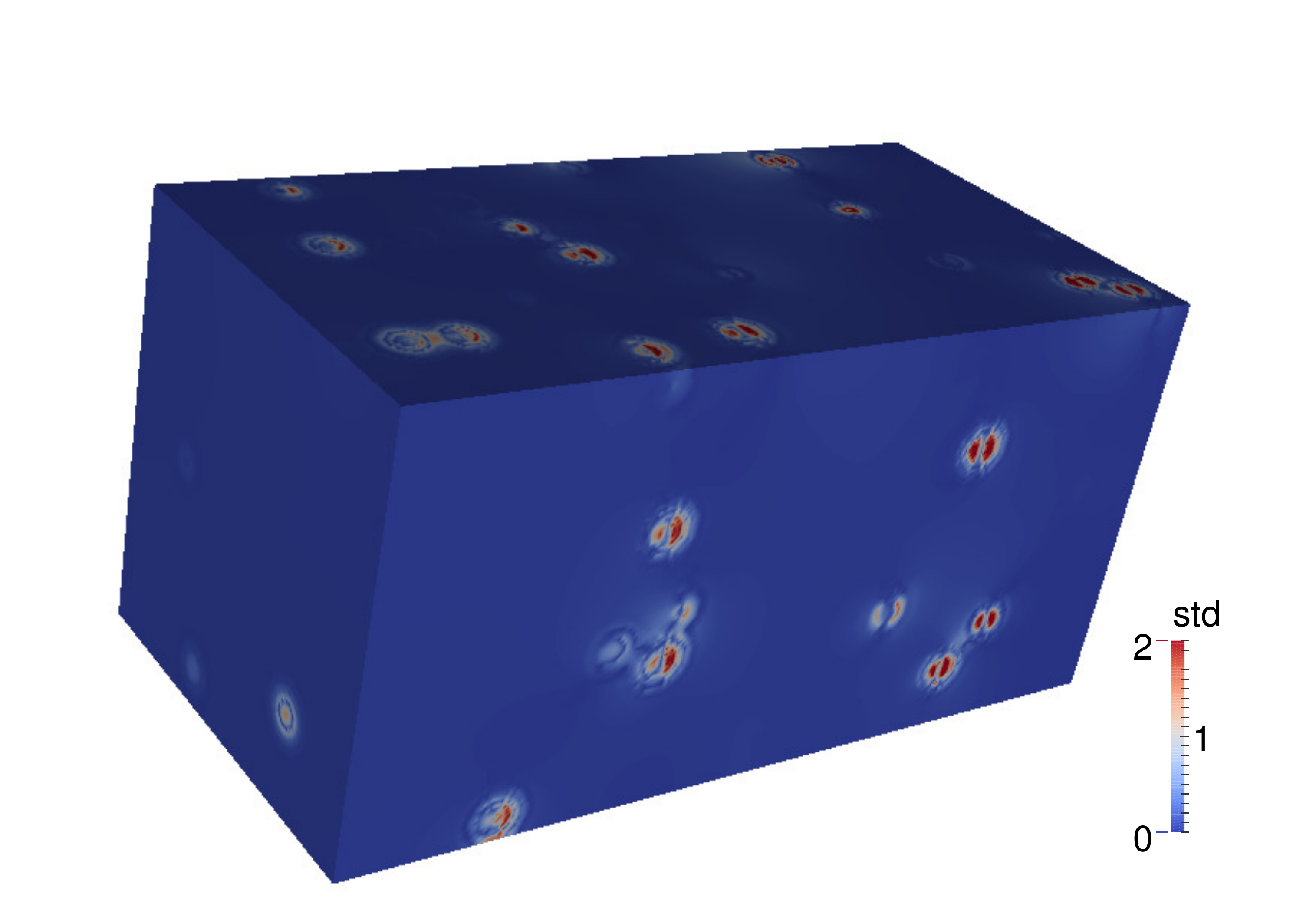}
}\\
\subfloat[Error, $s=1.95$ cm]{
\includegraphics[width=0.45\linewidth]{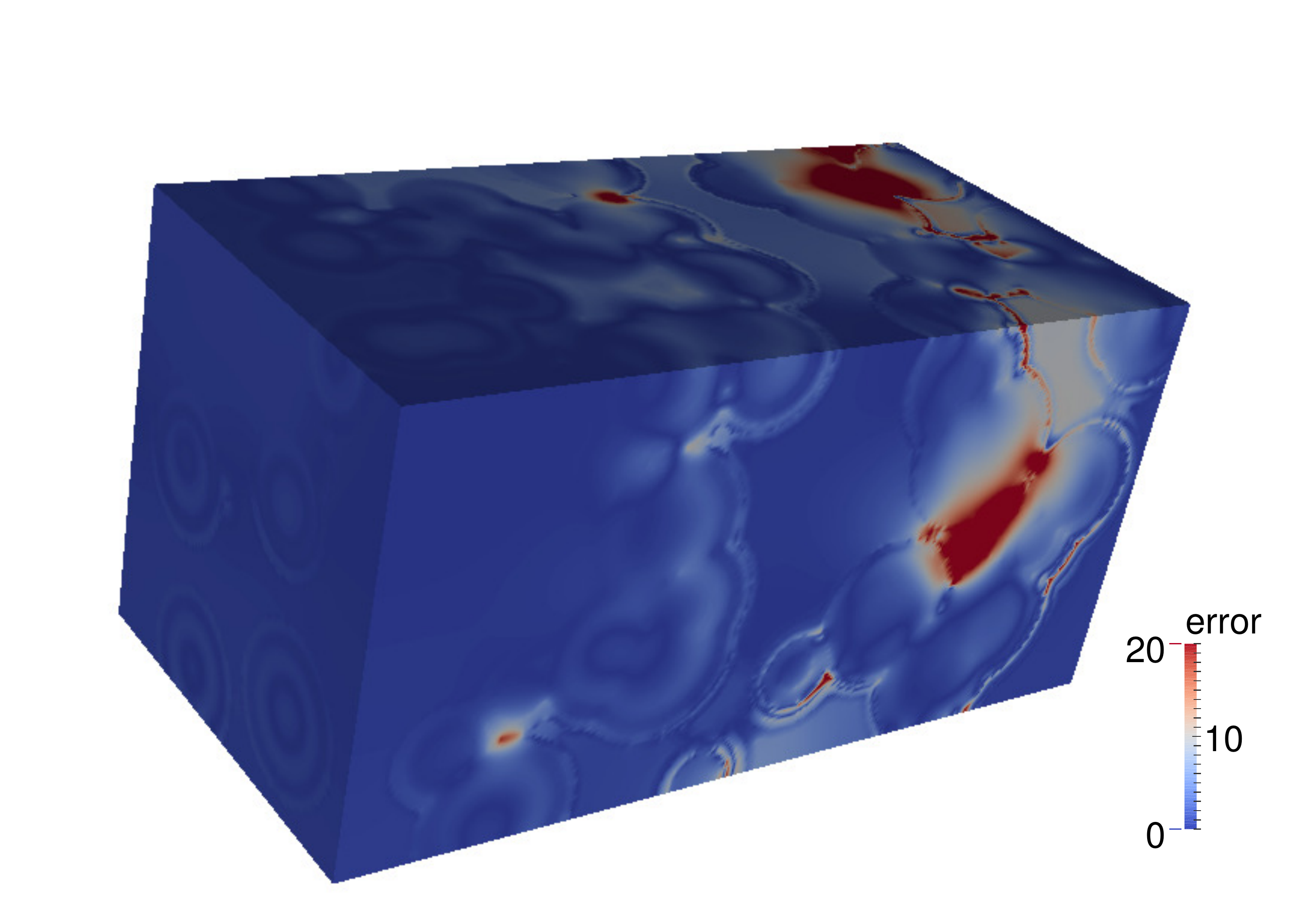}
}\;
\subfloat[Std, $s=1.95$ cm]{
\includegraphics[width=0.45\linewidth]{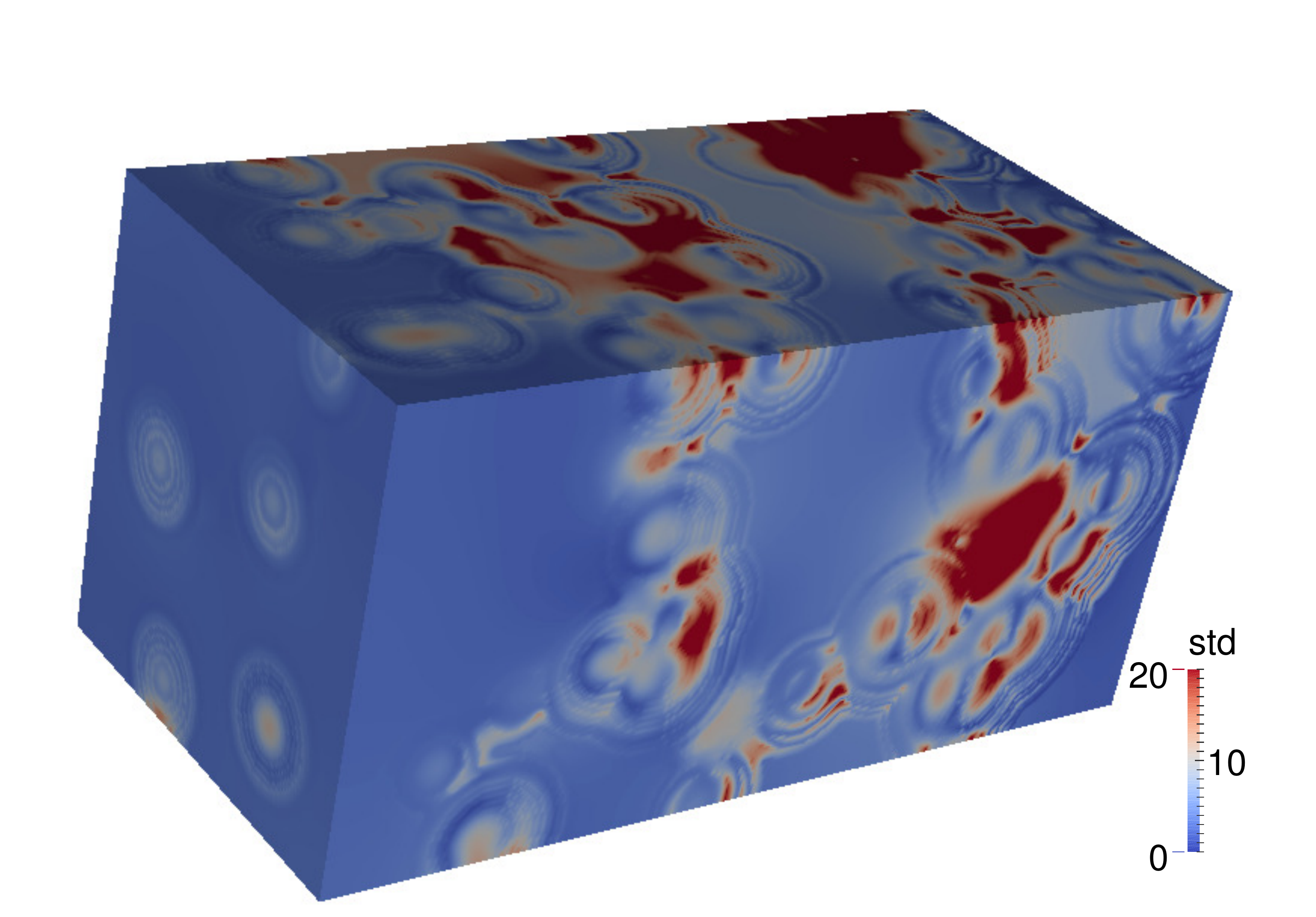}
}
\caption{Absolute error in the reduce order model compared with the prediction
  standard deviation for one realization of the bubble locations at
  the final time for two values of the bubble radius, $s=0.39$ and
  $s=1.95$ cm. (Colors are visible in the electronic version.)}
\label{fig:romvar}
\end{figure} 

\subsection{Comparison with a response surface}
\label{sec:compare}
One question that arises frequently in the context of reduced-order
modeling is, if one is only interested in a scalar quantity of
interest from the full PDE solution, then what is the advantage of
approximating the full solution with a reduced-order model? Why not
just use a scalar response surface to approximate the quantity of
interest as a function of the parameters?  To address this question,
we compare two approaches for the parameter study in Section
\ref{sec:uqstudy}:
\begin{enumerate}
\item Use a response surface to interpolate the means of each of the
  two quantities of interest over a range of bubble radii.  We use the
  quantities of interest at bubble radii $s_j=0.039\,j$ for
  $j=3,7,11,\dots,59$ to decide the form of the response surface:
  piecewise linear, nearest neighbor, cubic spline, or piecewise cubic
  Hermite interpolation (PCHIP).  The response surface form with the
  lowest testing error is constructed from the mean quantities of
  interest for bubble radii $s_j=0.039\,j$ for
  $j=1,5,9,\dots,61$---which are the same values whose simulations are
  used to construct the ROM.  The response surface prediction is then
  computed for $j=1,2,3,\dots,61$.
\item Use the ROM to approximate the temperature field on the far face
  at the final time for each realization of the bubble location. Then
  compute the two quantities of interest for each approximated far
  face temperature distribution, and compute a Monte Carlo
  approximation of the mean (i.e., a simple average).
\end{enumerate}
The results of this study are shown in Figure \ref{fig:rscompare}. For
the first quantity of interest (the average temperature over the far
face), the cubic spline response surface approach adequately captures
the behavior as a function of the bubble radius due to the relative
smoothness of the response. However, the PCHIP response surface
approximation of the second quantity of interest (the proportion of
far face temperature that exceeds $475\,^{\circ}\mathrm{K}$) is
substantially less accurate than the ROM due to the sharp transition
and the low resolution in the parameter space. A global polynomial or
radial basis function approximation would fare worse (e.g., exhibit
Gibbs oscillations near the transition) due to the global nature of
the basis.  We conclude that the choice of ROM versus response surface
depends on the quantity of interest. Broadly speaking, if the quantity
of interest is a smooth function of the parameters, then a response
surface is likely sufficient. However, if the quantity of interest is
a highly nonlinear or discontinuous function of the full PDE solution,
then computing such a quantity from the ROM approximation will yield
better results. While it was not pertinent to this example, the
ROM-based approach also provides the 95\% confidence bounds for the
Monte Carlo estimates, since the temperature distribution for each
realization of the bubble locations is approximated.

\begin{figure}[t]
\centering
\subfloat[]{
\includegraphics[width=0.45\linewidth]{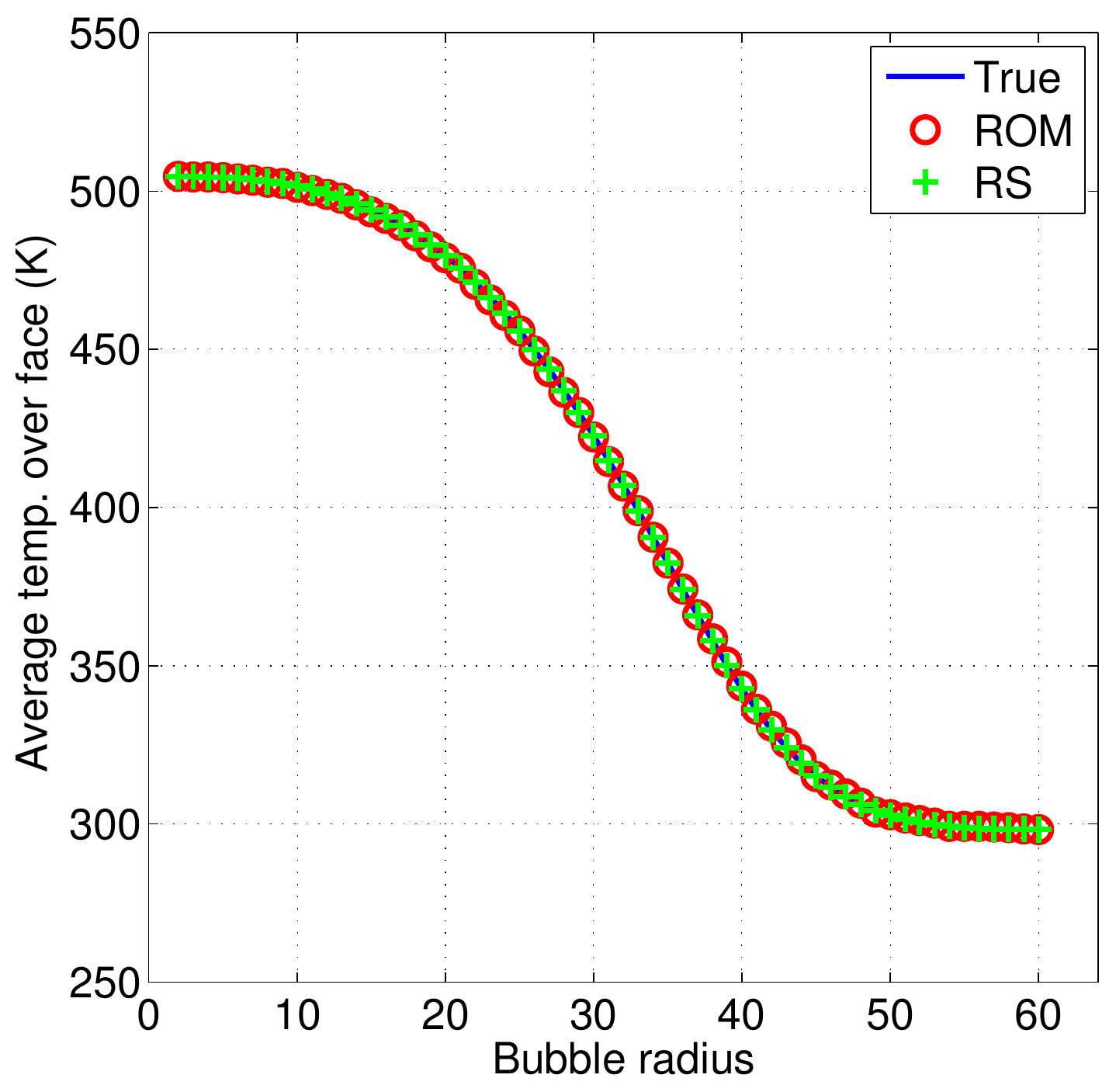}
}\;
\subfloat[]{
\includegraphics[width=0.45\linewidth]{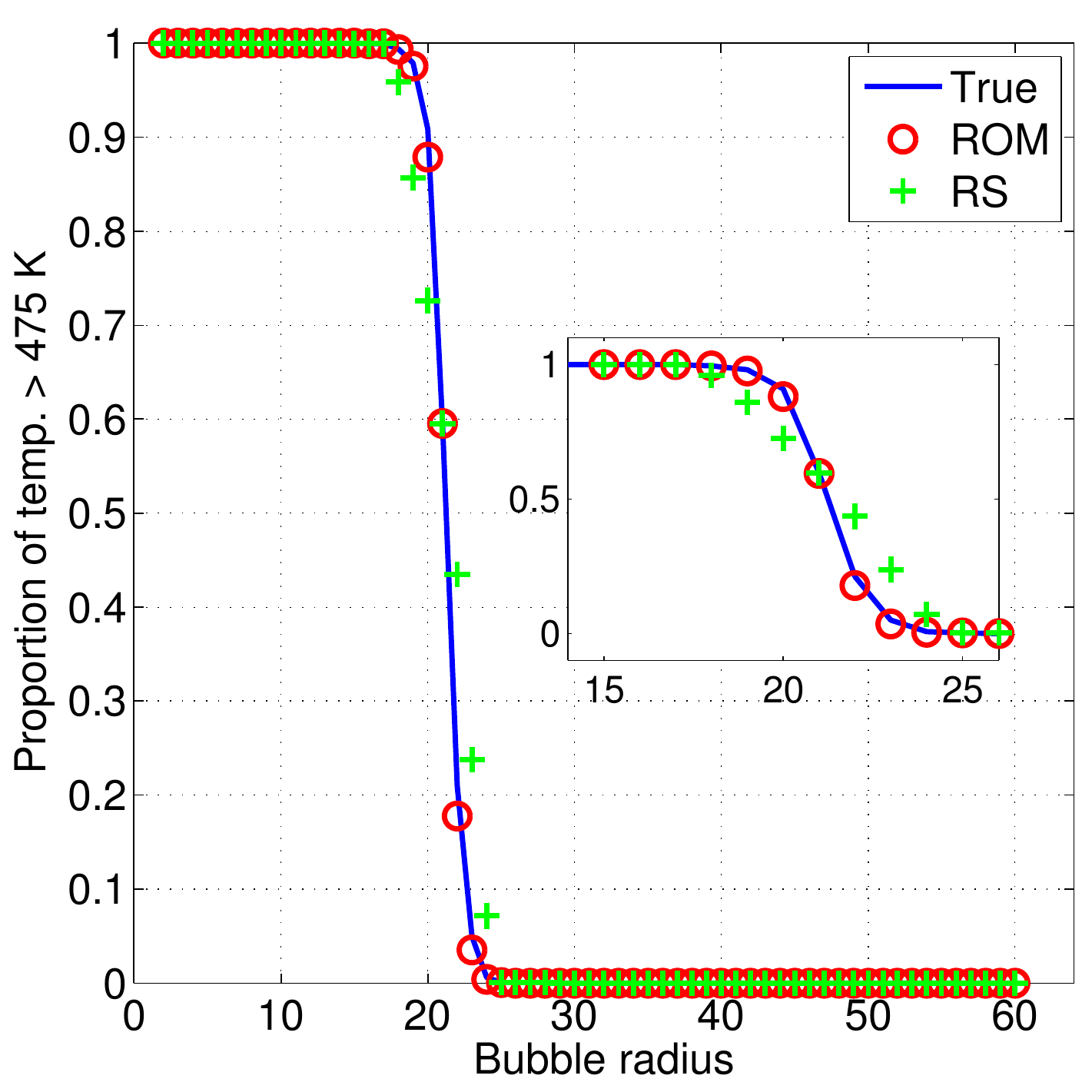}
}
\caption{Comparing ROM with response surface in the UQ study. (Colors are
  visible in the electronic version.) }
\label{fig:rscompare}
\end{figure} 

\subsection{Computational considerations}
We end this section with a few notes on the experience of running 8192
large-scale simulations, transferring them to a Hadoop cluster, and
building the reduced-order model.

Each heat transfer simulation took approximately four hours using
eight processors on Sandia's Red Sky. Communication times were
negligible, but some runs required multiple tries due to occasional
network failures. Also, some runs had to be duplicated due to node
failures and changes in the code paths. Each mesh with its own
conductivity field took approximately twenty minutes to construct
using Cubit after substantial optimizations. 
Unreliable network data transmissions and bursty data write patterns
(e.g., one hundred jobs on Red Sky simultaneously transferring data to
the Stanford cluster) forced us to write custom codes to validate the
data transfer.

Working with the simulation data involved a few pre- and
post-processing steps, such as interpreting 4TB of Exodus II files
from Aria. The preprocessing steps took approximately 8-15 hours. We
collected precise timing information, but we do not report it as these
times are from a multi-tenant, unoptimized Hadoop cluster where other
jobs with sizes ranging between 100GB and 2TB of data sometimes ran
concurrently. Also, during our computations, we observed failures in
hard disk drives and issues causing entire nodes to fail. Given that
the cluster has 40 cores, these calculations consumed at most
2400 cpu-hours---compared with the 262144 cpu-hours it took to
compute 8192 heat transfer simulations on Red Sky. Thus, evaluating
the ROM was about 100 times faster than computing a full simulation.

We did not compare our Hadoop implementation with an MPI
implementation. The dominant bottleneck in the evaluation of the ROM
is the data I/O involved in processing 4TB of simulation data into a
2.3TB matrix, computing its SVD, writing it to disk, computing the
interpolants, and writing those outputs back to Exodus II files. We
expect that any MPI implementation would take at least 3-4 hours based
on pure I/O considerations (assuming around 1GB/sec sustained data
transfer speeds).  It would also be substantially more complicated to
implement. We used Hadoop primarily for the ease of use of its
programming model.


Although MapReduce is an appealing paradigm for
computing factorizations of tall matrices, the Hadoop
MapReduce ecosystem has not developed simple tools for working with
large databases of spatio-temporal data.  For instance, writing ad hoc
utilities to extract data from Exodus II files and utilities for
simply stated queries like, \emph{retrieve all values of temperature
  with $x=-0.1$ and $t=2000$}, occupied much of the second and third
authors' time (see the \texttt{mr-exodus2farface.py} code in the
online code for this particular script).  We see many opportunities
for high-impact software designed to address the mundane details of
manipulating and analyzing large databases of spatio-temporal
simulation data.

\section{Summary \& conclusions}
\label{sec:conclusion}

We presented a method for building a reduced-order model of the
solution of a parametererized partial differential equation. The
method is based on approximating the factors of a singular value
expansion of the solution using the elements of a singular value
decomposition of a matrix whose columns are spatio-temporally
discretized solutions at different parameter values. The SVD step
compares to reduced basis methods, which project the governing
equations with a subset of the left singular vectors to create small
system whose solution yields the coefficients of the reduced-order
model. In contrast, our method interpolates the right singular vectors
in the parameter space to compute the coefficients of the ROM.  By
examining the gradient of the right singular vectors as the index
increases, we determine a separation between factors we can accurately
interpolate and those whose oscillations are too rapid to be
represented on the parameter grid. This separation yields a mean
prediction and a prediction covariance for each point in the
spatio-temporal domain---similar to Gaussian process regression
models.  We use Hadoop/MapReduce to implement the ROM including the
communication-avoiding, tall-and-skinny SVD, which enables the
computation to scale to outputs from large-scale high-fidelity models.

We tested the model reduction method on a parameter study of
large-scale heat transfer in random media. We compared the results of
the ROM with a standard response surface method for approximating the
scalar quantities of interest, and we found that while the cheaper
response surface was appropriate for a smooth quantity of interest,
the ROM was better at approximating a quantity of interest with a
sharp transition in the parameter space. The 8192 heat transfer
simulations used in the study generated approximately 4 TB of data. In
the course of the study, we applied the MapReduce-based SVD
computation to a matrices with approximately 600 GB and 2.2 TB of
data. We found that existing MapReduce tools for working with such
large-scale simulation data lack robustness and generality. There is
an opportunity in computational science to create better tools to
further simulation-based scientific exploration.

\section{Acknowledgments}
We thank the anonymous reviewers for helpful comments and suggestions.
We also thank Margot Gerritsen at Stanford's Institute for
Computational and Mathematical Engineering for procurement of and
access to the Hadoop cluster. We thank Austin Benson at Stanford for
his superb code development for the TSQR and TSSVD. We thank Joe
Ruthruff at Sandia for his efforts developing the infrastructure to
run the Aria cases. Finally, we thank David Rogers at Sandia and
acknowledge the funding of Sandia's Computer Science Applied Research
(CSAR) and the Advanced Simulation and Computing (ASC) programs.  Sandia is a multiprogram laboratory operated by Sandia Corporation, a Lockheed Martin Company, for the United States Department of Energy under contract DE-AC04-94AL85000.

\section{Appendix}
The tables with material properties for foam and steel.

\begin{table}[H]
\centering
\begin{tabular}{c|c}
\multicolumn{2}{c}{Foam ($\rho=319\,\mathrm{kg/m}^{3}$)} \\
\hline
$T$ (K) & $\kappa$ (W/mK) \\
\hline
303 & 0.0486  \\
523 & 0.0706 \\
\end{tabular} \quad
\begin{tabular}{c|c}
\multicolumn{2}{c}{Steel ($\rho=7900\,\mathrm{kg/m}^{3}$)} \\
\hline
$T$ (K) & $\kappa$ (W/mK) \\
\hline
273 & 13.4 \\
373 & 16.3 \\
773 & 21.8 \\
973 & 26.0 \\
\end{tabular}
\caption{Thermal conductivity $\kappa$.  Linear interpolation is used
  between specified values.  Constant values are used outside of the
  bounds.}
\label{t:cond}
\end{table}

\begin{table}[H]
\centering
\begin{tabular}{c|c}
\multicolumn{2}{c}{Foam ($\rho=319\,\mathrm{kg/m}^{3}$)}  \\
\hline
$T$ (K) & $c_p$ (J/kgK) \\
\hline
296 & 1269 \\
323 & 1356 \\
373 & 1497 \\
423 & 1843 \\
473 & 1900 \\
523 & 2203 \\
\end{tabular}\quad
\begin{tabular}{c|c}
\multicolumn{2}{c}{Steel ($\rho=7900\,\mathrm{kg/m}^{3}$)} \\
\hline
$T$ (K) & $c_p$ (J/kgK) \\
\hline
273 & 502 \\
673 & 565 \\
\end{tabular}
\caption{Specific heat capacity $c_p$.  Linear interpolation is used between
  specified values.  Constant values are used outside of the bounds.}
\label{t:cap}
\end{table}

\bibliographystyle{siam}
\bibliography{mrmr}

\end{document}